\documentclass[review,hidelinks,onefignum,onetabnum]{siamart251216}

\usepackage{lipsum}

\usepackage{amsfonts}
\usepackage{amssymb}
\usepackage{amsopn}

\usepackage{bm}

\usepackage{graphicx}
\usepackage{subcaption}
\usepackage{epstopdf}

\usepackage{algorithmic}
\usepackage{algorithm}

\usepackage{booktabs}
\usepackage{multirow}
\usepackage{array}

\usepackage{tabularx}
\usepackage{ragged2e}

\usepackage{xfrac}
\usepackage{textcomp}

\usepackage{tcolorbox}

\usepackage{makecell}

\ifpdf
  \DeclareGraphicsExtensions{.eps,.pdf,.png,.jpg}
\else
  \DeclareGraphicsExtensions{.eps}
\fi

\newsiamremark{remark}{Remark}

\newsiamremark{hypothesis}{Hypothesis}

\crefname{hypothesis}{Hypothesis}{Hypotheses}

\newsiamthm{claim}{Claim}

\newsiamremark{fact}{Fact}

\crefname{fact}{Fact}{Facts}

\newcolumntype{L}
{>{\RaggedRight\arraybackslash}X}

\allowdisplaybreaks 
\usepackage{booktabs}

\usepackage{textcomp}
\usepackage{xfrac}

\headers{An Example Article}{D. Doe, P. T. Frank, and J. E. Smith}

\title{An Example Article\thanks{Submitted to the editors DATE.
\funding{This work was funded by the Fog Research Institute under contract no.~FRI-454.}}}

\author{Dianne Doe\thanks{Imagination Corp., Chicago, IL 
  (\email{ddoe@imag.com}, \url{http://www.imag.com/\string~ddoe/}).}
\and Paul T. Frank\thanks{Department of Applied Mathematics, Fictional University, Boise, ID 
  (\email{ptfrank@fictional.edu}, \email{jesmith@fictional.edu}).}
\and Jane E. Smith\footnotemark[3]}

\headers
{Y.D. Gan and G. Wu}
{Kaczmarz-type Methods For Solving Doubly Noisy Linear Systems}

\title
{On Convergence Behavior of Randomized Kaczmarz-type Methods for Solving Doubly Noisy Linear Systems}

\author{
Yudan Gan
\thanks{
School of Mathematics,
China University of Mining and Technology,
Xuzhou, 221116, Jiangsu, P.R. China.
E-mail: {\tt ganyudan339@163.com}.
}
\and
Gang Wu
\thanks{
Corresponding author.
School of Mathematics,
China University of Mining and Technology,
Xuzhou, 221116, Jiangsu, P.R. China.
E-mail: {\tt gangwu@cumt.edu.cn}.
}
} 

\ifpdf
\hypersetup{
  pdftitle={An Example Article},
  pdfauthor={D. Doe, P. T. Frank, and J. E. Smith}
}
\fi

\begin{document}
\nolinenumbers

\maketitle

\begin{abstract}
In this paper, we investigate the limiting behavior of the RK algorithm for solving doubly noisy inconsistent linear systems without imposing any additional initial assumptions. 
The proposed bounds effectively characterize the convergence behavior of these algorithms when applied to doubly noisy linear systems. Furthermore, to the best of our knowledge, this work provides the first convergence analysis of the randomized extended Kaczmarz (REK), randomized block Kaczmarz (RBK), and randomized double block Kaczmarz (RDBK) algorithms for doubly noisy linear systems. We prove that these algorithms converge to a neighborhood of the least-squares solution of the underlying noiseless system. These results show that RK and RBK, designed for consistent systems, outperform REK and RDBK, designed for inconsistent systems, in both the convergence rate and the convergence horizon. What's more, the analytical skills to the sketch-and-project method completely overcome the limitation when analyzing RK and RBK. Finally, numerical experiments are conducted to validate the theoretical results.
\end{abstract}

\begin{keywords}
Large-scale linear systems, Randomized Kaczmarz method, Doubly-Noisy linear systems, least squares solution.
\end{keywords}

\begin{MSCcodes}
65F15, 65F10, 90C20, 90C26.
\end{MSCcodes}

\section{Introduction}
With the rapid development of artificial intelligence and data science, solving large-scale problems has attracted increasing attention. Among these problems, large-scale linear systems
		\begin{equation}\label{1.2}
		A \bm {x} = \bm {b},\quad A \in \mathbb{R}^{ m\times n},\quad \bm {b} \in \mathbb{R}^{ m}
	\end{equation}
has attracted remarkable attentions, owing to its extensive applications including computed tomography \cite{Natter-2001}, medical image reconstruction \cite{Herman-1993}, signal processing\cite{Xu-2025, Feich-1992}, machine learning \cite{Bjorck-2024}. Significant growth in the scale of data and system has occurred in these fields.  Direct solvers therefore such as LU factorization and Cholesky factorization become impractical due to their excessive memory usage and CPU time. In contrast, iterative methods are more suitable for solving large-scale linear systems \cite{Ma-2015,Bjorck-2015}. Among these, the Kaczmarz method \cite{Kacz-1937} is an efficient iterative projection method for solving large-scale consistent and overdetermined systems of linear equations. This method updates the current iteration by projecting onto only one row of the system \eqref{1.2} in each iteration. Since its memory-efficient, Kaczmarz method has been widely used in practical applications such as image reconstruction, distributed computation, and so on \cite{Freris-2012,Gordon-1970}.

In detail, let $\bm a^\top_{i_k}$ denote the ${i_k}$-th row of $A$ and let $b_{i_k}$ denote the ${i_k}$-th entry of $\bm b$ at the $k$-th iterate. The Kaczmarz method orthogonally projects the current iteration $\bm {x}_{k-1}$ onto the affine hyperplane defined by one row $\bm a^\top_{i_k} \bm{x}=b_{i_k}$ of \eqref{1.2},  where the index set $i_k$ is chosen cyclically from the row indices $\{1,2,\ldots,m\}$. Starting from an initial point $\bm x_0$, the $k$-th iterate has the form
$$\bm x_{k} = \bm x_{k-1} - \frac{ \bm {a}^\top_{i_k} \bm x_{k-1}- b_{i_k}}{\|\bm a_{i_k}\|_2^2} \bm a_{i_k}, \quad k=1,2,\ldots,$$
where \(i_k=((k-1) \bmod m)+1\). However, the convergence rate of this method is not provided in \cite{Kacz-1937}. Instead of selecting rows cyclically, Strohmer and Vershynin \cite{Strohmer-2009} proposed the randomized Kaczmarz algorithm (RK) by randomly selecting rows with probability proportional to the square of its 2-norms. The RK converges exponentially in expectation to the unique solution of consistent and overdetermined linear systems \eqref{1.2}. The efficiency of the RK algorithm has attracted extensive interest of scholars, and so it was further investigated \cite{Bai-2018,Wang-2022,Yuan-2022,Tondji-2021,Tondji-2024,Yin-2025}. However, if all rows of a matrix have equal 2-norms, then the random manner of row selection is no longer suitable. In order to avoid this case, Bai and Wu \cite{Bai-2018-2} proposed the greedy randomized Kaczmarz, which aims to minimize the residual norm at each iterate as much as possible by adopting a greedy probability criterion. There are some studies on the greedy randomized Kaczmarz method \cite{Bai-2018-3,Su-2026,Bai-2026,Zhang-2019}. It is necessary either to access all rows of the coefficient matrix or to compute the residuals at each step. To improve efficiency, Jiang et al. \cite{Wu-2023} introduced a semi-randomized Kaczmarz method with simple random sampling, which only needs to access the rows with the largest relative residuals and does not require the computation of selection probabilities.

Most of the above studies focus on consistent systems of linear equations \eqref{1.2}. Considerable attention has been devoted to inconsistent linear systems \cite{Bai-2021,Liu-2024,Bai-2023,Yin-2024,Yin-2025-2,Wu-2025,Need-2010,Zouzias-2013}. Among these methods, the Randomized Extended Kaczmarz (REK) \cite{Zouzias-2013}, which converges exponentially in mean square to the least squares (LS) solution of inconsistent linear systems \eqref{1.2} by iterating over the rows and the columns of the coefficient matrix $A$. More precisely, set the initial iterates $\bm x_0=\bm 0$ and $\bm z_0=\bm b$. The $k$-th iterate for the REK method \cite{Zouzias-2013} is defined as
\begin{equation}\label{rek}
\begin{cases}
\bm z_{k}=\bm z_{k-1}-\frac{\bm a^\top_{j_k}
	\bm z_{k-1}}{\left\|\bm{a}_{j_k}\right\|_2^2}\bm{a}_{j_k},\\[6pt]
\bm x_{k}=\bm x_{k-1}- \frac{\bm a^\top_{i_k} \bm x_{k-1} - b_{i_k}+( z_{k-1})_{i_k}}{\left\|\bm {a}_{i_k}\right\|_2^2} \bm a_{i_k},
\end{cases}
\end{equation}
where $(z_{k-1})_{i_k}$ is the ${i_k}$-th entry of $\bm z_{k-1}$, $\bm a_{j_k}$ and $\bm a^\top_{i_k}$ denote the $j_k$-th column and $i_k$-th row of $A$, respectively. Moreover, the upper bounds for the convergence of REK are improved in \cite{Du-2019}. The block versions \cite{Need-2014,Need-2015} of RK and REK were first presented to accelerate convergence. First, the row indices $\{1,2,\ldots,m\}$ are divided into $p$ subsets, denoted as a partition $\{\tau_1, \tau_2,\ldots,\tau_p\}$. A block index $\tau_{i_k} \in \{\tau_1, \tau_2,\ldots,\tau_p\}$ is chosen randomly at the $k$-th iterate, and the corresponding row submatrix is denoted by ${A}_{\tau_{i_k}}$. At the $k$-th step, the randomized block Kaczmarz (RBK) method \cite{Need-2014} constructs $\bm x_{k}$ by
\begin{equation}\label{rbk}
\bm x_{k} = \bm x_{k-1} + {A}_{\tau_{i_k}}^\dagger (\bm{b}_{\tau_{i_k}}- A_{\tau_{i_k}} \bm x_{k-1}).
\end{equation}
The property of the submatrix ${A}_{\tau_{i_k}}$ plays an important role in the behavior of the RBK algorithm. For example, the convergence rate of RBK for solving inconsistent systems is related to the row paving $(p, \alpha_1, \alpha_2)$ of the coefficient matrix $A$, defined as a partition $\varphi=\{\tau_1, \tau_2,\ldots,\tau_p\}$ satisfying both
\begin{equation}\label{1.4}
\alpha_1 \leq \lambda_{\min}({A}_{\tau_{i_k}}{A}_{\tau_{i_k}}^\top) \quad \text{and} \quad \lambda_{\max}({A}_{\tau_{i_k}}{A}_{\tau_{i_k}}^\top) \leq \alpha_2 \quad \text{for each}\quad\tau_{i_k} \in T,
\end{equation}
where $\lambda_{\min}$ and $\lambda_{\max}$ denote the algebraic minimum and maximum eigenvalues, respectively. The work in \cite{Need-2015} provides efficient methods to construct good paving for matrices $A$ with different characteristics. When analyzing the behavior of the block algorithms, we assume that such a good paving has been obtained. Combining the REK and RBK algorithms, the randomized double block Kaczmarz method (RDBK) \cite{Need-2015} is developed to solve inconsistent linear systems \eqref{1.2}. Let $\{\nu_1,...,\nu_q\}$ denote a column partition of $\{1,...,n\}$, and let a block index $\nu_{i_k}$ be selected from $\{\nu_1, \nu_2,\ldots,\nu_q\}$. Initialize $\bm{x}_0=\bm 0$ and $\bm{z}_0=\bm b$. The $k$-th iterate of the RDBK method \cite{Need-2015} can be written as
\begin{equation}\label{rdbk}
	\begin{cases}
\bm{z}_k=\bm{z}_{k-1}-A_{\nu_{i_k}}A_{\nu_{i_k}}^\dagger\bm{z}_{k-1},\\[6pt]
\bm{x}_k= \bm{x}_{k-1} + {A}_{\tau_{i_k}}^\dagger (\bm{b}_{\tau_{i_k}}-(\bm{z}_k)_{\tau_{i_k}}-{A}_{\tau_{i_k}} \bm{ x}_{k-1}),
\end{cases}
\end{equation}
in which $(\bm{z}_k)_{\tau_{i_k}}$ is a subvector of $\bm{z}_k$ indexed by $\tau_{i_k}$, ${A}_{\tau_{i_k}}$ is a row submatrix of $A$ indexed by $\tau_{i_k}$, and $A_{\nu_{i_k}}$ is a column submatrix of $A$ indexed by $\nu_{i_k}$. Since then, many variants \cite{Wu-2025-2,Nec-2019,Du-2019-2,Du-2020} of the block Kaczmarz algorithm have been developed to accelerate convergence for solving systems \eqref{1.2}.

Gower and Richt{\'a}rik \cite{Gower2015} developed a unified randomized iterative framework for solving consistent linear systems, which admits six equivalent formulations. Among them, the sketch-and-project method recovers several randomized iterative methods as special cases, including the RK, the RBK, coordinate descent, and stochastic Newton methods. The explicit update for the sketch-and-project method \cite{Gower2015} is
\begin{equation}\label{SP}
\begin{aligned}
    \bm{x}_{k+1}
    =
    \bm{x}_{k}
    -
    B^{-1}A^{\top}S_k
    \left(S_k^{\top}AB^{-1}A^{\top}S_k\right)^{\dagger}
    S_k^{\top}\left(A\bm{x}_{k}-\bm{b}\right),
\end{aligned}
\end{equation}
where the random sketch matrices $S_k \in \mathbb{R}^{ m\times q}$ are drawn in an independent and identically distributed (i.i.d.) fashion at each iteration, and the metric matrix $B \in \mathbb{R}^{n \times n}$ is symmetric positive definite. This matrix is used to define the $B$-inner product and the induced $B$-norm by
\begin{equation}\label{Bnorm}
\langle x,y\rangle_{B}=\langle Bx,y\rangle,
\qquad
\bigl\|x\bigr\|_B=\sqrt{\langle x,x\rangle_{B}},
\end{equation}
where $\langle \cdot, \cdot\rangle$ is the standard Euclidean inner product.
Richt{\'a}rik and Tak{\'a}{\v{c}}~\cite{Rich2020} subsequently developed an efficient and flexible framework for reformulating an arbitrary consistent linear system as a stochastic problem. Patel et al.~\cite{Patel2021} further clarified the connection between randomized sketching and randomized Kaczmarz methods by introducing an implicit representation of the sketched system. Gower et al.~\cite{Gower2021} developed adaptive sampling rules for sketch-and-project methods for solving linear systems and established global linear convergence guarantees. Morijiri et al.~\cite{Mori2018} extended the error analysis of the sketch-and-project framework from consistent linear systems to inconsistent linear systems.

However, noise is inevitable in real-world data. Instead of the noiseless system \eqref{1.2}, we are interested in applying the Kaczmarz method to solve doubly noisy linear systems
\begin{equation}\label{1.1}
		\widetilde{A} \bm{x} =\bm{\tilde{b}},
\end{equation}
where $\widetilde{A}$ and $\bm{\tilde{b}}$ are the noisy versions of $A$ and $\bm b$, respectively. In this paper, the doubly noisy linear system \eqref{1.1} and the underlying noiseless system \eqref{1.2} are not required to be consistent. Without the consistency condition, it is important to quantify how far the iterative solution is from the LS solution $\bm {x}_{LS}$ of \eqref{1.2} when Kaczmarz algorithm is applied to \eqref{1.1}. To the best of our knowledge, the analysis concerning the convergence of the Kaczmarz algorithm on the doubly noisy linear system \eqref{1.1} is limited. When only $\bm{\tilde{b}}=\bm b+\bm \epsilon$ is corrupted by the additive noise $\bm \epsilon$, referred to as a single noisy linear system
\begin{equation}\label{1.3}
A \bm x = \bm{\tilde{b}},
\end{equation}
Needell \cite{Need-2010} proved that the RK algorithm exponentially converges in expectation to a ball centered at $\bm {x}_{LS}$ under the assumption that $A$ has full column rank. The radius of this ball is referred to as the convergence horizon \cite{Need-2014}. Soon after, Zouzias and Freris \cite{Zouzias-2013} dropped the full column rank assumption and further improved the convergence horizon for solving \eqref{1.3}. Bergou et al. \cite{Bergou-2024} first analyzed the RK algorithm for the doubly noisy linear system \eqref{1.1}. The iterate $\bm {{\tilde x}}_{k}$ of RK for solving \eqref{1.1} will converge to a ball centered at $\bm {x}_{LS}$ under the assumption that $\bm {\tilde x}_{0}- \bm {x}_{LS} \in \mathcal{R}(\widetilde{A}^\top)$, and the convergence horizon depends on the noise level of both $\widetilde{A}$ and $\bm{\tilde{b}}$ \cite{Bergou-2024}. Wu et al. present the perturbation analysis of sampling semi-randomized Kaczmarz (SSRK) method for solving \eqref{1.1} \cite{Wu-n-2026}. Recently, Bergou et al. \cite{Bergou-2025} removed the initial assumption and use $ (\bm{\tilde x}_0)_{\mathcal{N}(\widetilde{A})}+(\bm{x}_*)_{\mathcal{R}(\widetilde{A}^\top)}$ as a limiting reference point, where $\bm {x}_* \in \mathbb{R}^{n}$ is arbitrary. However, we consider how far the Kaczmarz iteration $\bm {{\tilde x}}_{k}$ for the doubly noisy linear system \eqref{1.1} is from the LS solution $\bm {x}_{LS}$ of the noiseless linear system \eqref{1.2} in real-world applications. Thus, we take $\bm {x}_{LS}$ as the limiting reference point, which differs from the setting in \cite{Bergou-2025}. Meanwhile, we eliminate the initial assumption concerning $\bm {\tilde x}_{0}- \bm {x}_{LS} \in \mathcal{R}(\widetilde{A}^\top)$ from \cite{Bergou-2024}, and investigate the convergence behavior of Kaczmarz-type iterates $\bm {{\tilde x}}_{k}$ on the doubly noisy linear system without initial assumptions. Note that the convergence discussed in this paper does not refer to the conventional convergence to zero. Instead, the iterates $\bm {{\tilde x}}_{k}$ converge to a ball centered at $\bm {x}_{LS}$.

\subsection{Contributions and organization}

The notation used throughout this paper is summarized in Table~\ref{tab:notations}. The remainder of the paper is organized as follows. Section~\ref{sec2} reviews related work on Kaczmarz methods for solving the noisy linear systems \eqref{1.1} and \eqref{1.3}. In Section~\ref{sec4}, we directly analyze RK applied to \eqref{1.1} and investigate the behavior of
$\mathbb{E}\!\left[\left\|\widetilde{\bm{x}}_k-\bm{x}_{\mathrm{LS}}\right\|_2^2\right]$. Section~\ref{sec3} analyzes the upper bound for
$\left\|\mathbb{E}\!\left[ \widetilde{\bm{x}}_k-\bm{x}_{\mathrm{LS}}\right]\right\|_2$ when RK, REK, RBK, and RDBK are applied to the doubly noisy linear system \eqref{1.1}.
Section~\ref{sec6} studies both
$\left\|\mathbb{E}\!\left[\widetilde{\bm{x}}_k-\bm{x}_{\mathrm{LS}}\right]\right\|_B$ and $\mathbb{E}\!\left[\left\|\widetilde{\bm{x}}_k-\bm{x}_{\mathrm{LS}}\right\|_B^2\right]$ under the more general sketch-and-project framework for doubly noisy linear systems. Finally, Section~\ref{sec5} presents numerical experiments that support our theoretical results.

Our main contributions are summarized as follows:
\begin{itemize}

\item
We remove the initialization assumption $\widetilde{\bm{x}}_0-\bm{x}_{\mathrm{LS}}\in\mathcal{R}(\widetilde{A}^{\top})$, which was imposed in \cite{Bergou-2024}, and analyze $\mathbb{E}\!\left[\left\|\widetilde{\bm{x}}_k-\bm{x}_{\mathrm{LS}}\right\|_2^2\right]$ for the RK method applied to the doubly noisy linear system \eqref{1.1}. The corresponding result is presented in Theorem~\ref{thm3.5}. However, the convergence rate is obtained by Theorem~\ref{thm3.5} may be slow. To address this limitation, in
Theorem~\ref{th51}, we further establish an upper bound of $\mathbb{E}\!\left[\left\|\widetilde{\bm{x}}_k-\bm{x}_{\mathrm{LS}}\right\|_B^2\right]$ for the sketch-and-project method applied to \eqref{1.1}. This result overcomes the limitation of Theorem~\ref{thm3.5} and provides a more general result without imposing any restriction on the initialization. In addition, Theorem~\ref{thm3.1} analyzes the behavior of $\left\|\mathbb{E}\!\left[\widetilde{\bm{x}}_k-\bm{x}_{\mathrm{LS}}\right]\right\|_2$. Numerical experiments demonstrate the improvement provided by the theoretical bound in Theorem~\ref{thm3.1}.

\item
To the best of our knowledge, this work provides the first convergence analysis of block Kaczmarz methods, including RBK and RDBK, for solving \eqref{1.1}. The corresponding results are
presented in Theorems~\ref{thm3.3} and~\ref{thm3.4}. Although these results require a full row rank assumption, this condition is mild when $m\gg n$. Moreover, by applying the technique developed in Theorem~\ref{th52}, the assumption imposed in Theorems~\ref{thm3.3} and~\ref{thm3.4} can also be removed. In other words, Theorems~\ref{thm3.3}, \ref{thm3.4}, and~\ref{th52} characterize, from different perspectives, the factors affecting the upper bound on $\left\|\mathbb{E}\!\left[\widetilde{\bm{x}}_k-\bm{x}_{\mathrm{LS}}\right]\right\|_2$.

\item
Finally, the convergence result for REK is presented in Theorem~\ref{thm3.2}. The results in Section~\ref{sec3} indicate that, for doubly noisy linear systems, RK and RBK, which are designed for consistent systems, outperform REK and RDBK, which are designed for inconsistent systems, in terms of both the convergence rate and the
the convergence horizon. Detailed discussions are provided in Remarks~\ref{rema3.1} and~\ref{rema3.2}. Moreover, numerical experiments validate our theoretical results.

\end{itemize}

In the remaining part of this section, let's introduce some preliminary notations. 
Let $\mathbb{E}_{k-1}[\cdot]$ denote the expectation conditional on the first $k-1$ iterations of the Kaczmarz-type algorithms. Then we have
\begin{align*}
	\mathbb{E}_{k-1}[\cdot]=\mathbb{E}[\cdot |j_1, i_1, j_2, i_2,\ldots,j_{k-1}, i_{k-1}],
\end{align*}
where $i_m$ is the $m$-th row chosen and $j_m$ is the $m$-th column chosen. We further define the expectation conditional on both the first $k-1$ iterations and the column selected at the $k$-th iteration by
$$
\mathbb{E}_{k-1}^i[\cdot]=\mathbb{E}[\cdot |j_1, i_1, j_2, i_2,\ldots,j_{k-1}, i_{k-1}, j_{k}].
$$
By the law of total expectation, it follows that
$$
\mathbb{E}_{k-1}[\cdot]=\mathbb{E}_{k-1}[\mathbb{E}_{k-1}^i[\cdot]].
$$


\begin{table}[htbp]
	\centering
	\caption{Some notations used in this paper}
	\label{tab:notations}
	\begin{tabular}{ll}
		\toprule
		Notations & Description \\
		\midrule
		$M^\top, M^H$ & Transpose and conjugate transpose of a matrix $M$ \\
		$M^\dagger$ & Moore-Penrose inverse of a matrix $M$ \\
		$\sigma_{\min}(M)$ & the smallest nonzero singular value of a matrix $M$ \\
		$\sigma_{\max}(M)$ & the largest singular value of a matrix $M$ \\
		$\bm{x}^\top$ & Transpose of a vector $\bm{x}$\\
		$\bm{\tilde{a}_{i_k}}^\top, \bm{\bar{a}_{j_k}}$ & the $i$th row and $j$th column of $\widetilde{A}$ at the $k$th iterate\\
		$A\succeq B~(A\succ B)$ & $A-B$ is semi-positive definite (positive definite)\\
		$\lambda_{\min}(M),\lambda_{\max}(M)$ & the smallest and largest eigenvalue of $M$\\
		$I$ & n-by-n identity matrix \\
			$\{\tau_1, \tau_2,\ldots,\tau_p\}$  & partitionings of the row indices $\{1,2,\ldots,m\}$ \\
		$\{\nu_1, \nu_2,\ldots,\nu_q\}$ & partitionings of the column indices $\{1,2,\ldots,n\}$\\
	$\widetilde{A}_{\tau_{i_k}}\in \mathbb{R}^{|\tau_{i_k}| \times n}$ & the row submatrix of $\widetilde{A}$ indexed by \scalebox{0.9}{$\tau_{i_k} \in \{\tau_1, \tau_2,\ldots,\tau_p\}$}\\
	$\widetilde{A}_{\nu_{i_k}}\in \mathbb{R}^{m \times |\nu_{i_k}|}$ & the column submatrix of $\widetilde{A}$ indexed by \scalebox{0.9}{$\nu_{i_k}\in \{\nu_1, \nu_2,\ldots,\nu_q\}$}\\
	$\bm {\tilde r}$ & the residual $\widetilde{A} \bm {x}- \bm {\widetilde b}$\\
		$\operatorname{rank}(M), \operatorname{tr}(M)$ & Rank and trace of a matrix $M$ \\
	$\mathcal{N}(\widetilde{A}), \mathcal{R}(\widetilde{A})$ & Null space and range of $\widetilde{A}$ \\
		$\|\cdot\|_2, \|\cdot\|_F$ & 2-norm of a vector and Frobenius norm of a matrix \\
		$\|\cdot\|_2^2, \|\cdot\|_F^2$ & the square of the 2-norm and the Frobenius norm \\
	$(\bm {x})_{\mathcal{N}(\widetilde{A})}, (\bm{x})_{\mathcal{R}(\widetilde{A})}$ & the orthogonal projection of $\bm {x}$ onto $\mathcal{N}(\widetilde{A})$ and  $\mathcal{R}(\widetilde{A})$ \\
	$\bm {x}_{LS}, \bm {\tilde x}_{LS}, \bm {\tilde x}^{(i_k)}_{LS}$ & the LS solutions of \eqref{1.2}, \eqref{1.1} and subsystem \scalebox{0.9}{$\widetilde{A}_{\tau_{i_k}} \bm x=\bm{\tilde{b}}_{\tau_{i_k}}$} \\
    RK & Selecting a row \scalebox{0.9}{$\bm {\tilde a}^\top_{i_k}$} from $\widetilde{A}$ with probability \scalebox{0.9}{$\left\| \bm{\tilde{a}}_{i_{k}}\right\|_2^2 / \| \tilde{A} \|_F^2$}\\

    RBK & Selecting a row submatrix \scalebox{0.8}{$\widetilde{A}_{\tau_{i_k}}$} with probability \scalebox{0.8}{$\|\widetilde{A}_{\tau_{i_k}}\big\|_F^2/\big\|\tilde{A}\big\|_F^2$}\\
    REK & \makecell[l]{Selecting a row \scalebox{0.9}{$\bm {\tilde a}^\top_{i_k}$} or a column \scalebox{0.9}{$\bm{\bar{a}_{j_k}}$} from $\widetilde{A}$\\ 
   with probability \scalebox{0.9}{$\left\| \bm{\tilde{a}}_{i_{k}}\right\|_2^2 / \| \tilde{A} \|_F^2$} or \scalebox{0.9}{$\left\| \bm{\bar{a}}_{j_{k}}\right\|_2^2 / \| \tilde{A} \|_F^2$} }\\
    RDBK & \makecell[l]{Selecting a row submatrix \scalebox{0.8}{$\widetilde{A}_{\tau_{i_k}}$} or a column submatrix \scalebox{0.8}{$\bar{A}_{\nu_{i_k}}$}\\ 
    from $\widetilde{A}$ with probability \scalebox{0.8}{$\|\widetilde{A}_{\tau_{i_k}}\big\|_F^2/\big\|\tilde{A}\big\|_F^2$} or \scalebox{0.8}{$\|\bar{A}_{\nu_{i_k}}\big\|_F^2/\big\|\tilde{A}\big\|_F^2$}}\\
    
	\bottomrule
	\end{tabular}
\end{table}


\section{Summary of Randomized Kaczmarz for Noisy Linear Systems}\label{sec2}
In this section, we briefly review existing studies on the randomized Kaczmarz method for noisy linear systems. The following theorem concerns a single noisy linear system \eqref{1.3} with the right-hand side corrupted by noise $\bm \epsilon$.

\begin{theorem}\cite[Theorem 2.1]{Need-2010}\label{thm2.1}
Assume that $A$ has full column rank and the noiseless system $A \bm x=\bm b$ is consistent with $\bm {x}_{LS}=A^{\dagger}\bm b$. Let $\bm {\tilde x}_{k}$ be the k-th iterate of the randomized Kaczmarz method applied to the single noisy linear system $A \bm x = \bm{\tilde{b}}$ with $\bm{\tilde{b}}=\bm b+\bm \epsilon$, where $\bm \epsilon=(\epsilon_1,\epsilon_2,\ldots,\epsilon_m)^\top$ and $A=(\bm a_1^\top,\bm a_2^\top,\ldots,\bm a_m^\top)^\top$. Then we have
$$
\mathbb{E}\|\bm {\tilde x}_{k}-\bm {x}_{LS}\|_2^2\leq\left(1-\frac{1}{\mu}\right)^{k} \|\bm {\tilde x}_0-\bm {x}_{LS}\|_2^2+\mu\gamma^2,
$$
where $\mu=\|A^{\dagger}\|_2^2\|A\|_F^2$ and $\gamma= \max\limits_{i \in \{1,2,\ldots,m\}}\left\{\frac{|\epsilon_i|}{\|\bm {a_i}\|_2}\right\}$.
\end{theorem}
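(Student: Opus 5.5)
The plan is to reduce the noisy iteration to a comparison between two projections: the exact Kaczmarz step toward the noiseless hyperplane, plus a small correction caused by the noise. Write $\bm e_k=\bm{\tilde x}_k-\bm x_{LS}$. Since $A\bm x=\bm b$ is consistent and $A$ has full column rank, $\bm x_{LS}$ is the unique solution, so $\bm a_i^\top\bm x_{LS}=b_i$ for every $i$.

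With the row $i=i_k$ sampled with probability $\|\bm a_i\|_2^2/\|A\|_F^2$, one RK step on $A\bm x=\bm{\tilde b}$ reads
$$\bm{\tilde x}_k=\bm{\tilde x}_{k-1}-\frac{\bm a_i^\top\bm{\tilde x}_{k-1}-b_i-\epsilon_i}{\|\bm a_i\|_2^2}\bm a_i .$$
Subtracting $\bm x_{LS}$ and using $b_i=\bm a_i^\top\bm x_{LS}$ gives
$$\bm e_k=\Bigl(I-\frac{\bm a_i\bm a_i^\top}{\|\bm a_i\|_2^2}\Bigr)\bm e_{k-1}+\frac{\epsilon_i}{\|\bm a_i\|_2^2}\bm a_i .$$
The first term $P_i\bm e_{k-1}$ lies in $\bm a_i^\perp$, while the second is parallel to $\bm a_i$. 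The two are therefore orthogonal, and Pythagoras gives the exact identity
$$\|\bm e_k\|_2^2=\|P_i\bm e_{k-1}\|_2^2+\frac{\epsilon_i^2}{\|\bm a_i\|_2^2}\le \|P_i\bm e_{k-1}\|_2^2+\gamma^2 .$$
This orthogonal splitting is the key step. It avoids any cross term, so no Young-type inequality, and no loss of constants, is needed.

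Next I take the conditional expectation $\mathbb{E}_{k-1}$. The noiseless RK computation (Strohmer--Vershynin) gives
$$\mathbb{E}_{k-1}\|P_i\bm e_{k-1}\|_2^2=\|\bm e_{k-1}\|_2^2-\frac{\|A\bm e_{k-1}\|_2^2}{\|A\|_F^2}.$$
Full column rank gives $\|A\bm e\|_2^2\ge\sigma_{\min}^2(A)\|\bm e\|_2^2=\|\bm e\|_2^2/\|A^\dagger\|_2^2$. Hence
$$\mathbb{E}_{k-1}\|\bm e_k\|_2^2\le\Bigl(1-\frac1\mu\Bigr)\|\bm e_{k-1}\|_2^2+\gamma^2 .$$

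Finally I take full expectations and unroll the recursion:
$$\mathbb{E}\|\bm e_k\|_2^2\le\Bigl(1-\frac1\mu\Bigr)^k\|\bm e_0\|_2^2+\gamma^2\sum_{j=0}^{k-1}\Bigl(1-\frac1\mu\Bigr)^j .$$
The geometric sum is bounded by $\mu$, which yields the stated bound.

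The only point needing care is verifying that $\mu\ge1$, so that the contraction factor lies in $[0,1)$. This follows from $\|A\|_F^2\ge\sigma_{\max}^2(A)\ge\sigma_{\min}^2(A)$. I do not expect a genuine obstacle anywhere in the argument.
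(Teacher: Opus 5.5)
Your proof is correct. It is Needell's original argument: the Pythagorean identity $\|\bm e_k\|_2^2=\|P_i\bm e_{k-1}\|_2^2+\epsilon_i^2/\|\bm a_i\|_2^2$, the Strohmer--Vershynin conditional expectation, the bound $\|A\bm e\|_2^2\ge\|\bm e\|_2^2/\|A^\dagger\|_2^2$, and geometric unrolling. The paper only cites this result and does not reprove it, but its proof of Theorem~\ref{thm3.5}(i) uses the same orthogonal splitting. There, with $\widetilde A=A$, the noise term averages to $\|\bm\epsilon\|_2^2/\|A\|_F^2\le\gamma^2$.
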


\begin{remark}
{\it  Theorem \ref{thm2.1} shows that the convergence rate $1-1/\mu$ of the RK method applied to the single noisy linear system \eqref{1.3} is the same as that of the RK method applied directly to the system \eqref{1.2}, while the convergence horizon $\mu\gamma^2$ depends on the noise level and the condition of the coefficient matrix $A$.}
\end{remark}

In \cite{Zouzias-2013}, Zouzias and Freris further remove the full-rank assumption and improve the results of Theorem \ref{thm2.1}.	
\begin{theorem}\cite{Zouzias-2013}\label{thm2.2}
Let $\bm {\tilde x}_{k}$ be the $k$-th iterate of the RK method applied to the single noisy linear system $A \bm x = \bm{\tilde{b}}$ with $\bm{\tilde{b}}=\bm b+\bm \epsilon$ for any fixed $\bm \epsilon \in \mathbb{R}^{m}$. Assume that the system $A \bm x=\bm b$ has a solution $\bm {x}_{LS}=A^{\dagger}\bm b$ for some $\bm b \in \mathbb{R}^{m}$. If $\bm {\tilde x}_0\in \mathcal{R}(A^{\top})$, then we have
	$$
\mathbb{E}\|\bm {\tilde x}_k-\bm {x}_{LS}\|_2^2\leq\left(1-\frac{1}{\mu}\right)^{k} \|\bm {\tilde x}_0-\bm {x}_{LS}\|_2^2+\frac{\|\bm \epsilon\|_2^2}{\sigma_{\min}^2(A)},
	$$
	where $\mu=\|A^{\dagger}\|_2^2\|A\|_F^2$.
\end{theorem}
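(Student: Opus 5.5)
The plan is to compare the noisy iterate with a noiseless ``shadow'' iterate and split the error orthogonally. Let $\bm x_k$ be the RK iterate for the consistent system $A\bm x=\bm b$, using the same index sequence $i_1,i_2,\ldots$ and started from $\bm x_0=\bm{\tilde x}_0$. A single step of RK on the noisy system can be written as
$$
\bm{\tilde x}_k-\bm x_{LS}=\Bigl(I-\frac{\bm a_{i_k}\bm a_{i_k}^\top}{\|\bm a_{i_k}\|_2^2}\Bigr)(\bm{\tilde x}_{k-1}-\bm x_{LS})+\frac{\epsilon_{i_k}}{\|\bm a_{i_k}\|_2^2}\bm a_{i_k},
$$
because $\bm a_{i_k}^\top\bm x_{LS}=b_{i_k}$ by consistency. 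Write $P_k=I-\bm a_{i_k}\bm a_{i_k}^\top/\|\bm a_{i_k}\|_2^2$, which is an orthogonal projector. Both terms on the right lie in $\mathcal{R}(A^\top)$ provided $\bm{\tilde x}_{k-1}-\bm x_{LS}\in\mathcal{R}(A^\top)$. Since $\bm{\tilde x}_0\in\mathcal{R}(A^\top)$ and $\bm x_{LS}=A^\dagger\bm b\in\mathcal{R}(A^\top)$, induction gives $\bm e_k:=\bm{\tilde x}_k-\bm x_{LS}\in\mathcal{R}(A^\top)$ for all $k$. This invariance is what replaces the full-column-rank assumption of Theorem~\ref{thm2.1}.

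The key observation is that $\frac{\epsilon_{i_k}}{\|\bm a_{i_k}\|_2^2}\bm a_{i_k}$ is parallel to $\bm a_{i_k}$, while $P_k\bm e_{k-1}$ is orthogonal to $\bm a_{i_k}$. Pythagoras therefore gives, with no cross term,
$$
\|\bm e_k\|_2^2=\|P_k\bm e_{k-1}\|_2^2+\frac{\epsilon_{i_k}^2}{\|\bm a_{i_k}\|_2^2}.
$$
Next take the conditional expectation $\mathbb{E}_{k-1}$ with sampling probabilities $\|\bm a_i\|_2^2/\|A\|_F^2$. The first term becomes $\|\bm e_{k-1}\|_2^2-\bm e_{k-1}^\top A^\top A\bm e_{k-1}/\|A\|_F^2$. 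Since $\bm e_{k-1}\in\mathcal{R}(A^\top)$, we have $\bm e_{k-1}^\top A^\top A\bm e_{k-1}\ge\sigma_{\min}^2(A)\|\bm e_{k-1}\|_2^2$, so this is at most $(1-1/\mu)\|\bm e_{k-1}\|_2^2$, where $\mu=\|A^\dagger\|_2^2\|A\|_F^2=\|A\|_F^2/\sigma_{\min}^2(A)$. The noise term has expectation exactly $\sum_i\epsilon_i^2/\|A\|_F^2=\|\bm\epsilon\|_2^2/\|A\|_F^2$. Together these give the recursion
$$
\mathbb{E}_{k-1}\|\bm e_k\|_2^2\le\Bigl(1-\frac1\mu\Bigr)\|\bm e_{k-1}\|_2^2+\frac{\|\bm\epsilon\|_2^2}{\|A\|_F^2}.
$$

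Finally, take the full expectation and unroll the recursion. The noise contributions form a geometric series bounded by
$$
\frac{\|\bm\epsilon\|_2^2}{\|A\|_F^2}\sum_{j\ge0}\Bigl(1-\frac1\mu\Bigr)^j=\mu\frac{\|\bm\epsilon\|_2^2}{\|A\|_F^2}=\frac{\|\bm\epsilon\|_2^2}{\sigma_{\min}^2(A)},
$$
which yields the claimed bound. I expect the main obstacle to be the range-invariance step. It is the only place where the hypothesis $\bm{\tilde x}_0\in\mathcal{R}(A^\top)$ enters, and it is what legitimizes using the smallest nonzero singular value. It also requires noting that $A^\dagger\bm b$ is the minimum-norm solution, so that $\bm x_{LS}$ lies in $\mathcal{R}(A^\top)$. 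The orthogonality identity is the other delicate point: using it rather than a triangle inequality is what produces the sharp horizon, without the factor-two loss or the $\gamma$-type maximum that appears in Theorem~\ref{thm2.1}.
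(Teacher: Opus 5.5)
Your argument is correct. The paper only cites this result from \cite{Zouzias-2013}, but your steps are exactly those of its proof of Theorem~\ref{thm3.5}{\tt (i)} specialized to $E=0$ with $A\bm x_{LS}=\bm b$: the Pythagorean split, the conditional expectation $\bm e_{k-1}^\top(I-A^\top A/\|A\|_F^2)\bm e_{k-1}+\|\bm\epsilon\|_2^2/\|A\|_F^2$, and the $\sigma_{\min}$ bound on $\mathcal{R}(A^\top)$; your induction showing $\bm e_k\in\mathcal{R}(A^\top)$ for all $k$ supplies the hypothesis that Theorem~\ref{thm3.5}{\tt (i)} assumes at each step. The ``shadow'' noiseless iterate you introduce is never used and can be dropped, and if $A$ has zero rows the noise term's expectation is only $\le\|\bm\epsilon\|_2^2/\|A\|_F^2$ rather than equal to it, which does not affect the bound.
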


In real-world applications, not only the right-hand side $\bm b$ but the coefficient matrix $A$ are corrupted by noise. The following theorem shows the convergence of the RK method applied to the doubly noisy linear system \eqref{1.1}. Unlike Theorems \ref{thm2.1} and \ref{thm2.2}, it indicates that the convergence rate of the RK method applied to the doubly noisy linear system \eqref{1.1} is independent of the noiseless coefficient matrix, and depends only on a scaled condition number of the noisy coefficient matrix $\widetilde{A}$, while the convergence horizon depends on the perturbations $E$ and $\bm \epsilon$.
\begin{theorem}\cite{Bergou-2024}\label{thm2.3}
Let $\bm {{\tilde x}}_{k}$ be the $k$-th iterate of the RK method applied to the doubly noisy linear system $\widetilde{A} \bm {x} = \bm{\tilde{b}}$, with $\bm {\tilde x}_0- \bm {x}_{LS} \in \mathcal{R}(\widetilde{A}^{\top})$. Assume that the noiseless system $A \bm x=\bm b$ is consistent with $\bm {x}_{LS}=A^{\dagger}\bm b$. Then we have
	$$
	\mathbb{E}\|\bm {{\tilde x}}_{k}-\bm {x}_{LS}\|_2^2\leq\left(1-\frac{1}{\widetilde{\mu}}\right)^{k} \|\bm {\tilde x}_0-\bm {x}_{LS}\|_2^2+\frac{\|E\bm {x}_{LS}-\bm \epsilon\|_2^2}{\sigma^2_{\min}(\widetilde A)},
	$$
	where $\widetilde \mu=\|\widetilde{A}^{\dagger}\|_2^2\|\widetilde{A}\|_F^2$, $E=\widetilde{A}-A$, and $\bm \epsilon=\bm{\tilde{b}}-\bm b$.
\end{theorem}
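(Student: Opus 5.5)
The plan is to compare the RK iterate with the consistent "reference" system satisfied by $\bm x_{LS}$. Since $A\bm x_{LS}=\bm b$ by assumption, we have
\[
\widetilde A\bm x_{LS}=\bm{\tilde b}-\bm r,\qquad \bm r:=\bm\epsilon-E\bm x_{LS},
\]
so $\bm x_{LS}$ solves the noisy matrix system with a perturbed right-hand side. The $k$-th RK step on $\widetilde A\bm x=\bm{\tilde b}$ with row $i=i_k$ can then be rewritten in terms of the error $\bm e_k=\bm{\tilde x}_k-\bm x_{LS}$ as
\[
\bm e_k=\Bigl(I-\frac{\bm{\tilde a}_i\bm{\tilde a}_i^\top}{\|\bm{\tilde a}_i\|_2^2}\Bigr)\bm e_{k-1}+\frac{r_i}{\|\bm{\tilde a}_i\|_2^2}\bm{\tilde a}_i .
\]
This is exactly the setting of Theorem~\ref{thm2.2}, with coefficient matrix $\widetilde A$ and right-hand-side noise $\bm r$. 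I would therefore redo that argument rather than invoke it verbatim, because the reference point here is $\bm x_{LS}$ and not $\widetilde A^\dagger(\bm{\tilde b}-\bm r)$.

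The first step is to observe that the two terms above are orthogonal. The first is the projection of $\bm e_{k-1}$ onto $\bm{\tilde a}_i^\perp$, and the second is parallel to $\bm{\tilde a}_i$. Hence
\[
\|\bm e_k\|_2^2=\|P_i\bm e_{k-1}\|_2^2+\frac{r_i^2}{\|\bm{\tilde a}_i\|_2^2}.
\]
Taking the conditional expectation with probabilities $\|\bm{\tilde a}_i\|_2^2/\|\widetilde A\|_F^2$ gives
\[
\mathbb E_{k-1}\|\bm e_k\|_2^2=\|\bm e_{k-1}\|_2^2-\frac{\|\widetilde A\bm e_{k-1}\|_2^2}{\|\widetilde A\|_F^2}+\frac{\|\bm r\|_2^2}{\|\widetilde A\|_F^2}.
\]

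The second step uses the hypothesis $\bm e_0\in\mathcal R(\widetilde A^\top)$. Each update adds a multiple of some $\bm{\tilde a}_i\in\mathcal R(\widetilde A^\top)$, so by induction $\bm e_k\in\mathcal R(\widetilde A^\top)$ for all $k$. On that subspace $\|\widetilde A\bm e\|_2^2\ge\sigma_{\min}^2(\widetilde A)\|\bm e\|_2^2$, which yields the recursion
\[
\mathbb E\|\bm e_k\|_2^2\le\Bigl(1-\frac1{\widetilde\mu}\Bigr)\mathbb E\|\bm e_{k-1}\|_2^2+\frac{\|\bm r\|_2^2}{\|\widetilde A\|_F^2}.
\]

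The third step unrolls this recursion and sums the geometric series:
\[
\sum_{j\ge0}\Bigl(1-\frac1{\widetilde\mu}\Bigr)^j\frac{\|\bm r\|_2^2}{\|\widetilde A\|_F^2}=\widetilde\mu\,\frac{\|\bm r\|_2^2}{\|\widetilde A\|_F^2}=\frac{\|E\bm x_{LS}-\bm\epsilon\|_2^2}{\sigma_{\min}^2(\widetilde A)}.
\]
This is exactly the stated horizon.

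The main obstacle is the invariance step: keeping $\bm e_k$ in $\mathcal R(\widetilde A^\top)$ is what allows the $\sigma_{\min}$ lower bound, and it relies entirely on the initial assumption. The orthogonality identity also needs care, because the noise term enters only through the component $r_i$ along $\bm{\tilde a}_i$, so no cross term appears. Both points are routine, and the proof should go through directly.
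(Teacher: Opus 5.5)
Your proof is correct. The paper only cites this result from Bergou et al., so the right comparison is its proof of Theorem~\ref{thm3.5}(i), which uses exactly your argument: the orthogonal split of the error update, the conditional expectation producing $\widetilde{A}^\top\widetilde{A}/\|\widetilde{A}\|_F^2$, and the $\sigma_{\min}(\widetilde A)$ bound on $\mathcal{R}(\widetilde{A}^\top)$, with horizon term $\|\widetilde{A}\bm{x}_{LS}-\bm{\tilde b}\|_2^2=\|E\bm{x}_{LS}-\bm\epsilon\|_2^2$ under consistency. Your explicit induction showing $\bm e_k\in\mathcal{R}(\widetilde{A}^\top)$ for all $k$, and the geometric-series unrolling, supply the two steps the paper leaves implicit there, since it simply assumes $\bm e_{k-1}\in\mathcal{R}(\widetilde{A}^\top)$ at each step.
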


Without imposing any initial assumptions, Bergou {\it et al.} proved that RK applied to \eqref{1.1} converges to a ball centered at \scalebox{0.9}{$(\bm {\tilde x}_0)_{\mathcal{N}(\widetilde{A})}+(\bm{x}_*)_{\mathcal{R}(\widetilde{A}^\top)}$}, where $\bm {x}_* \in \mathbb{R}^{n}$ is arbitrary.

\begin{theorem}\cite{Bergou-2025}\label{thm2.4}
Let $\bm {\tilde x}_{k}$ be the $k$-th iterate of the RK algorithm applied to the doubly noisy linear system $\widetilde{A} \bm {x} = \bm{\tilde{b}}$, where $\widetilde{A}$ and $\bm{\tilde{b}}$ are fixed. Let $\bm {\tilde x}_0$ be arbitrary. Then we have, for any $\bm {x}_* \in \mathbb{R}^{n}$,
\[
\thickmuskip=1mu \medmuskip=0.8mu 
\begin{aligned}
	\mathbb{E}\left\|\bm {\tilde x}_{k}-(\bm {\tilde x}_0)_{\mathcal{N}(\widetilde{A})}-(\bm{x}_*)_{\mathcal {R}(\widetilde{A}^\top)}\right\|_2^2\leq\left(1-\frac{1}{\widetilde{\mu}}\right)^{k} \left\|(\bm{\tilde x}_0)_{\mathcal{R}(\widetilde{A}^\top)}-(\bm{x}_*)_{\mathcal {R}(\widetilde{A}^\top)}\right\|_2^2+\frac{\|\widetilde{A} \bm {x}_*-\bm{\tilde{b}}\|_2^2}{{\sigma}^2_{\min}(\widetilde A)},
\end{aligned}
\]
where $\widetilde{\mu}=\|\widetilde{A}^{\dagger}\|_2^2\|\widetilde{A}\|_F^2$.	
\end{theorem}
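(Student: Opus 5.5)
Set $\bm y_* := (\bm{\tilde x}_0)_{\mathcal{N}(\widetilde{A})}+(\bm{x}_*)_{\mathcal{R}(\widetilde{A}^\top)}$ and write $\bm e_k := \bm{\tilde x}_k-\bm y_*$. The plan is to reduce Theorem \ref{thm2.4} to a one-step contraction in $\mathcal{R}(\widetilde{A}^\top)$ plus a bounded noise term.

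\textbf{Step 1: the error stays in $\mathcal{R}(\widetilde{A}^\top)$.} Each RK update adds a multiple of a row $\bm{\tilde a}_{i_k}\in\mathcal{R}(\widetilde{A}^\top)$. Hence $(\bm{\tilde x}_k)_{\mathcal{N}(\widetilde{A})}=(\bm{\tilde x}_0)_{\mathcal{N}(\widetilde{A})}$ for every $k$, and so $\bm e_k\in\mathcal{R}(\widetilde{A}^\top)$. In particular $\bm e_0=(\bm{\tilde x}_0)_{\mathcal{R}(\widetilde{A}^\top)}-(\bm{x}_*)_{\mathcal{R}(\widetilde{A}^\top)}$, which is exactly the initial term in the statement. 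Since $\widetilde{A}$ annihilates $\mathcal N(\widetilde A)$, we also have $\widetilde{A}\bm y_*=\widetilde{A}\bm x_*$.

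\textbf{Step 2: decompose one step.} Let $P_i=\bm{\tilde a}_i\bm{\tilde a}_i^\top/\|\bm{\tilde a}_i\|_2^2$ and $\bm r=\widetilde{A}\bm x_*-\bm{\tilde b}$. Then
\[
\bm e_k=(I-P_{i_k})\bm e_{k-1}-\frac{r_{i_k}}{\|\bm{\tilde a}_{i_k}\|_2^2}\bm{\tilde a}_{i_k}.
\]
The first term is orthogonal to $\bm{\tilde a}_{i_k}$ and the second is parallel to it, so by Pythagoras
\[
\|\bm e_k\|_2^2=\|(I-P_{i_k})\bm e_{k-1}\|_2^2+\frac{r_{i_k}^2}{\|\bm{\tilde a}_{i_k}\|_2^2}.
\]

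\textbf{Step 3: conditional expectation.} Rows are sampled with probability $\|\bm{\tilde a}_i\|_2^2/\|\widetilde A\|_F^2$. This gives
\[
\mathbb E_{k-1}\|\bm e_k\|_2^2=\|\bm e_{k-1}\|_2^2-\frac{\|\widetilde A\bm e_{k-1}\|_2^2}{\|\widetilde A\|_F^2}+\frac{\|\bm r\|_2^2}{\|\widetilde A\|_F^2}.
\]
Because $\bm e_{k-1}\in\mathcal R(\widetilde A^\top)$, we have $\|\widetilde A\bm e_{k-1}\|_2^2\ge\sigma_{\min}^2(\widetilde A)\|\bm e_{k-1}\|_2^2$. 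The recursion becomes
\[
\mathbb E\|\bm e_k\|_2^2\le\Bigl(1-\tfrac1{\widetilde\mu}\Bigr)\mathbb E\|\bm e_{k-1}\|_2^2+\frac{\|\bm r\|_2^2}{\|\widetilde A\|_F^2}.
\]

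\textbf{Step 4: unroll.} Iterating and bounding the geometric sum by $\sum_{j\ge0}(1-1/\widetilde\mu)^j=\widetilde\mu$ gives the horizon
\[
\widetilde\mu\,\frac{\|\bm r\|_2^2}{\|\widetilde A\|_F^2}=\frac{\|\widetilde A\bm x_*-\bm{\tilde b}\|_2^2}{\sigma_{\min}^2(\widetilde A)}.
\]

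The only delicate point is Step 1, the invariance of $\bm e_k$ in $\mathcal R(\widetilde A^\top)$, which the lower bound in Step 3 needs. It holds with no assumption on $\bm{\tilde x}_0$ because $\bm y_*$ is built to carry the null-space component of $\bm{\tilde x}_0$.
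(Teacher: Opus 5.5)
Your proof is correct. The paper only quotes this result from \cite{Bergou-2025}, but your Steps 2--4 are the same computation it carries out in the proof of Theorem~\ref{thm3.5}: the Pythagorean one-step identity, the conditional expectation $\|\bm e_{k-1}\|_2^2-\|\widetilde{A}\bm e_{k-1}\|_2^2/\|\widetilde{A}\|_F^2+\|\bm r\|_2^2/\|\widetilde{A}\|_F^2$, and then the $\sigma_{\min}(\widetilde A)$ lower bound on $\mathcal{R}(\widetilde{A}^\top)$. Your reference point $(\bm{\tilde x}_0)_{\mathcal{N}(\widetilde{A})}+(\bm{x}_*)_{\mathcal{R}(\widetilde{A}^\top)}$ keeps $\bm e_k\in\mathcal{R}(\widetilde{A}^\top)$ deterministically for all $k$, so case {\tt (i)} of that argument applies at every step and no $\xi_{k-1}$ factor from case {\tt (iii)} arises.
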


Moreover, the convergence of the mean of the RK iterates is investigated in \cite{Bergou-2025}. As the following theorem indicates, the bound yields a faster convergence rate than that given in Theorem \ref{thm2.4}.
\begin{theorem}\cite{Bergou-2025}\label{thm2.5}
Let $\bm {\tilde x}_{k}$ be the $k$-th iterate of the RK method applied to the doubly noisy linear system $\widetilde{A} \bm {x} = \bm{\tilde{b}}$. Let $\bm {\tilde x}_0$ be arbitrary. Then we have, for any $\bm {x}_* \in \mathbb{R}^{n}$,
\[
\thickmuskip=1mu \medmuskip=0.8mu 
\begin{aligned}
	\left\|\mathbb{E}[\bm {\tilde x}_k]-(\bm {\tilde x}_0)_{\mathcal {N}(\widetilde{A})}-(\bm{x}_*)_{\mathcal {R}(\widetilde{A}^\top)}\right\|_2\leq\left(1-\frac{1}{\widetilde{\mu}}\right)^{k} \left\|(\bm{\tilde x}_0)_{\mathcal{R}(\widetilde{A}^\top)}-(\bm{x}_*)_{\mathcal {R}(\widetilde{A}^\top)}\right\|_2+\frac{\|\widetilde{A} \bm {x}_*-\bm{\tilde{b}}\|_2}{{\sigma}_{\min}(\widetilde A)},
\end{aligned}
\]
	where $\widetilde{\mu}=\|\widetilde{A}^{\dagger}\|_2^2\|\widetilde{A}\|_F^2$.	
\end{theorem}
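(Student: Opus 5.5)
The plan is to analyze the mean iterate directly: taking conditional expectations of the RK update collapses the randomness into a deterministic affine recursion. Write $\widetilde{\bm x}_k=\widetilde{\bm x}_{k-1}-\frac{\widetilde{\bm a}_{i_k}^\top\widetilde{\bm x}_{k-1}-\tilde b_{i_k}}{\|\widetilde{\bm a}_{i_k}\|_2^2}\widetilde{\bm a}_{i_k}$. With row $i$ sampled with probability $\|\widetilde{\bm a}_i\|_2^2/\|\widetilde A\|_F^2$, we get
\[
\mathbb{E}_{k-1}[\widetilde{\bm x}_k]=\widetilde{\bm x}_{k-1}-\frac{1}{\|\widetilde A\|_F^2}\widetilde A^\top(\widetilde A\widetilde{\bm x}_{k-1}-\bm{\tilde b}).
\]
Taking full expectations and setting $\bm y_k=\mathbb{E}[\widetilde{\bm x}_k]$ gives $\bm y_k=(I-P)\bm y_{k-1}+\frac{1}{\|\widetilde A\|_F^2}\widetilde A^\top\bm{\tilde b}$, where $P=\widetilde A^\top\widetilde A/\|\widetilde A\|_F^2$.

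Next, introduce the reference point $\bm w=(\widetilde{\bm x}_0)_{\mathcal N(\widetilde A)}+(\bm x_*)_{\mathcal R(\widetilde A^\top)}$. Since $\widetilde A\bm w=\widetilde A\bm x_*$, subtracting gives
\[
\bm y_k-\bm w=(I-P)(\bm y_{k-1}-\bm w)-\frac{1}{\|\widetilde A\|_F^2}\widetilde A^\top(\widetilde A\bm x_*-\bm{\tilde b}).
\]
The null-space components are handled first. Every increment $\bm y_k-\bm y_{k-1}$ lies in $\mathcal R(\widetilde A^\top)$, so $(\bm y_k)_{\mathcal N(\widetilde A)}=(\widetilde{\bm x}_0)_{\mathcal N(\widetilde A)}$ for all $k$. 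Hence $\bm e_k:=\bm y_k-\bm w\in\mathcal R(\widetilde A^\top)$ for every $k$, and $\bm e_0=(\widetilde{\bm x}_0)_{\mathcal R(\widetilde A^\top)}-(\bm x_*)_{\mathcal R(\widetilde A^\top)}$.

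On $\mathcal R(\widetilde A^\top)$, the eigenvalues of $I-P$ lie in $[0,\,1-\sigma_{\min}^2(\widetilde A)/\|\widetilde A\|_F^2]=[0,\,1-1/\widetilde\mu]$. Unrolling the recursion gives
\[
\bm e_k=(I-P)^k\bm e_0-\sum_{j=0}^{k-1}(I-P)^j\frac{\widetilde A^\top\bm r}{\|\widetilde A\|_F^2},\qquad \bm r=\widetilde A\bm x_*-\bm{\tilde b}.
\]
The first term is bounded by $(1-1/\widetilde\mu)^k\|\bm e_0\|_2$.

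The main obstacle is bounding the accumulated noise term without losing a factor $\widetilde\mu$. The naive bound is $\sum_j(1-1/\widetilde\mu)^j\,\sigma_{\max}\|\bm r\|_2/\|\widetilde A\|_F^2\le\widetilde\mu\,\sigma_{\max}\|\bm r\|_2/\|\widetilde A\|_F^2$, which is too crude. Instead, use the SVD $\widetilde A=U\Sigma V^\top$. The sum equals $V\,\mathrm{diag}\bigl(\sum_j(1-\sigma_i^2/\|\widetilde A\|_F^2)^j\,\sigma_i/\|\widetilde A\|_F^2\bigr)U^\top\bm r$. Each diagonal entry is at most $\frac{\|\widetilde A\|_F^2}{\sigma_i^2}\cdot\frac{\sigma_i}{\|\widetilde A\|_F^2}=1/\sigma_i\le1/\sigma_{\min}(\widetilde A)$; equivalently, the sum equals $(I-(I-P)^k)\widetilde A^\dagger\bm r$ with $0\preceq I-(I-P)^k\preceq I$. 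This bounds the noise term by $\|\bm r\|_2/\sigma_{\min}(\widetilde A)$, and the triangle inequality gives the claimed estimate in Theorem \ref{thm2.5}.
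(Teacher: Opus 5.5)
Your proof is correct and complete. The paper only quotes this result from \cite{Bergou-2025} without proof, but your argument is the same mean-recursion technique the paper uses for Theorem~\ref{thm3.1} (conditional expectation, unrolling, range/null splitting): your noise term $(I-(I-P)^k)\widetilde A^\dagger\bm r$ equals $[I-(I-P)^k](\bm x_*-\tilde{\bm x}_{LS})_{\mathcal R(\widetilde A^\top)}$, which is the analogue of the paper's $[I-T^k](\bm x_{LS}-\tilde{\bm x}_{LS})$ term with $T=I-P$, and you bound it by $\|\widetilde A^\dagger\bm r\|_2\le\|\bm r\|_2/\sigma_{\min}(\widetilde A)$ rather than keeping the factor $\beta$ as the paper does.
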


\begin{remark}
{\it Two remarks are in order. On the one hand, it is required that $\bm {\tilde x}_0- \bm {x}_{LS} \in \mathcal{R}(\widetilde{A}^{\top})$ in Theorem \ref{thm2.3}, however, $\bm {x}_{LS}$ is unknown {\it a prior}. Moreover, the assumption will not be satisfied in general if $\widetilde{A}$ is a flat matrix with $m<n$.
On the other hand, although Theorem \ref{thm2.4} and Theorem \ref{thm2.5} do not make any assumptions on the initial guess, their bounds are NOT for the distance $\|\bm {{\tilde x}}_{k}-\bm {x}_{LS}\|_2$ between the approximation $\bm {\tilde x}_{k}$ for the doubly noisy linear system \eqref{1.1} and the LS solution $\bm {x}_{LS}$ of the noiseless linear system \eqref{1.2}, which is the quantity of interest in real-world applications. Thus, it is necessary to establish new results of the convergence behavior of randomized Kaczmarz-type methods for the doubly noisy linear system \eqref{1.1}.
}
\end{remark}

\section{A revisit to the expectation of the error norm for the RK solving the doubly noisy linear system}\label{sec4}
In this section, we revisit the convergence behavior of the RK method applied to the doubly noisy linear system \eqref{1.1}. Note that the following theorem does not require that the noiseless linear system \eqref{1.2} is consistent. 



\begin{theorem}\label{thm3.5}
	Let $\bm {\tilde x}_{k}$ be the $k$-th iterate of the RK method applied to the doubly noisy linear system \eqref{1.1}. Let $\bm {x}_{LS}=A^{\dagger}\bm b$ and $\bm {\tilde x}_{LS}=\widetilde{A}^{\dagger}\bm {\tilde b}$ denote the least-squares solutions of the noiseless linear system \eqref{1.2} and the doubly noisy linear system \eqref{1.1}, respectively. Let \(\bm{e}_{k}=\bm{\tilde x}_{k}-\bm{x}_{LS}\), then 
	
	{\tt (i)} If $\bm{e}_{k-1} \in \mathcal{R}(\widetilde{A}^{\top})$, then
	$$\mathbb{E}\|\bm{e}_{k}\|_2^2 \leq \left(1-\frac{{\sigma}^2_{\min}(\widetilde A)}{\|\widetilde{A}\|_F^2} \right)\mathbb{E} \| \bm{e}_{k-1}\|_2^2+\frac{\|\widetilde{A}\bm{x}_{LS} - \bm {\tilde b} \|_2^2}{\|\widetilde{A}\|^2_F}.$$
	
	{\tt (ii)} 	If  $\bm{e}_{k-1} \in \mathcal{N}(\widetilde{A})$,  then
	$$
    \mathbb{E}\|\bm{e}_{k}\|_2^2=\mathbb{E}\|\bm{e}_{k-1}\|_2^2+\frac{\|\widetilde{A}\bm{x}_{LS} - \bm {\tilde b} \|_2^2}{\|\widetilde{A}\|^2_F} 
    $$
	when the index set $i_k$ is randomly selected such that $\bm{\tilde{a}}^\top_{i_k} \bm{x}_{LS} - \tilde{b}_{i_k}\neq 0$,  and $\bm{\tilde x}_{k}=\bm{\tilde x}_{k-1}$ if $\bm{\tilde{a}}^\top_{i_k} \bm{x}_{LS} - \tilde{b}_{i_k}=0$.
	
	{\tt (iii)} If $\bm{e}_{k-1} \notin \mathcal{R}(\widetilde{A}^{\top})$ and $\bm{e}_{k-1} \notin \mathcal{N}(\widetilde{A})$, then
	
	$$\mathbb{E}\|\bm{e_{k}}\|_2^2\leq \left(1-\frac{{\sigma}^2_{\min}(\widetilde A)}{\|\widetilde{A}\|_F^2}\cdot \xi_{k-1}\right)\mathbb{E}\| \bm {e_{k-1}}\|_2^2+\frac{\|\widetilde{A}\bm{x}_{LS} - \bm {\tilde b} \|_2^2}{\|\widetilde{A}\|^2_F},$$
	where $\xi_{k-1}=\frac{\mathbb{E}\left\|(\bm{\tilde x}_{k-1} - \bm{x}_{LS})_{\mathcal{R}(\widetilde{A}^\top)}\right\|_2^2}{\mathbb{E}\left\|(\bm{\tilde x}_{k-1} - \bm{x}_{LS})\right\|_2^2}<1$.

\end{theorem}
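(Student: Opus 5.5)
The approach is a one-step recursion for the RK update written in terms of the error, split into the part lying in the row space of $\widetilde{A}$ and the part lying in its null space. The RK update on \eqref{1.1} reads $\bm{\tilde x}_k=\bm{\tilde x}_{k-1}-\frac{\bm{\tilde a}_{i_k}^\top\bm{\tilde x}_{k-1}-\tilde b_{i_k}}{\|\bm{\tilde a}_{i_k}\|_2^2}\bm{\tilde a}_{i_k}$. Subtracting $\bm x_{LS}$ and writing $\bm{\tilde a}_{i_k}^\top\bm{\tilde x}_{k-1}-\tilde b_{i_k}=\bm{\tilde a}_{i_k}^\top\bm e_{k-1}+r_{i_k}$, where $\bm r:=\widetilde A\bm x_{LS}-\bm{\tilde b}$, gives
$$\bm e_k=\Big(I-\tfrac{\bm{\tilde a}_{i_k}\bm{\tilde a}_{i_k}^\top}{\|\bm{\tilde a}_{i_k}\|_2^2}\Big)\bm e_{k-1}-\tfrac{r_{i_k}}{\|\bm{\tilde a}_{i_k}\|_2^2}\bm{\tilde a}_{i_k}.$$
The first term is the orthogonal projection $P_{i_k}\bm e_{k-1}$ onto $\bm{\tilde a}_{i_k}^\perp$, and the second term is parallel to $\bm{\tilde a}_{i_k}$, so the two are orthogonal. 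Pythagoras then gives the exact identity
$$\|\bm e_k\|_2^2=\|\bm e_{k-1}\|_2^2-\frac{(\bm{\tilde a}_{i_k}^\top\bm e_{k-1})^2}{\|\bm{\tilde a}_{i_k}\|_2^2}+\frac{r_{i_k}^2}{\|\bm{\tilde a}_{i_k}\|_2^2}.$$
Taking $\mathbb{E}_{k-1}$ with probabilities $\|\bm{\tilde a}_i\|_2^2/\|\widetilde A\|_F^2$ yields
$$\mathbb{E}_{k-1}\|\bm e_k\|_2^2=\|\bm e_{k-1}\|_2^2-\frac{\|\widetilde A\bm e_{k-1}\|_2^2}{\|\widetilde A\|_F^2}+\frac{\|\bm r\|_2^2}{\|\widetilde A\|_F^2}.$$
All three cases follow from this single identity.

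Next, decompose $\bm e_{k-1}=(\bm e_{k-1})_{\mathcal R(\widetilde A^\top)}+(\bm e_{k-1})_{\mathcal N(\widetilde A)}$. Then $\widetilde A\bm e_{k-1}=\widetilde A(\bm e_{k-1})_{\mathcal R(\widetilde A^\top)}$, and so $\|\widetilde A\bm e_{k-1}\|_2^2\ge\sigma_{\min}^2(\widetilde A)\|(\bm e_{k-1})_{\mathcal R(\widetilde A^\top)}\|_2^2$, using that $\sigma_{\min}$ is the smallest nonzero singular value.

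\textbf{Case (i).} The projection equals $\bm e_{k-1}$ itself. Taking full expectation gives the stated contraction.

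\textbf{Case (ii).} We have $\widetilde A\bm e_{k-1}=0$, so the middle term vanishes and equality holds. I will also note that, pathwise, $\bm{\tilde a}_{i_k}^\top\bm e_{k-1}=0$. Hence if $r_{i_k}=0$ the update step is zero and $\bm{\tilde x}_k=\bm{\tilde x}_{k-1}$; otherwise the iterate moves by $-r_{i_k}\bm{\tilde a}_{i_k}/\|\bm{\tilde a}_{i_k}\|_2^2$. This gives the dichotomy described in the statement.

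\textbf{Case (iii).} Take full expectation of the lower bound:
$$\mathbb{E}\|\widetilde A\bm e_{k-1}\|_2^2\ge\sigma_{\min}^2(\widetilde A)\,\mathbb{E}\|(\bm e_{k-1})_{\mathcal R(\widetilde A^\top)}\|_2^2=\sigma_{\min}^2(\widetilde A)\,\xi_{k-1}\,\mathbb{E}\|\bm e_{k-1}\|_2^2,$$
which is the definition of $\xi_{k-1}$ and gives the bound.

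The main obstacle is interpretive rather than computational. The conditions in (i)–(iii) are on a random vector, so I will read them as holding almost surely over the first $k-1$ steps, or equivalently apply the conditional identity and then average. The claim $\xi_{k-1}<1$ requires $\mathbb{E}\|(\bm e_{k-1})_{\mathcal N(\widetilde A)}\|_2^2>0$. This follows from $\bm e_{k-1}\notin\mathcal R(\widetilde A^\top)$ when the condition holds almost surely. It also helps that the null-space component is invariant under RK updates, since every update direction $\bm{\tilde a}_{i_k}$ lies in $\mathcal R(\widetilde A^\top)$. Consequently $(\bm e_{k-1})_{\mathcal N(\widetilde A)}=(\bm{\tilde x}_0-\bm x_{LS})_{\mathcal N(\widetilde A)}$ is deterministic, which makes $\xi_{k-1}<1$ clean to verify. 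Finally, $\xi_{k-1}>0$ requires the row-space part not to vanish in expectation, which is guaranteed by excluding case (ii).
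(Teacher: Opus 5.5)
Your proposal is correct and follows the paper's proof essentially step for step. It uses the same error recursion and the same Pythagorean identity, takes the conditional expectation to get $\mathbb{E}_{k-1}\|\bm e_k\|_2^2=\|\bm e_{k-1}\|_2^2-\|\widetilde A\bm e_{k-1}\|_2^2/\|\widetilde A\|_F^2+\|\widetilde A\bm x_{LS}-\bm{\tilde b}\|_2^2/\|\widetilde A\|_F^2$, and then applies the same row-space/null-space split with $\sigma_{\min}(\widetilde A)$ in the three cases. Your extra observation is not in the paper: since every RK step moves along some $\bm{\tilde a}_{i_k}\in\mathcal R(\widetilde A^\top)$, the component $(\bm e_{k-1})_{\mathcal N(\widetilde A)}=(\bm{\tilde x}_0-\bm x_{LS})_{\mathcal N(\widetilde A)}$ is deterministic, which makes the case condition $\bm e_{k-1}\notin\mathcal R(\widetilde A^\top)$ deterministic and cleanly justifies $\xi_{k-1}<1$, something the paper asserts without proof.
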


\begin{proof}
	Applying the RK method to \eqref{1.1}, we get
	\[
	\begin{aligned}
		\bm{\tilde x}_k - \bm{x}_{LS}
		&= \left( I - \frac{\bm{\tilde{a}}_{i_k} \bm{\tilde a}^\top_{i_k}}{\|\bm{\tilde a}_{i_k}\|_2^2} \right) (\bm{\tilde x}_{k-1} - \bm{x}_{LS}) -\frac{(\bm{\tilde a}^\top_{i_k} \bm{x}_{LS} - \tilde{b}_{i_k})}{\|\bm{\tilde a}_{i_k}\|_2^2} \cdot \bm{\tilde a}_{i_k}.
	\end{aligned}
	\]
	Let \(\bm{e}_{k} = \bm{\tilde x}_k - \bm{x}_{LS}\), then 
	\begin{align}\label{eqthm3.5.1}
		\|\bm{e}_{k}\|_2^2&= \left\|\left( I - \frac{\bm{\tilde{a}}_{i_k} \bm{\tilde a}^\top_{i_k}}{\|\bm{\tilde a}_{i_k}\|_2^2} \right) \bm{e}_{k-1} - \frac{(\bm{\tilde a}^\top_{i_k} \bm{x}_{LS} - \tilde{b}_{i_k})}{\|\bm{\tilde a}_{i_k}\|_2^2} \cdot \bm{\tilde a}_{i_k} \right\|_2^2\nonumber\\
		&=\left\|\left( I - \frac{\bm{\tilde a}_{i_k} \bm{\tilde a}^\top_{i_k}}{\|\bm{\tilde a}_{i_k}\|_2^2} \right) \bm{e}_{k-1} \right\|_2^2+\left\| \frac{(\bm{\tilde a}^\top_{i_k} \bm{x}_{LS} - \tilde{b}_{i_k})}{\|\bm{\tilde a}_{i_k}\|_2^2} \cdot \bm{\tilde a}_{i_k} \right\|_2^2\nonumber\\
		&=\bm{e}^\top_{k-1}\left( I - \frac{\bm{\tilde{a}}_{i_k} \bm{\tilde a}^\top_{i_k}}{\|\bm{\tilde a}_{i_k}\|_2^2} \right) \bm{e}_{k-1} + \frac{(\bm{\tilde a}^\top_{i_k} \bm{x}_{LS} - \tilde{b}_{i_k})^2}{\|\bm{\tilde a}_{i_k}\|_2^2}.	
	\end{align}
	Taking conditional expectation on both sides of the above equation, we obtain
	\begin{align*}
		\mathbb{E}_{k-1}\|\bm{e}_{k}\|_2^2&={\small\bm{e}^\top_{k-1}\left( I -\sum_{i_{k}=1}^{m}\frac{\left\| \bm{\tilde a}_{i_k}\right\|_2^2}{\| \tilde{A} \|_F^2}\cdot \frac{\bm{\tilde{a}}_{i_k} \bm{\tilde a}^\top_{i_k}}{\|\bm{\tilde a}_{i_k}\|_2^2} \right) \bm{e}_{k-1} +\sum_{i_{k}=1}^{m}\frac{\left\| \bm{\tilde a}_{i_k}\right\|_2^2}{\| \tilde{A} \|_F^2}\cdot \frac{(\bm{\tilde a}^\top_{i_k} \bm{x}_{LS} - \tilde{b}_{i_k})^2}{\|\bm{\tilde a}_{i_k}\|_2^2}}\\
		&=\bm{e}^\top_{k-1} \left(I-\frac{\widetilde{A}^\top\widetilde{A}}{\|\widetilde{A}\|^2_F}\right) \bm {e}_{k-1}+\frac{\|\widetilde{A}\bm{x}_{LS} - \bm {\tilde b} \|_2^2}{\|\widetilde{A}\|^2_F}.
	\end{align*}
	
{\tt (i)} If $\bm{e}_{k-1}=\bm{\tilde x}_{k-1} - \bm{x}_{LS} \in \mathcal{R}(\widetilde{A}^\top)$, then we have $ \|\tilde{A} \bm e_{k-1}\|_2 \geq {\sigma}_{\min}(\widetilde A)\cdot\|\bm e_{k-1}\|_2$, and
	\begin{align*}
		\mathbb{E}\|\bm{e}_{k}\|_2^2&=\mathbb{E}\left[	\mathbb{E}_{k-1}\|\bm{e}_{k}\|_2^2\right]=\mathbb{E}\left[\|\bm {e}_{k-1}\|_2^2-\frac{\|\tilde{A} \bm e_{k-1}\|_2^2}{\|\widetilde{A}\|_F^2}+\frac{\|\widetilde{A}\bm{x}_{LS} - \bm {\tilde b} \|_2^2}{\|\widetilde{A}\|^2_F}\right]\\
		&\leq \left(1-\frac{{\sigma}^2_{\min}(\widetilde A)}{\|\widetilde{A}\|_F^2} \right)\mathbb{E}\|\bm {e}_{k-1}\|_2^2+\frac{\|\widetilde{A}\bm{x}_{LS} - \bm {\tilde b} \|_2^2}{\|\widetilde{A}\|^2_F}.
	\end{align*}

{\tt (ii)} If  $\bm{e}_{k-1}=\bm{\tilde x}_{k-1} - \bm{x}_{LS} \in \mathcal{N}(\widetilde{A})$, then we have $ \|\tilde{A} \bm {e}_{k-1}\|_2 = 0$, i.e., $\bm{\tilde{a}}_{i_k}^T \bm {e}_{k-1}=0$, $i_k \in \{1,2,\ldots,m\}$. It follows from \eqref{eqthm3.5.1} that 
$$
\|\bm{e}_{k}\|_2^2=\bm{e}^\top_{k-1}\left( I - \frac{\bm{\tilde{a}}_{i_k} \bm{\tilde a}^\top_{i_k}}{\|\bm{\tilde a}_{i_k}\|_2^2} \right) \bm{e}_{k-1} + \frac{(\bm{\tilde a}^\top_{i_k} \bm{x}_{LS} - \tilde{b}_{i_k})^2}{\|\bm{\tilde a}_{i_k}\|_2^2}=\|\bm{e}_{k-1}\|_2^2+\frac{(\bm{\tilde a}^\top_{i_k} \bm{x}_{LS} - \tilde{b}_{i_k})^2}{\|\bm{\tilde a}_{i_k}\|_2^2}.
$$
	
	On the one hand, if $\bm{\tilde a}^\top_{i_k} \bm{x}_{LS} - \tilde{b}_{i_k}=0$ at the $k$-th iterate, then $\bm{\tilde x}_{k}=\bm{\tilde x}_{k-1}$. 	
	On the other hand, if $\bm{\tilde{a}}^\top_{i_k} \bm{x}_{LS} - \tilde{b}_{i_k}\neq 0$, then 
	\begin{align*}
	\mathbb{E}\|\bm{e}_{k}\|_2^2&=\mathbb{E}\|\bm{e}_{k-1}\|_2^2+\sum_{i_{k}=1}^{m}\frac{\left\| \bm{\tilde a}_{i_k}\right\|_2^2}{\| \tilde{A} \|_F^2}\cdot
	\frac{(\bm{\tilde a}^\top_{i_k} \bm{x}_{LS} - \tilde{b}_{i_k})^2}{\|\bm{\tilde a}_{i_k}\|_2^2}\\
	&=\mathbb{E}\|\bm{e}_{k-1}\|_2^2+\frac{\|\widetilde{A}\bm{x}_{LS} - \bm {\tilde b} \|_2^2}{\|\widetilde{A}\|^2_F}.
\end{align*}

{\tt (iii)} In this case, we notice that $\bm{e}_{k-1}=\bm{\tilde x}_{k-1} - \bm{x}_{LS}=(\bm{\tilde x}_{k-1} -\bm{ x}_{LS})_{\mathcal {R}(\widetilde{A}^\top)}+(\bm{\tilde x}_{k-1} - \bm {x}_{LS})_{\mathcal {N}(\widetilde{A})}$. Thus, $$ \|\tilde{A} \bm {e}_{k-1}\|_2 = \left\|\tilde{A}(\bm{\tilde x}_{k-1} - \bm{x}_{LS})_{\mathcal {R}(\widetilde{A}^\top)}\right\|_2 \geq {\sigma}_{\min}(\widetilde A)\cdot\left\|(\bm{\tilde x}_{k-1} - \bm{x}_{LS})_{\mathcal {R}(\widetilde{A}^\top)}\right\|_2.
$$ 
Let $$\xi_{k-1}=\frac{\mathbb{E}\left\|(\bm{\tilde x}_{k-1} - \bm{x}_{LS})_{\mathcal{R}(\widetilde{A}^\top)}\right\|_2^2}{\mathbb{E}\left\|(\bm{\tilde x}_{k-1} - \bm{x}_{LS})\right\|_2^2},$$
then we have that
\begin{align*}
	&\mathbb{E}\|\bm{e}_{k}\|_2^2=\mathbb{E}\left[	\mathbb{E}_{k-1}\|\bm{e}_{k}\|_2^2\right]=\mathbb{E}\left[\|\bm {e}_{k-1}\|_2^2-\frac{\|\tilde{A} \bm e_{k-1}\|_2^2}{\|\widetilde{A}\|_F^2}+\frac{\|\widetilde{A}\bm{x}_{LS} - \bm {\tilde b} \|_2^2}{\|\widetilde{A}\|^2_F}\right]\\
	&\leq \mathbb{E} \left[\|\bm {e}_{k-1}\|_2^2\right]- \frac{{\sigma}^2_{\min}(\widetilde A)}{\|\widetilde{A}\|_F^2}\cdot\mathbb{E} \left[\left\|(\bm{\tilde x}_{k-1} - \bm{x}_{LS})_{\mathcal {R}(\widetilde{A}^\top)}\right\|_2^2\right]+\frac{\|\widetilde{A}\bm{x}_{LS}-\bm {\tilde b} \|_2^2}{\|\widetilde{A}\|^2_F}\\
	&=\left(1-\frac{{\sigma}^2_{\min}(\widetilde A)}{\|\widetilde{A}\|_F^2}\cdot \xi_{k-1}\right)\mathbb{E}\|\bm {e}_{k-1}\|_2^2+\frac{\|\widetilde{A}\bm{x}_{LS} - \bm {\tilde b} \|_2^2}{\|\widetilde{A}\|^2_F}.
\end{align*}

\end{proof}

\begin{remark}\label{re3.3}
{\it Three remarks are in order. First, Theorem \ref{thm3.5} removes the strict condition \scalebox{0.95}{$\bm {\tilde x}_0- \bm {x}_{LS} \in \mathcal{R}(\widetilde{A}^\top)$} that is required in Theorem \ref{thm2.3}. Compared with Theorem \ref{thm2.4}, Theorem \ref{thm3.5} establishes the upper bound of \scalebox{0.95}{$\mathbb{E}\|\bm {{\tilde x}}_{k}-\bm {x}_{LS}\|_2$}, which we are interested in.
Second, in cases {\tt(i)} and {\tt(iii)}, the RK method converges to a ball centered at $\bm{x}_{LS}$. As the convergence factor $0<\xi_{k-1}<1$ in case {\tt(iii)}, its convergence rate may be slower than that in case {\tt(i)}. Third, if case {\tt(ii)} occurs, the RK method will not converge. In fact, we find experimentally that case {\tt(ii)} rarely occurs in practice; see also Experiment \ref{sec5.1}. Specifically, if $\widetilde{A}$ has full column rank, case {\tt (ii)} occurs if and only if $\bm e_{k-1}=0$, implying $\bm{\tilde{x}}_{k-1}=\bm{x}_{LS}$. 
}	
\end{remark}

\section{Main results in terms of the behavior of the norm of the expected error}\label{sec3}


Considering {\tt (iii)} of Theorem \ref{thm3.5}, which is the general case in practice, if $\xi_{k-1}$ approaches to zero, then the result implies the RK method may have a much slower convergence rate than the real situation.
From Jensen's inequality, we have that 
\begin{equation}\label{jensen}
\left\|\mathbb{E}[\bm {\tilde x} _{k}-\bm{x}_{LS}]\right\|_2 \leq \sqrt{\mathbb{E}[\|\bm {\tilde x}_{k}-\bm{x}_{LS}\|_2^2]}.
\end{equation}
Thus, the norm of the expectation and the expectation of the norm are closely related. In this section, we focus on the behavior of $\left\|\mathbb{E}[\bm {\tilde x} _{k}-\bm{x}_{LS}]\right\|$ for four methods as $k$ increases. 

\subsection{On convergence of RK and REK for the doubly noisy linear system}\label{sec3.1}
In this subsection, we consider the behavior of $\|\mathbb{E}[\bm {\tilde x}_{k}-\bm{x}_{LS}]\|_2$ of the RK method and the REK method for the doubly noisy linear system. RK and REK are two popular Kaczmarz methods for consistent and inconsistent linear systems, respectively.
We first need the following lemma for subsequent analysis. The proof is straightforward and is omitted.
\begin{lemma}\label{lem3.1}
Let matrix $\widetilde{A} \in \mathbb{R}^{m\times n}$ with $\mathrm{rank}(\widetilde{A})=r$.
On the one hand, if $\bm {x} \in \mathcal{R}(\widetilde{A}^{\top})$, then we have
\begin{equation}\label{a1}
\left\|\left[I-\left(I-\frac{\widetilde{A}^{\top}\widetilde{A}}{\|\widetilde{A}\|^2_F}\right)^k\right]\bm {x}\right\|_2 \leq \left[1-\left(1-\frac{{\sigma}^2_{\max}(\widetilde A)}{\|\widetilde{A}\|^2_F}\right)^k\right]\|\bm {x}\|_2,
\end{equation}
and
\begin{equation}\label{a2}
 \left\|\left(I-\frac{\widetilde{A}^{\top}\widetilde{A}}{\|\widetilde{A}\|^2_F}\right)^k \bm {x}\right\|_2 \leq \left(1-\frac{{\sigma}^2_{\min}(\widetilde A)}{\|\widetilde{A}\|^2_F}\right)^k\|\bm {x}\|_2.
\end{equation}

On the other hand, if $\bm {x} \in \mathcal{N}(\widetilde{A})$, then we have
\begin{equation}\label{a4}
\left\|\left[I-\left(I-\frac{\widetilde{A}^{\top}\widetilde{A}}{\|\widetilde{A}\|^2_F}\right)^k\right]\bm {x}\right\|_2 =0.
\end{equation}
and
\begin{equation}\label{a3}
\left\|\left(I-\frac{\widetilde{A}^{\top}\widetilde{A}}{\|\widetilde{A}\|^2_F}\right)^k \bm {x}\right\|_2 =\|\bm {x}\|_2,
\end{equation}

\end{lemma}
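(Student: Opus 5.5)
Set $M=I-\widetilde{A}^{\top}\widetilde{A}/\|\widetilde{A}\|_F^2$. The plan is to diagonalize $M$ using a thin SVD of $\widetilde{A}$ and then read each of the four claims off its eigenvalues.

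Write $\widetilde{A}=U\Sigma V^{\top}$ with $V=[V_r,V_0]$, where the columns of $V_r$ form an orthonormal basis of $\mathcal{R}(\widetilde{A}^{\top})$ and those of $V_0$ an orthonormal basis of $\mathcal{N}(\widetilde{A})$. Then
$$M=V_r\,\mathrm{diag}\bigl(1-\sigma_i^2/\|\widetilde{A}\|_F^2\bigr)_{i=1}^{r}V_r^{\top}+V_0V_0^{\top}.$$
Consequently
$$M^k=V_r\,\mathrm{diag}(\lambda_i^k)V_r^{\top}+V_0V_0^{\top},\qquad \lambda_i=1-\frac{\sigma_i^2}{\|\widetilde{A}\|_F^2}.$$
Since $\sigma_i^2\le\|\widetilde{A}\|_F^2$, we have $0\le\lambda_i<1$ for $i=1,\dots,r$, because every nonzero singular value is positive.

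For $\bm x\in\mathcal{R}(\widetilde{A}^{\top})$, write $\bm x=V_r\bm y$ with $\|\bm y\|_2=\|\bm x\|_2$. Then $\|M^k\bm x\|_2=\|\mathrm{diag}(\lambda_i^k)\bm y\|_2\le\max_i\lambda_i^k\|\bm x\|_2$. The largest $\lambda_i$ corresponds to $\sigma_{\min}(\widetilde{A})$, the smallest nonzero singular value, which gives \eqref{a2}.

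Similarly, $\|(I-M^k)\bm x\|_2=\|\mathrm{diag}(1-\lambda_i^k)\bm y\|_2\le\max_i(1-\lambda_i^k)\|\bm x\|_2$. Because $0\le\lambda_i<1$, the factor $1-\lambda_i^k$ increases as $\lambda_i$ decreases. The maximum is therefore attained at the smallest $\lambda_i$, which corresponds to $\sigma_{\max}(\widetilde{A})$, and this gives \eqref{a1}. The only point needing care is this monotonicity, which relies on $\lambda_i\ge0$.

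For $\bm x\in\mathcal{N}(\widetilde{A})$, we have $\widetilde{A}^{\top}\widetilde{A}\bm x=0$, so $M\bm x=\bm x$. By induction $M^k\bm x=\bm x$, which gives \eqref{a3}, and then $(I-M^k)\bm x=0$, which gives \eqref{a4}. No step presents a real obstacle.
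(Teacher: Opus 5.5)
Your proof is correct. It is the spectral (SVD) argument the paper has in mind when it calls the proof straightforward and omits it. You also correctly single out $0\le\lambda_i<1$, which follows from $\sigma_i^2\le\|\widetilde{A}\|_F^2$, as the key fact: it justifies both $\max_i\lambda_i^k=(\max_i\lambda_i)^k$ in \eqref{a2} and the monotonicity of $1-\lambda^k$ used for \eqref{a1}.
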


We are ready to show the behavior of $\|\mathbb{E}[\bm {\tilde x}_{k}-\bm{x}_{LS}]\|_2$ obtained from the RK method, for solving the doubly noisy linear system \eqref{1.1}. Different from \cite[Theorem 2.3]{Bergou-2024}, we remove the initial assumption and the consistency of the original noiseless system is unnecessary. 
Moreover, unlike \cite[Theorem 2.5]{Bergou-2025}, we make use of the least squares solution $\bm{x}_{LS}$ as a limiting reference point. This is because we are interested in how far the Kaczmarz iterate $\bm {\tilde x}_{k}$ applied to \eqref{1.1} is from the true solution in practical applications. 
\begin{theorem}\label{thm3.1}
Let $\bm {\tilde x}_{k}$ be the $k$-th iterate of the RK method applied to the doubly noisy linear system \eqref{1.1}. Let $\bm {x}_{LS}=A^{\dagger}\bm b$ and $\bm {\tilde x}_{LS}=\widetilde{A}^{\dagger}\bm {\tilde b}$ be the least-squares solutions of the noiseless linear system \eqref{1.2} and the doubly noisy linear system \eqref{1.1}, respectively. For any initial point $\bm {\tilde x}_{0}\in \mathbb{R}^{n}$ and iteration number $k \in \mathbb{N}^+$, we have
\begin{align*}
\|\mathbb{E} (\bm {\tilde x}_{k}-\bm {x}_{LS})\|_2\leq \alpha^{k}\left\|(\bm {\tilde x}_0-\bm {x}_{LS})_{\mathcal {R}(\widetilde{A}^\top)}\right\|_2&+\beta \big\|(\bm {x}_{LS}-\bm {\tilde x}_{LS})_{\mathcal {R}(\widetilde{A}^\top)}\big\|_2\\
&+\big\|(\bm {\tilde x}_0-\bm {x}_{LS})_{\mathcal {N}(\widetilde{A})}\big\|_2,\nonumber
\end{align*}
where $\alpha=1-\frac{{\sigma}^2_{\min}(\widetilde A)}{\|\widetilde{A}\|^2_F}$ and $\beta=1-\left(1-\frac{{\sigma}^2_{\max}(\widetilde A)}{\|\widetilde{A}\|^2_F}\right)^k$.
\end{theorem}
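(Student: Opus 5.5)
The plan is to derive an exact linear recursion for the mean error and then split it along the orthogonal decomposition $\mathbb{R}^n=\mathcal{R}(\widetilde{A}^\top)\oplus\mathcal{N}(\widetilde{A})$. From the error identity in the proof of Theorem \ref{thm3.5},
\[
\bm e_k=\Bigl(I-\tfrac{\bm{\tilde a}_{i_k}\bm{\tilde a}_{i_k}^\top}{\|\bm{\tilde a}_{i_k}\|_2^2}\Bigr)\bm e_{k-1}-\tfrac{\bm{\tilde a}_{i_k}^\top\bm x_{LS}-\tilde b_{i_k}}{\|\bm{\tilde a}_{i_k}\|_2^2}\bm{\tilde a}_{i_k}.
\]
Taking $\mathbb{E}_{k-1}$ with the RK probabilities $\|\bm{\tilde a}_{i}\|_2^2/\|\widetilde A\|_F^2$ and then the full expectation gives
\[
\mathbb{E}\bm e_k=P\,\mathbb{E}\bm e_{k-1}-\tfrac{1}{\|\widetilde A\|_F^2}\widetilde A^\top(\widetilde A\bm x_{LS}-\bm{\tilde b}),\qquad P=I-\tfrac{\widetilde A^\top\widetilde A}{\|\widetilde A\|_F^2}.
\]
The forcing term is then rewritten. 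Since $\widetilde A^\top\widetilde A\bm{\tilde x}_{LS}=\widetilde A^\top\bm{\tilde b}$ (normal equations), we have $\widetilde A^\top(\widetilde A\bm x_{LS}-\bm{\tilde b})=\widetilde A^\top\widetilde A(\bm x_{LS}-\bm{\tilde x}_{LS})$. Setting $\bm d=\bm x_{LS}-\bm{\tilde x}_{LS}$, the recursion becomes $\mathbb{E}\bm e_k=P\,\mathbb{E}\bm e_{k-1}-(I-P)\bm d$.

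Next we unroll the recursion. It reads $\mathbb{E}\bm e_k=P^k\bm e_0-\sum_{j=0}^{k-1}P^j(I-P)\bm d$, and the sum telescopes to $(I-P^k)\bm d$. Hence
\[
\mathbb{E}\bm e_k=P^k\bm e_0-(I-P^k)\bm d.
\]
Now decompose $\bm e_0=(\bm e_0)_{\mathcal R(\widetilde A^\top)}+(\bm e_0)_{\mathcal N(\widetilde A)}$ and treat each piece as follows.

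\begin{itemize}
\item For the range part, \eqref{a2} of Lemma \ref{lem3.1} bounds $\|P^k(\bm e_0)_{\mathcal R}\|_2\le\alpha^k\|(\bm e_0)_{\mathcal R}\|_2$.
\item For the null part, \eqref{a3} gives $\|P^k(\bm e_0)_{\mathcal N}\|_2=\|(\bm e_0)_{\mathcal N}\|_2$; indeed $P^k$ acts as the identity on $\mathcal N(\widetilde A)$.
\item For $\bm d$, we split it the same way. By \eqref{a4}, $(I-P^k)$ annihilates $\bm d_{\mathcal N}$, and by \eqref{a1}, $\|(I-P^k)\bm d_{\mathcal R}\|_2\le\beta\|\bm d_{\mathcal R}\|_2$.
\end{itemize}

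Applying the triangle inequality to the three pieces yields exactly the stated bound.

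The key step is recognizing the forcing term as $(I-P)\bm d$ via the normal equations for $\bm{\tilde x}_{LS}$; this is what makes the sum telescope. The only delicate point is \eqref{a1} itself, which is quoted from Lemma \ref{lem3.1}. If I had to justify it, I would diagonalize $P$ on $\mathcal R(\widetilde A^\top)$, where its eigenvalues are $1-\sigma_i^2/\|\widetilde A\|_F^2\in[0,1)$. Then $1-(1-\sigma_i^2/\|\widetilde A\|_F^2)^k$ is maximized at $\sigma_{\max}$, and the bound follows.
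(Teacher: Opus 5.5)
Your proposal is correct and follows essentially the same route as the paper: you rewrite the mean-error recursion via the normal equations as $\mathbb{E}\bm e_k=P\,\mathbb{E}\bm e_{k-1}-(I-P)\bm d$, unroll it to $P^k\bm e_0-(I-P^k)\bm d$, and bound the pieces on $\mathcal{R}(\widetilde{A}^\top)\oplus\mathcal{N}(\widetilde{A})$ using Lemma~\ref{lem3.1}. Your bookkeeping (the sum over $j=0,\dots,k-1$ and the coefficient $\beta$ on $\bm d_{\mathcal{R}}$) is cleaner than the paper's intermediate displays, which contain minor slips (a sum starting at $j=1$ and a stray $(1-\beta^k)$) that do not affect the final bound.
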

\begin{proof}
Applying the RK method to \eqref{1.1}, it follows that
	\begin{align*}
	\bm{\tilde x}_k - \bm{x}_{LS} &= \bm{\tilde x}_{k-1} - \bm{x}_{LS} - \frac{\bm{\tilde{a}}^\top_{i_k} (\bm{\tilde x}_{k-1} - \bm{x}_{LS}) + \bm{\tilde{a}}^\top_{i_k} \bm{x}_{LS} - \tilde{b}_{i_k}}{\|\bm{\tilde{a}}_{i_k}\|_2^2} \cdot \bm{\tilde{a}}_{i_k} \\
	&= \left( I - \frac{\bm{\tilde{a}}_{i_k} \bm{\tilde{a}}^\top_{i_k}}{\|\bm{\tilde{a}}_{i_k}\|_2^2} \right) (\bm{\tilde x}_{k-1} - \bm{x}_{LS}) - \frac{\bm{\tilde{a}}^\top_{i_k} \bm{x}_{LS} - \tilde{b}_{i_k}}{\|\bm{\tilde{a}}_{i_k}\|_2^2} \cdot \bm{\tilde{a}}_{i_k}.
\end{align*}
Let \(\bm{e}_{k} = \bm{\tilde x}_{k} - \bm{x}_{LS}\), then
{\small	
\[
\begin{aligned}
\mathbb{E}_{k-1}\left[\bm{e}_{k}\right]&= \left( I -\sum_{i_{k}=1}^{m}\frac{\left\| \bm{\tilde a}_{i_k}\right\|_2^2}{\| \tilde{A} \|_F^2}\cdot \frac{\bm{\tilde{a}}_{i_k} \bm{\tilde{a}}^\top_{i_k}}{\|\bm{\tilde{a}}_{i_k}\|_2^2} \right) \bm{e}_{k-1} -\sum_{i_{k}=1}^{m}\frac{\left\| \bm{\tilde{a}}_{i_k}\right\|_2^2}{\| \tilde{A} \|_F^2}\cdot \frac{\bm{\tilde{a}}^\top_{i_k} \bm{x}_{LS} - \tilde{b}_{i_k}}{\|\bm{\tilde{a}}_{i_k}\|_2^2} \cdot \bm{\tilde{a}}_{i_k}\\
&=\left(I-\frac{\widetilde{A}^\top\widetilde{A}}{\|\widetilde{A}\|^2_F}\right) \bm {e}_{k-1}-\frac{\widetilde{A}^\top(\widetilde{A}\bm{x}_{LS} - \bm{\tilde b} )}{\|\widetilde{A}\|^2_F}\\
&=\left(I-\frac{\widetilde{A}^\top\widetilde{A}}{\|\widetilde{A}\|^2_F}\right) \bm {e}_{k-1}-\frac{\widetilde{A}^\top\widetilde{A}(\bm{x}_{LS} - \bm {\tilde x}_{LS} )}{\|\widetilde{A}\|^2_F},	
\end{aligned}
\]
}
where the last equation uses the normal equation $\widetilde{A}^\top\widetilde{A}\bm {\tilde x}_{LS}=\widetilde{A}^\top\bm {\tilde b} $. By the law of total expectation, we have
{\small
\[
\begin{aligned}
\mathbb{E}\left[\bm{e}_{k}\right]&=\mathbb{E}\left[\mathbb{E}_{k-1}^i\left[\bm{e}_{k}\right]\right]=\left(I-\frac{\widetilde{A}^\top\widetilde{A}}{\|\widetilde{A}\|^2_F}\right) \mathbb{E}\left[\bm {e}_{k-1}\right]-\frac{\widetilde{A}^\top\widetilde{A}(\bm{x}_{LS} - \bm {\tilde x}_{LS})}{\|\widetilde{A}\|^2_F}\\
	&={\small\left(I-\frac{\widetilde{A}^\top\widetilde{A}}{\|\widetilde{A}\|^2_F}\right)^2 \mathbb{E}\left[\bm {e}_{k-2}\right]-\left(I-\frac{\widetilde{A}^\top\widetilde{A}}{\|\widetilde{A}\|^2_F}\right) \frac{\widetilde{A}^\top\widetilde{A}(\bm{x}_{LS} - \bm {\tilde x}_{LS} )}{\|\widetilde{A}\|^2_F}-\frac{\widetilde{A}^\top\widetilde{A}(\bm{x}_{LS} - \bm {\tilde x}_{LS} )}{\|\widetilde{A}\|^2_F}}\\
	&=\left(I-\frac{\widetilde{A}^\top\widetilde{A}}{\|\widetilde{A}\|^2_F}\right)^k \bm {e}_{0}-\sum_{j=1}^{k-1}\left(I-\frac{\widetilde{A}^\top\widetilde{A}}{\|\widetilde{A}\|^2_F}\right)^j \frac{\widetilde{A}^\top\widetilde{A}(\bm{x}_{LS} - \bm {\tilde x}_{LS} )}{\|\widetilde{A}\|^2_F}\\
	&=\left(I-\frac{\widetilde{A}^\top\widetilde{A}}{\|\widetilde{A}\|^2_F}\right)^k \bm {e}_{0}-\left[I-\left(I-\frac{\widetilde{A}^\top\widetilde{A}}{\|\widetilde{A}\|^2_F}\right)^k\right] (\bm{x}_{LS} - \bm {\tilde x}_{LS}).\\
\end{aligned}
\]
}

Denote by $T=I-\frac{\widetilde{A}^\top\widetilde{A}}{\|\widetilde{A}\|^2_F}$, $\alpha=1-\frac{{\sigma}^2_{\min}(\widetilde A)}{\|\widetilde{A}\|^2_F}$, and $\beta=1-\left(1-\frac{{\sigma}^2_{\max}(\widetilde A)}{\|\widetilde{A}\|^2_F}\right)^k$, we have from Lemma \ref{lem3.1} that
{\small
	\[
	\begin{aligned}
\left\|\mathbb{E}\left[\bm{e}_{k}\right]\right\|_2&=\left\|T^k \cdot \bm {e}_{0}-\left(I-T^k\right)\cdot (\bm{x}_{LS} - \bm {\tilde x}_{LS})\right\|_2\\
& \leq \left\| \left(I-T^k\right)\cdot(\bm{x}_{LS} - \bm {\tilde x}_{LS}) \right\|_2+\left\|T^k \cdot \bm {e}_{0}\right\|_2\\
& \leq \left\| \left(I-T^k\right)\cdot (\bm{x}_{LS} - \bm {\tilde x}_{LS})_{\mathcal {R}(\widetilde{A}^\top)}\right\|_2+\left\|\left(I-T^k\right)\cdot(\bm{x}_{LS} - \bm {\tilde x}_{LS})_{\mathcal {N}(\widetilde{A})}\right\|_2\\
&\quad +\left\|T^k\cdot\left(\bm{e}_0\right)_{\mathcal {R}(\widetilde{A}^\top)}\right\|_2+\left\|T^k\cdot\left(\bm {e}_0\right)_{\mathcal {N}(\widetilde{A})}\right\|_2\\
&\leq (1-\beta^k)\left\|(\bm{x}_{LS} - \bm {\tilde x}_{LS})_{\mathcal {R}(\widetilde{A}^\top)} \right\|_2+\alpha^{k} \left\|\left(\bm{e}_0\right)_{\mathcal {R}(\widetilde{A}^\top)}\right\|_2+ \left\|\left(\bm {e}_0\right)_{\mathcal {N}(\widetilde{A})}\right\|_2\\
&=\alpha^{k}\left\|(\bm {\tilde x}_0-\bm {x}_{LS})_{\mathcal {R}(\widetilde{A}^\top)}\right\|_2+\beta \left\|(\bm{x}_{LS} - \bm {\tilde x}_{LS})_{\mathcal {R}(\widetilde{A}^\top)} \right\|_2+ \left\|\left(\bm {\tilde x}_0-\bm {x}_{LS}\right)_{\mathcal {N}(\widetilde{A})}\right\|_2.
\end{aligned}
\]}
\end{proof}

\begin{remark}\label{rem4.1}
{\it  A direct consequence of Theorem \ref{thm3.1} is that there is no need to worry about the influence of the factor $\xi_{k-1}$ appearing in Theorem \ref{thm3.5} for the convergence. 
More precisely, Theorem \ref{thm3.1} indicates that the RK method for \eqref{1.1} converges to a ball centered at the least-squares solution $\bm{x}_{LS}$ of the original noiseless system. The radius of this ball is referred to as the convergence horizon, which is determined by
\scalebox{0.85}{$\big\|(\bm{x}_{LS}-\bm{\tilde{x}}_{LS})_{\mathcal{R}(\widetilde{A}^{\top})}\big\|_2+\big\|(\bm{\tilde{x}}_0-\bm{x}_{LS})_{\mathcal{N}(\widetilde{A})}\big\|_2$}.
During the initial stage of the iteration, the convergence rate of RK is influenced not only by the quantity
\scalebox{0.9}{$\sigma_{\min}^2(\widetilde{A})/\|\widetilde{A}\|_F^2$},
but also by
\scalebox{0.9}{$\sigma_{\max}^2(\widetilde{A})/\|\widetilde{A}\|_F^2$}.
However, as the iteration number $k$ becomes sufficiently large, the convergence rate is dominated by the quantity
\scalebox{0.9}{$\sigma_{\min}^2(\widetilde{A})/\|\widetilde{A}\|_F^2$}.
}
\end{remark}

Next, we analyze the behavior of $\|\mathbb{E}[\bm {\tilde x}_{k}-\bm{x}_{LS}]\|_2$ obtained from the REK method, for solving the doubly noisy linear system \eqref{1.1}. To the best of our knowledge, this is the first work to analyze the REK method for solving doubly noisy linear system.
\begin{theorem}\label{thm3.2}
	Let $\bm {\tilde x}_{k}$ be the $k$-th iterate of the REK method applied to the doubly noisy linear system \eqref{1.1}. Let $\bm {x}_{LS}=A^{\dagger}\bm b$ and $\bm {\tilde x}_{LS}=\widetilde{A}^{\dagger}\bm {\tilde b}$ be the least-squares solutions of the noiseless linear system \eqref{1.2} and the doubly-noisy linear system \eqref{1.1}, respectively. For any $\bm {\tilde x}_{0}\in \mathbb{R}^{n}$ and iteration number $k \in \mathbb{N}^+$, we have
{\small
\begin{align*}
	\|\mathbb{E} (\bm {\tilde x}_{k}-\bm {x}_{LS})\|_2\leq \alpha^{k}\left\|(\bm {\tilde x}_0-\bm {x}_{LS})_{\mathcal {R}(\widetilde{A}^\top)}\right\|_2+k\alpha^{k-1}\frac{\|\widetilde{A}^\top\bm {\tilde b}\|_2}{\|\widetilde{A}\|^2_F} &+\beta\left\|(\bm {x}_{LS}-\bm {\tilde x}_{LS})_{\mathcal {R}(\widetilde{A}^\top)}\right\|_2\\
	&+\left\|(\bm {\tilde x}_0-\bm {x}_{LS})_{\mathcal {N}(\widetilde{A})}\right\|_2,\nonumber
\end{align*}}
where $\alpha=1-\frac{{\sigma}^2_{\min}(\widetilde A)}{\|\widetilde{A}\|^2_F}$ and $\beta=1-\left(1-\frac{{\sigma}^2_{\max}(\widetilde A)}{\|\widetilde{A}\|^2_F}\right)^k$.
\end{theorem}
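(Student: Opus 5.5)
Throughout, write $T=I-\widetilde{A}^\top\widetilde{A}/\|\widetilde{A}\|_F^2$ and $\bm e_k=\bm{\tilde x}_k-\bm x_{LS}$. I will follow the template of Theorem~\ref{thm3.1}. The only difference from RK is the auxiliary vector $\bm z_{k-1}$ in the REK update, so I will track the mean of $\bm z_k$ separately and treat it as a decaying perturbation.

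\textbf{Step 1: mean recursion for $\bm z$.} With $\bm z_0=\bm{\tilde b}$ and column sampling probabilities $\|\bm{\bar a}_{j}\|_2^2/\|\widetilde A\|_F^2$, conditioning gives
\[
\mathbb{E}_{k-1}[\bm z_k]=\Bigl(I-\tfrac{\widetilde A\widetilde A^\top}{\|\widetilde A\|_F^2}\Bigr)\bm z_{k-1}.
\]
Hence $\mathbb{E}[\bm z_k]=\bigl(I-\widetilde A\widetilde A^\top/\|\widetilde A\|_F^2\bigr)^k\bm{\tilde b}$.

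\textbf{Step 2: mean recursion for $\bm e$.} Write the REK row step as in the proof of Theorem~\ref{thm3.1}, with the extra term $-(z_{k-1})_{i_k}\bm{\tilde a}_{i_k}/\|\bm{\tilde a}_{i_k}\|_2^2$. The vector $\bm z_{k-1}$ is measurable with respect to the past, and the row index is sampled independently of the current column index. Averaging over $i_k$ therefore yields
\[
\mathbb{E}_{k-1}[\bm e_k]=T\bm e_{k-1}-\frac{\widetilde A^\top\widetilde A(\bm x_{LS}-\bm{\tilde x}_{LS})}{\|\widetilde A\|_F^2}-\frac{\widetilde A^\top\bm z_{k-1}}{\|\widetilde A\|_F^2}.
\]
Next I use $\widetilde A^\top\bigl(I-\widetilde A\widetilde A^\top/\|\widetilde A\|_F^2\bigr)^{j}=T^{j}\widetilde A^\top$. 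Taking full expectations and substituting Step 1, the last term becomes $-T^{k-1}\widetilde A^\top\bm{\tilde b}/\|\widetilde A\|_F^2$.

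\textbf{Step 3: unroll.} Unrolling the recursion gives
\[
\mathbb{E}[\bm e_k]=T^k\bm e_0-(I-T^k)(\bm x_{LS}-\bm{\tilde x}_{LS})-\sum_{j=0}^{k-1}T^{k-1-j}\,T^{j}\,\frac{\widetilde A^\top\bm{\tilde b}}{\|\widetilde A\|_F^2}.
\]
The sum equals $k\,T^{k-1}\widetilde A^\top\bm{\tilde b}/\|\widetilde A\|_F^2$.

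\textbf{Step 4: bound each term.} The first two terms are handled exactly as in Theorem~\ref{thm3.1}. I split $\bm e_0$ and $\bm x_{LS}-\bm{\tilde x}_{LS}$ into their $\mathcal R(\widetilde A^\top)$ and $\mathcal N(\widetilde A)$ components and apply Lemma~\ref{lem3.1}, which produces the terms with $\alpha^k$, $\beta$, and the null-space term. For the new term, $\widetilde A^\top\bm{\tilde b}\in\mathcal R(\widetilde A^\top)$, so \eqref{a2} gives
\[
\Bigl\|T^{k-1}\widetilde A^\top\bm{\tilde b}\Bigr\|_2\le\alpha^{k-1}\|\widetilde A^\top\bm{\tilde b}\|_2 .
\]
This produces the term $k\alpha^{k-1}\|\widetilde A^\top\bm{\tilde b}\|_2/\|\widetilde A\|_F^2$, and the triangle inequality completes the bound.

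\textbf{Main obstacle.} The delicate point is the ordering of the two samplings in REK, together with the index of $\bm z$ that enters the $x$-update. This determines whether the coupling produces exactly $k$ copies of $T^{k-1}$ or an off-by-one variant. The intertwining identity $\widetilde A^\top(I-\widetilde A\widetilde A^\top/\|\widetilde A\|_F^2)^j=T^j\widetilde A^\top$ is what makes every summand in Step 3 equal, giving the factor $k\alpha^{k-1}$. I would check this carefully using the conditional expectations $\mathbb{E}_{k-1}$ and $\mathbb{E}^i_{k-1}$ defined in the introduction.
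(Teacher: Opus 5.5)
Your proposal is correct and matches the paper's proof step for step: it computes $\mathbb{E}[\bm{\tilde z}_k]=(I-\widetilde{A}\widetilde{A}^\top/\|\widetilde{A}\|_F^2)^k\bm{\tilde b}$, derives the conditional mean recursion with the extra term $-\widetilde{A}^\top\bm{\tilde z}_{k-1}/\|\widetilde{A}\|_F^2$, turns it into $-T^{k-1}\widetilde{A}^\top\bm{\tilde b}/\|\widetilde{A}\|_F^2$ via the intertwining identity, unrolls to $kT^{k-1}$, and bounds each term with Lemma~\ref{lem3.1}. Your ordering concern resolves as you assumed, since the update \eqref{rek} uses $(\bm z_{k-1})_{i_k}$ in the $x$-step, which gives exactly $k$ copies of $T^{k-1}$.
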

\begin{proof}
Applying the REK algorithm \eqref{rek} to the doubly-noisy linear system \eqref{1.1}, we have
{\small
\begin{align*}
	\mathbb{E}_{k-1}\left[\bm{\tilde z}_{k}\right]= \left( I -\sum_{j_{k}=1}^{n}\frac{\left\| \bm{\bar{a}}_{j_{k}}\right\|_2^2}{\| \tilde{A} \|_F^2}\cdot \frac{\bm{\bar{a}}_{j_k} \bm{\bar{a}}^\top_{j_k}}{\|\bm{\bar{a}}_{j_k}\|_2^2} \right) \bm{\tilde z}_{k-1}
	=\left(I-\frac{\widetilde{A}\widetilde{A}^\top}{\|\widetilde{A}\|^2_F}\right) \bm{\tilde z}_{k-1},
\end{align*}
\begin{align*}
\mathbb{E}\left[\bm{\tilde z}_{k}\right]=\mathbb{E}\left[\mathbb{E}_{k-1}\left[\bm{\tilde z}_{k}\right]\right]=\left(I-\frac{\widetilde{A}\widetilde{A}^\top}{\|\widetilde{A}\|^2_F}\right) \mathbb{E}\left[\bm{\tilde z}_{k-1}\right]=\cdots=\left(I-\frac{\widetilde{A}\widetilde{A}^\top}{\|\widetilde{A}\|^2_F}\right)^k \bm{\tilde z}_{0}
\end{align*}}
and	
{\small
\begin{align*}
\mathbb{E}_{k-1}\left[\bm{\tilde x}_{k} - \bm{x}_{LS}\right]	
	&=\bm{\tilde x}_{k-1} - \bm{x}_{LS} -\sum_{i_{k}=1}^{m}\frac{\left\| \bm{\bar{a}}_{i_{k}}\right\|_2^2}{\| \tilde{A} \|_F^2} \frac{\bm{\tilde{a}}^\top_{i_k} \bm{\tilde x}_{k-1} - \tilde{b}_{i_k}+(\tilde z_{k-1})_{i_k}}{\|\bm{\tilde{a}}_{i_k}\|_2^2} \cdot \bm{\tilde{a}}_{i_k}\\
	&=\bm{\tilde x}_{k-1} - \bm{x}_{LS} -\frac{\widetilde{A}^\top(\widetilde{A}\bm{\tilde x}_{k-1} - \bm{\tilde b}+\bm{\tilde z}_{k-1})}{\|\widetilde{A}\|^2_F}\\	
	&=(\bm{\tilde x}_{k-1} - \bm{x}_{LS})-\frac{\widetilde{A}^\top\widetilde{A}\bm{\tilde x}_{k-1} - \widetilde{A}^\top\bm{\tilde b}}{\|\widetilde{A}\|^2_F}-\frac{\widetilde{A}^\top\bm{\tilde z}_{k-1}}{\|\widetilde{A}\|^2_F}\\
	&=(\bm{\tilde x}_{k-1} - \bm{x}_{LS})-\frac{\widetilde{A}^\top\widetilde{A}\bm{\tilde x}_{k-1} - \widetilde{A}^\top\widetilde{A}\bm{\tilde x}_{LS}}{\|\widetilde{A}\|^2_F}-\frac{\widetilde{A}^\top\bm{\tilde z}_{k-1}}{\|\widetilde{A}\|^2_F}\\
	&=\left(I-\frac{\widetilde{A}^\top\widetilde{A}}{\|\widetilde{A}\|^2_F}\right)(\bm{\tilde x}_{k-1} - \bm{x}_{LS})-\frac{\widetilde{A}^{T}\widetilde{A}(\bm{x}_{LS} - \bm {\tilde x}_{LS})}{\|\widetilde{A}\|^2_F}-\frac{\widetilde{A}^\top\bm{\tilde z}_{k-1}}{\|\widetilde{A}\|^2_F}.
\end{align*}}
By the law of total expectation, we have
{\small
\begin{align*}
&\mathbb{E}\left[\bm{\tilde x}_{k} - \bm{x}_{LS}\right]=\mathbb{E}\left[\mathbb{E}_{k-1}\left[\bm{\tilde x}_{k} - \bm{x}_{LS}\right]\right]\\
&=\left(I-\frac{\widetilde{A}^\top\widetilde{A}}{\|\widetilde{A}\|^2_F}\right)\mathbb{E}\left[\bm{\tilde x}_{k-1} - \bm{x}_{LS}\right]-\frac{\widetilde{A}^{\top}\widetilde{A}(\bm{x}_{LS} - \bm {\tilde x}_{LS})}{\|\widetilde{A}\|^2_F}-\frac{\widetilde{A}^\top}{\|\widetilde{A}\|^2_F}\left(I-\frac{\widetilde{A}\widetilde{A}^\top}{\|\widetilde{A}\|^2_F}\right)^{k-1} \bm{\tilde z}_0\\
&=\left(I-\frac{\widetilde{A}^\top\widetilde{A}}{\|\widetilde{A}\|^2_F}\right)\mathbb{E}\left[\bm{\tilde x}_{k-1} - \bm{x}_{LS}\right]-\frac{\widetilde{A}^{\top}\widetilde{A}(\bm{x}_{LS} - \bm {\tilde x}_{LS})}{\|\widetilde{A}\|^2_F}-\left(I-\frac{\widetilde{A}^\top\widetilde{A}}{\|\widetilde{A}\|^2_F}\right)^{k-1} \frac{\widetilde{A}^\top\bm{\tilde z}_0}{\|\widetilde{A}\|^2_F}\\
&=\left(I-\frac{\widetilde{A}^\top\widetilde{A}}{\|\widetilde{A}\|^2_F}\right)^2\mathbb{E}\left[\bm{\tilde x}_{k-2} - \bm{x}_{LS}\right]-\left(I-\frac{\widetilde{A}^\top\widetilde{A}}{\|\widetilde{A}\|^2_F}\right)\frac{\widetilde{A}^{\top}\widetilde{A}(\bm{x}_{LS} - \bm {\tilde x}_{LS})}{\|\widetilde{A}\|^2_F}\\
&\quad\quad -\frac{\widetilde{A}^{\top}\widetilde{A}(\bm{x}_{LS} - \bm {\tilde x}_{LS})}{\|\widetilde{A}\|^2_F}-2\left(I-\frac{\widetilde{A}^\top\widetilde{A}}{\|\widetilde{A}\|^2_F}\right)^{k-1} \frac{\widetilde{A}^\top\bm{\tilde z}_0}{\|\widetilde{A}\|^2_F}\\
&=\left(I-\frac{\widetilde{A}^\top\widetilde{A}}{\|\widetilde{A}\|^2_F}\right)^k\mathbb{E}\left[\bm{\tilde x}_{0} - \bm{x}_{LS}\right]-\sum_{j=0}^{k-1}\left(I-\frac{\widetilde{A}^\top\widetilde{A}}{\|\widetilde{A}\|^2_F}\right)^j\frac{\widetilde{A}^{\top}\widetilde{A}(\bm{x}_{LS} - \bm {\tilde x}_{LS})}{\|\widetilde{A}\|^2_F}\\
&\quad\quad -k\left(I-\frac{\widetilde{A}^\top\widetilde{A}}{\|\widetilde{A}\|^2_F}\right)^{k-1} \frac{\widetilde{A}^\top\bm{\tilde z}_0}{\|\widetilde{A}\|^2_F}\\
&=\left(I-\frac{\widetilde{A}^\top\widetilde{A}}{\|\widetilde{A}\|^2_F}\right)^k\left(\bm{\tilde x}_{0} - \bm{x}_{LS}\right)-\left[I-\left(I-\frac{\widetilde{A}^\top\widetilde{A}}{\|\widetilde{A}\|^2_F}\right)^k\right](\bm{x}_{LS} - \bm {\tilde x}_{LS})\\
&\quad\quad -k\left(I-\frac{\widetilde{A}^\top\widetilde{A}}{\|\widetilde{A}\|^2_F}\right)^{k-1} \frac{\widetilde{A}^\top\bm{\tilde b}}{\|\widetilde{A}\|^2_F}.
\end{align*} }
Taking the 2-norm on both sides of the above equation yields
{\small	
\begin{align*}
\left\|\mathbb{E}\left[\bm{\tilde x}_{k} - \bm{x}_{LS}\right]\right\|_2 &\leq
\left\|\left(I-\frac{\widetilde{A}^\top\widetilde{A}}{\|\widetilde{A}\|^2_F}\right)^k\left(\bm{\tilde x}_{0} - \bm{x}_{LS}\right)\right\|_2+\left\|k\left(I-\frac{\widetilde{A}^\top\widetilde{A}}{\|\widetilde{A}\|^2_F}\right)^{k-1} \frac{\widetilde{A}^\top\bm{\tilde b}}{\|\widetilde{A}\|^2_F}\right\|_2\\
&\quad +\left\|\left[I-\left(I-\frac{\widetilde{A}^\top\widetilde{A}}{\|\widetilde{A}\|^2_F}\right)^k\right](\bm{x}_{LS} - \bm {\tilde x}_{LS})\right\|_2\\
&\leq\left(1-\frac{{\sigma}^2_{\min}(\widetilde A)}{\|\widetilde{A}\|^2_F}\right)^k\left\|\left(\bm{\tilde x}_{0} - \bm{x}_{LS}\right)_{\mathcal {R}(\widetilde{A}^\top)}\right\|_2+k\left(1-\frac{{\sigma}^2_{\min}(\widetilde A)}{\|\widetilde{A}\|^2_F}\right)^{k-1}\frac{\left\|\widetilde{A}^\top\bm{\tilde b}\right\|_2}{\|\widetilde{A}\|^2_F}\\
&\quad +\left[1-\left(1-\frac{{\sigma}^2_{\max}(\widetilde A)}{\|\widetilde{A}\|_F^2}\right)^k\right]\left\|(\bm{x}_{LS} - \bm {\tilde x}_{LS})_{\mathcal {R}(\widetilde{A}^\top)} \right\|_2+\left\|\left(\bm{\tilde x}_{0} - \bm{x}_{LS}\right)_{\mathcal {N}(\widetilde{A})}\right\|_2.
\end{align*}}

In summary,
\[
\begin{aligned}
	\|\mathbb{E} (\bm {\tilde x}_{k}-\bm {x}_{LS})\|_2\leq & \alpha^{k}\left\|(\bm {\tilde x}_0-\bm {x}_{LS})_{\mathcal {R}(\widetilde{A}^\top)}\right\|_2+k\alpha^{k-1}\frac{\|\widetilde{A}^\top\bm {\tilde b}\|_2}{\|\widetilde{A}\|^2_F}+\left\|(\bm {\tilde x}_0-\bm {x}_{LS})_{\mathcal {N}(\widetilde{A})}\right\|_2\\
	&+\beta\left\|(\bm {x}_{LS}-\bm {\tilde x}_{LS})_{\mathcal {R}(\widetilde{A}^\top)}\right\|_2,
\end{aligned}
\]
where $\alpha=1-\frac{{\sigma}^2_{\min}(\widetilde A)}{\|\widetilde{A}\|^2_F}$ and by $\beta=1-\left(1-\frac{{\sigma}^2_{\max}(\widetilde A)}{\|\widetilde{A}\|^2_F}\right)^k$.
\end{proof}

\begin{remark}\label{rema3.1}
{\it Compared with Theorem \ref{thm3.1}, the convergence estimate of the REK algorithm in Theorem \ref{thm3.2} contains an additional factor $k\alpha^{k-1}$, which affects its convergence behavior during the early stage of iterations. This also implies that the REK method may converge a little slower than the RK method for solving the doubly noisy linear system \eqref{1.1}.
Nevertheless, as the iteration number $k$ increases, both the convergence rates of the REK method and the RK method are governed only by the quantity
\scalebox{0.9}{$\sigma_{\min}^2(\widetilde{A})/\|\widetilde{A}\|_F^2$}.
Moreover, the convergence horizon of REK coincides with that of RK, and is determined by
\scalebox{0.9}{$\big\|(\bm{x}_{LS}-\bm{\tilde{x}}_{LS})_{\mathcal{R}(\widetilde{A}^{\top})}\big\|_2+\big\|(\bm{\tilde{x}}_0-\bm{x}_{LS})_{\mathcal{N}(\widetilde{A})}\big\|_2$}.
This indicates that, for solving the doubly noisy linear system \eqref{1.1}, the RK method may be better than the REK method. Experiment effectively verify our theoretical findings.}
\end{remark}

\subsection{Convergence of RBK and RDBK for the doubly noisy linear system}\label{sec3.2}
In this section, we analyze the behavior of $\|\mathbb{E}[\bm {{\tilde x}}_{k}-\bm{x}_{LS}]\|_2$ for the RBK and RDBK methods applied to \eqref{1.1} directly. 
These two methods are two popular block Kaczmarz methods that are for consistent and inconsistent linear systems, respectively.
\begin{theorem}\label{thm3.3}
	Let $\bm{\tilde{x}}_{k}$ be the $k$-th iteration obtained from the RBK method applied to the doubly-noisy linear system \eqref{1.1}. Let
$\bm{x}_{LS}=A^{\dagger}\bm{b}$ and
$\bm{\tilde{x}}_{LS}=\widetilde{A}^{\dagger}\bm{\tilde{b}}$
be the least-squares solutions of the noiseless system \eqref{1.2} and the doubly noisy system \eqref{1.1}, respectively. Consider the row partition \scalebox{0.9}{$\widetilde{A}=(\widetilde{A}_{\tau_1}^\top, \widetilde{A}_{\tau_2}^\top,\dots,\widetilde{A}_{\tau_p}^\top)^\top$}, with row paving $(p,\alpha_1,\alpha_2)$. Let $\widetilde{A}_{\tau_i}=U_i\Sigma_iV_i^\top$ be the full singular value decomposition of the row block $\widetilde{A}_{\tau_i}$, where {\small $V_i^\top=\underset{r_i}{\big[V_{r_i}} \bigm| \underset{n-r_i}{V_{n-r_i}\big]}$} is the corresponding partition of $V_i^{\top}$, and $r_i$ is the rank of $\widetilde{A}_{\tau_i}$. If the matrix 
{\small$$
\widetilde{A}_{\tau}=\frac{1}{\|\tilde{A} \|_F}(\|\widetilde{A}_{\tau_1}\|_F V_{r_1}, \|\widetilde{A}_{\tau_2}\|_F V_{r_2},\ldots,\|\widetilde{A}_{\tau_p}\|_F V_{r_p})\in \mathbb{R}^{n\times{\sum_{i=1}^{p} r_i}}
$$} is of full row rank, then for any initial point $\bm {\tilde x}_0\in \mathbb{R}^{n}$ and any iteration number
	$k\in\mathbb{N}^{+}$, we have
\begin{align*}
		\|\mathbb{E}(\bm {\tilde x}_{k}-\bm {x}_{LS})\|_2\leq (1-\eta)^k\left\|\left[\bm{\tilde x}_{0} - \bm{x}_{LS}\right]\right\|_2+\frac{\left[\cos\theta_{\gamma}\cdot\left(\left\| \bm {\tilde x}_{LS}\right\|_2+\frac{\left\|\bm {\tilde r}\right\|_2}{\sqrt{\alpha_1}}\right)+\left\|\bm {e}_{LS}\right\|_2\right]}{\eta} ,
\end{align*}
where 
\begin{equation}
\cos \theta_{\gamma}=\max_{\substack{ {i_k}\in \{1, 2,\dots,p\}}}\! \cos\angle(\bm {\tilde x}_{LS}-\bm {\tilde x}_{LS}^{(i_k)}, \mathcal {R}(\widetilde{A}_{\tau_{i_k}}^\top)),
\end{equation}
and $\eta=\lambda_{\min}(\widetilde{A}_{\tau}\widetilde{A}_{\tau}^\top)$, $\bm {\tilde r}=\widetilde {\bm {b}}-\widetilde{A} \bm {\tilde x}_{LS}$, $\bm {e}_{LS}=\bm{x}_{LS}-\bm {\tilde x}_{LS}$.
\end{theorem}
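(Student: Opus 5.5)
Following the proofs of Theorems \ref{thm3.1} and \ref{thm3.2}, I will first derive a deterministic linear recursion for $\mathbb{E}[\bm e_k]$, where $\bm e_k=\bm{\tilde x}_k-\bm x_{LS}$, and then unroll it. The RBK update on \eqref{1.1} gives
\[
\bm e_k=\bigl(I-\widetilde A_{\tau_{i_k}}^\dagger\widetilde A_{\tau_{i_k}}\bigr)\bm e_{k-1}+\widetilde A_{\tau_{i_k}}^\dagger\bigl(\bm{\tilde b}_{\tau_{i_k}}-\widetilde A_{\tau_{i_k}}\bm x_{LS}\bigr).
\]
I would write the forcing term as $\widetilde A_{\tau_{i_k}}^\dagger\bm{\tilde b}_{\tau_{i_k}}-\widetilde A_{\tau_{i_k}}^\dagger\widetilde A_{\tau_{i_k}}\bm{\tilde x}_{LS}-\widetilde A_{\tau_{i_k}}^\dagger\widetilde A_{\tau_{i_k}}\bm e_{LS}$. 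The first two pieces combine to $P_{i_k}(\bm{\tilde x}^{(i_k)}_{LS}-\bm{\tilde x}_{LS})$, where $P_{i}=\widetilde A_{\tau_i}^\dagger\widetilde A_{\tau_i}=V_{r_i}V_{r_i}^\top$ is the orthogonal projector onto $\mathcal R(\widetilde A_{\tau_i}^\top)$. This uses $\widetilde A_{\tau_i}^\dagger\bm{\tilde b}_{\tau_i}=\bm{\tilde x}^{(i)}_{LS}$.

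Taking the conditional expectation with probabilities $\|\widetilde A_{\tau_i}\|_F^2/\|\widetilde A\|_F^2$, I get $\sum_i p_iP_i=\widetilde A_\tau\widetilde A_\tau^\top=:M$. Hence
\[
\mathbb{E}[\bm e_k]=(I-M)\,\mathbb{E}[\bm e_{k-1}]+\bm g,\qquad \bm g=\sum_i p_iP_i(\bm{\tilde x}^{(i)}_{LS}-\bm{\tilde x}_{LS})-M\bm e_{LS}.
\]
Because $\widetilde A_\tau$ has full row rank, $M$ is symmetric positive definite with $\eta=\lambda_{\min}(M)>0$. Also $\lambda_{\max}(M)\le\sum_ip_i=1$, so $\|I-M\|_2=1-\eta$. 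Unrolling the recursion gives
\[
\mathbb{E}[\bm e_k]=(I-M)^k\bm e_0+\sum_{j=0}^{k-1}(I-M)^j\bm g,
\]
and therefore
\[
\|\mathbb{E}[\bm e_k]\|_2\le(1-\eta)^k\|\bm e_0\|_2+\|\bm g\|_2\sum_{j\ge0}(1-\eta)^j=(1-\eta)^k\|\bm e_0\|_2+\|\bm g\|_2/\eta.
\]
No range/null splitting is needed here, since $M$ is invertible on all of $\mathbb R^n$.

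It remains to bound $\|\bm g\|_2\le\cos\theta_\gamma\bigl(\|\bm{\tilde x}_{LS}\|_2+\|\bm{\tilde r}\|_2/\sqrt{\alpha_1}\bigr)+\|\bm e_{LS}\|_2$. The term $\|M\bm e_{LS}\|_2\le\|\bm e_{LS}\|_2$ follows from $\|M\|_2\le1$. For each block, $\|P_i\bm w_i\|_2=\cos\angle(\bm w_i,\mathcal R(\widetilde A_{\tau_i}^\top))\,\|\bm w_i\|_2\le\cos\theta_\gamma\|\bm w_i\|_2$, where $\bm w_i=\bm{\tilde x}_{LS}-\bm{\tilde x}^{(i)}_{LS}$. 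By convexity, $\|\sum_ip_iP_i\bm w_i\|_2\le\cos\theta_\gamma\max_i\|\bm w_i\|_2$.

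The main obstacle is bounding $\|\bm w_i\|_2$ by $\|\bm{\tilde x}_{LS}\|_2+\|\bm{\tilde r}\|_2/\sqrt{\alpha_1}$. I would use $\bm{\tilde b}_{\tau_i}=\widetilde A_{\tau_i}\bm{\tilde x}_{LS}+\bm{\tilde r}_{\tau_i}$, which gives
\[
\bm{\tilde x}^{(i)}_{LS}=P_i\bm{\tilde x}_{LS}+\widetilde A_{\tau_i}^\dagger\bm{\tilde r}_{\tau_i},
\]
so that $\bm w_i=(I-P_i)\bm{\tilde x}_{LS}-\widetilde A_{\tau_i}^\dagger\bm{\tilde r}_{\tau_i}$. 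Then $\|(I-P_i)\bm{\tilde x}_{LS}\|_2\le\|\bm{\tilde x}_{LS}\|_2$. The paving bound $\lambda_{\min}(\widetilde A_{\tau_i}\widetilde A_{\tau_i}^\top)\ge\alpha_1$ gives $\|\widetilde A_{\tau_i}^\dagger\|_2\le1/\sqrt{\alpha_1}$, and $\|\bm{\tilde r}_{\tau_i}\|_2\le\|\bm{\tilde r}\|_2$. Combining these by the triangle inequality yields the stated constant.

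The care needed is in keeping the $\cos\theta_\gamma$ factor outside the triangle inequality, as described above, rather than applying it piece by piece. A cruder route would be to note that $P_i\bm w_i=-\widetilde A_{\tau_i}^\dagger\bm{\tilde r}_{\tau_i}$ exactly; this gives an even simpler bound, which is also dominated by the stated one.
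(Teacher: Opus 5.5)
Your proposal is correct and is essentially the paper's proof. It uses the same mean recursion with $S=\sum_i p_i\widetilde A_{\tau_i}^\dagger\widetilde A_{\tau_i}=\widetilde A_\tau\widetilde A_\tau^\top$ and $\|I-S\|_2=1-\eta$, the same identification $\widetilde A_{\tau_i}^\dagger\bm{\tilde b}_{\tau_i}=\bm{\tilde x}^{(i)}_{LS}$, and the same split of $\bm x_{LS}-\bm{\tilde x}^{(i)}_{LS}$ into $\bm e_{LS}$ plus $\bm{\tilde x}_{LS}-\bm{\tilde x}^{(i)}_{LS}$, controlled by the cosine and the paving bound $\|\widetilde A_{\tau_i}^\dagger\|_2\le 1/\sqrt{\alpha_1}$. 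The only cosmetic difference is that you unroll the vector recursion before taking norms, while the paper unrolls the scalar inequality $\|\mathbb{E}\bm e_k\|_2\le(1-\eta)\|\mathbb{E}\bm e_{k-1}\|_2+C$.

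One caution on your closing aside: keeping $\max_i\|\widetilde A_{\tau_i}^\dagger\bm{\tilde r}_{\tau_i}\|_2=\max_i\|P_i\bm w_i\|_2$ is indeed dominated by the stated constant, but the further estimate $\|\bm{\tilde r}\|_2/\sqrt{\alpha_1}$ is not dominated in general, since it lacks the factor $\cos\theta_\gamma$.
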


\begin{proof}
At the $k$-th iteration of the RBK method, an index set $\tau_{i_k} \in \{\tau_1, \tau_2,\ldots,\tau_p\}$ is randomly selected with the probability {\small $p_{i_k}=\big\|\widetilde{A}_{\tau_{i_k}}\big\|_F^2/\big\|\tilde{A}\big\|_F^2$.} Applying the RBK method to the doubly-noisy linear system \eqref{1.1}, it follows from \eqref{rbk} that
$$\bm{\tilde x}_{k} = \bm{\tilde x}_{k-1} + \widetilde{A}_{\tau_{i_k}}^\dagger (\bm{\tilde{b}}_{\tau_{i_k}}- \widetilde{A}_{\tau_{i_k}} \bm{\tilde x}_{k-1})$$
and
\[
\thickmuskip=1mu \medmuskip=1mu 
\begin{aligned}
	  \mathbb{E}\left[\bm{\tilde x}_k - \bm{x}_{LS}\right] &= (I-\sum_{i_k=1}^{p}p_{i_k} \widetilde{A}_{\tau_{i_k}}^\dagger \widetilde{A}_{\tau_{i_k}})\mathbb{E}\left[\bm{\tilde x}_{k-1} - \bm{x}_{LS}\right] -\sum_{i_k=1}^{p}p_{i_k} \widetilde{A}_{\tau_{i_k}}^\dagger (\widetilde{A}_{\tau_{i_k}} \bm{x}_{LS}-\bm{\tilde{b}}_{\tau_{i_k}}).
\end{aligned}
\]
Let
$$
S=\sum_{i_k=1}^{p}p_{i_k}\widetilde{A}_{\tau_{i_k}}^\dagger \widetilde{A}_{\tau_{i_k}}=\widetilde{A}_{\tau}\widetilde{A}_{\tau}^\top.
$$ 
Since $I \succeq \widetilde{A}_{\tau_{i_k}}^\dagger \widetilde{A}_{\tau_{i_k}}\succeq O$, we have $I \succeq S=\sum_{i_k=1}^{p}p_{i_k}\widetilde{A}_{\tau_{i_k}}^\dagger \widetilde{A}_{\tau_{i_k}} \succeq O$. As $\widetilde{A}_{\tau}$ has full row rank, $S$ is symmetric positive definite and $1 \geq \eta=\lambda_{\min}(S)>0$, and $\eta=1$ if and only if $S=I$. We have that
{\thickmuskip=1mu \medmuskip=1mu
\begin{align}\label{eqthm3.3}
\left\|\mathbb{E}\left[\bm{\tilde x}_k - \bm{x}_{LS}\right] \right\|_2 	&\leq \left\|(I-S)\mathbb{E}\left[\bm{\tilde x}_{k-1} - \bm{x}_{LS}\right]\right\|_2 + \left\|\sum_{i_k=1}^{p}p_{i_k} \widetilde{A}_{\tau_{i_k}}^\dagger (\widetilde{A}_{\tau_{i_k}} \bm{x}_{LS}-\bm{\tilde{b}}_{\tau_{i_k}})\right\|_2 \nonumber\\
&= \left\|(I-S)\mathbb{E}\left[\bm{\tilde x}_{k-1} - \bm{x}_{LS}\right]\right\|_2+ \left\|\sum_{i_k=1}^{p}p_{i_k} \widetilde{A}_{\tau_{i_k}}^\dagger \widetilde{A}_{\tau_{i_k}} (\bm{x}_{LS}-\bm {\tilde x}^{(i_k)}_{LS})\right\|_2 \nonumber\\
&\leq  \left\|(I-S)\right\|_2\left\|\mathbb{E}\left[\bm{\tilde x}_{k-1} - \bm{x}_{LS}\right]\right\|_2+\sum_{i_k=1}^{p}p_{i_k}\left\|\widetilde{A}_{\tau_{i_k}}^\dagger \widetilde{A}_{\tau_{i_k}} (\bm{x}_{LS}-\bm {\tilde x}^{(i_k)}_{LS})\right\|_2 \nonumber\\
&=(1-\eta)\left\|\mathbb{E}\left[\bm{\tilde x}_{k-1} - \bm{x}_{LS}\right]\right\|_2+\sum_{i_k=1}^{p}p_{i_k}\left\|\widetilde{A}_{\tau_{i_k}}^\dagger \widetilde{A}_{\tau_{i_k}} (\bm{x}_{LS}-\bm {\tilde x}^{(i_k)}_{LS})\right\|_2.
\end{align}}
Denote by $\theta_{i_k}=\angle(\bm {\tilde x}_{LS}-\bm {\tilde x}^{(i_k)}_{LS}, \mathcal {R}(\widetilde{A}_{\tau_{i_k}}^\top))$, then we have
\begin{align}\label{eqthm3.3.2}
\sum_{i_k=1}^{p}p_{i_k}& \left\|\widetilde{A}_{\tau_{i_k}}^\dagger \widetilde{A}_{\tau_{i_k}} (\bm{x}_{LS}-\bm {\tilde x}^{(i_k)}_{LS})\right\|_2 \leq \max_{{i_k}\in \{1, 2,\dots,p\}}\!\left\|\widetilde{A}_{\tau_{i_k}}^\dagger \widetilde{A}_{\tau_{i_k}} (\bm{x}_{LS}-\bm{\tilde x}^{(i_k)}_{LS})\right\|_2 \nonumber\\
\leq& \max_{ {i_k}\in \{1, 2,\dots,p\}}\!\left(\left\|\widetilde{A}_{\tau_{i_k}}^\dagger \widetilde{A}_{\tau_{i_k}} (\bm{x}_{LS}-\bm {\tilde x}_{LS})\right\|_2+\left\|\widetilde{A}_{\tau_{i_k}}^\dagger \widetilde{A}_{\tau_{i_k}} (\bm {\tilde x}_{LS}-\bm{\tilde x}_{LS}^{(i_k)})\right\|_2\right) \nonumber\\
\leq&\max_{ {i_k}\in \{1, 2,\dots,p\}}\!\left(\left\|\bm{x}_{LS}-\bm {\tilde x}_{LS}\right\|_2+ \cos\theta_{i_k}\cdot\left\| \bm {\tilde x}_{LS}-\bm{\tilde x}^{(i_k)}_{LS}\right\|_2\right) \nonumber\\
=& \max_{ {i_k}\in \{1, 2,\dots,p\}}\!\left(\left\|\bm{x}_{LS}-\bm {\tilde x}_{LS}\right\|_2+ \cos\theta_{i_k}\cdot\left\|\bm {\tilde x}_{LS}-\widetilde{A}_{\tau_{i_k}}^\dagger(\widetilde{A}_{\tau_{i_k}}\bm {\tilde x}_{LS}+\bm {\tilde r}_{\tau_{i_k}})\right\|_2\right) \nonumber\\
\leq& \max_{ {i_k}\in \{1, 2,\dots,p\}}\!\left(\left\|\bm{x}_{LS}-\bm {\tilde x}_{LS}\right\|_2+ \cos\theta_{i_k}\cdot\left(\left\| \bm {\tilde x}_{LS}\right\|_2+\left\|\widetilde{A}_{\tau_{i_k}}^\dagger\bm {\tilde r}_{\tau_{i_k}}\right\|_2\right)\right) \nonumber\\
\leq& \left\|\bm{x}_{LS}-\bm {\tilde x}_{LS}\right\|_2+ \cos\theta_{\gamma}\cdot\left(\left\| \bm {\tilde x}_{LS}\right\|_2+\frac{\left\|\bm {\tilde r}\right\|_2}{\sqrt{\alpha_1}}\right),
\end{align}
where we used \eqref{1.4} and
$$
\left\|\widetilde{A}_{\tau_{i_k}}^\dagger\bm {\tilde r}_{\tau_{i_k}}\right\|_2 \leq \sigma_{\max}(\widetilde{A}_{\tau_{i_k}}^\dagger) \left\|\bm {\tilde r}_{\tau_{i_k}}\right\|_2 \leq \frac{\bm {\left\|\tilde r}\right\|_2 }{\sigma_{\min}(\widetilde{A}_{\tau_{i_k}}) } \leq \frac{\left\|\bm {\tilde r}\right\|_2}{\sqrt{\alpha_1}}.
$$

Let $\bm {e}_{LS}=\bm{x}_{LS}-\bm {\tilde x}_{LS}$, a combination of \eqref{eqthm3.3} and \eqref{eqthm3.3.2} gives
{\thickmuskip=1mu \medmuskip=1mu 
\begin{align*}
&\left\|\mathbb{E}\left[\bm{\tilde x}_k - \bm{x}_{LS}\right] \right\|_2 \leq (1-\eta)\left\|\mathbb{E}\left[\bm{\tilde x}_{k-1} - \bm{x}_{LS}\right]\right\|_2+\cos\theta_{\gamma}\cdot\left(\left\| \bm {\tilde x}_{LS}\right\|_2+\frac{\left\|\bm {\tilde r}\right\|_2}{\sqrt{\alpha_1}}\right)+\left\|\bm {e}_{LS}\right\|_2\\
	& \leq (1-\eta)^2\left\|\mathbb{E}\left[\bm{\tilde x}_{k-2} - \bm{x}_{LS}\right]\right\|_2+(1-\eta) \left[\cos\theta_{\gamma}\cdot\left(\left\| \bm {\tilde x}_{LS}\right\|_2+\frac{\left\|\bm {\tilde r}\right\|_2}{\sqrt{\alpha_1}}\right)+\left\|\bm {e}_{LS}\right\|_2 \right]\\
	& \qquad+\cos\theta_{\gamma}\cdot\left(\left\| \bm {\tilde x}_{LS}\right\|_2+\frac{\left\|\bm {\tilde r}\right\|_2}{\sqrt{\alpha_1}}\right)+ \left\|\bm {e}_{LS}\right\|_2\\
	& \leq (1-\eta)^k\left\|\left[\bm{\tilde x}_{0} - \bm{x}_{LS}\right]\right\|_2+\sum_{j=1}^{k-1}(1-\eta)^j \left[ \cos\theta_{\gamma}\cdot\left(\left\| \bm {\tilde x}_{LS}\right\|_2+\frac{\left\|\bm {\tilde r}\right\|_2}{\sqrt{\alpha_1}}\right)+\left\|\bm {e}_{LS}\right\|_2\right]\\
	& \leq (1-\eta)^k\left\|\left[\bm{\tilde x}_{0} - \bm{x}_{LS}\right]\right\|_2+\frac{\left[\cos\theta_{\gamma}\cdot\left(\left\| \bm {\tilde x}_{LS}\right\|_2+\frac{\left\|\bm {\tilde r}\right\|_2}{\sqrt{\alpha_1}}\right)+\left\|\bm {e}_{LS}\right\|_2\right]}{\eta},
\end{align*}
which completes the proof.}
\end{proof}

Now, we are ready to analyze the behavior of $\|\mathbb{E}[\bm {\tilde x}_{k}-\bm{x}_{LS}]\|_2$ of the RDBK method for solving the doubly noisy linear system \eqref{1.1}.
\begin{theorem}\label{thm3.4}
	Let $\bm{\tilde{x}}_{k}$ be the $k$-th iteration obtained from the RDBK method applied to the doubly-noisy linear system \eqref{1.1}. Let
$\bm{x}_{LS}=A^{\dagger}\bm{b}$ and
$\bm{\tilde{x}}_{LS}=\widetilde{A}^{\dagger}\bm{\tilde{b}}$
be the least-squares solutions of the noiseless system \eqref{1.2} and the doubly noisy system \eqref{1.1}, respectively. Consider the row partition \scalebox{0.9}{$\widetilde{A}=(\widetilde{A}_{\tau_1}^\top, \widetilde{A}_{\tau_2}^\top,\dots,\widetilde{A}_{\tau_p}^\top)^\top$}, with row paving $(p,\alpha_1,\alpha_2)$. Let $\widetilde{A}_{\tau_i}=U_i\Sigma_iV_i^\top$ be the full singular value decomposition of the row block $\widetilde{A}_{\tau_i}$, where {\small $V_i^\top=\underset{r_i}{\big[V_{r_i}} \bigm| \underset{n-r_i}{V_{n-r_i}\big]}$} is the corresponding partition of $V_i^{\top}$, and $r_i$ is the rank of $\widetilde{A}_{\tau_i}$. If the matrix 
{\small$$
\widetilde{A}_{\tau}=\frac{1}{\|\tilde{A} \|_F}(\|\widetilde{A}_{\tau_1}\|_F V_{r_1}, \|\widetilde{A}_{\tau_2}\|_F V_{r_2},\ldots,\|\widetilde{A}_{\tau_p}\|_F V_{r_p})\in \mathbb{R}^{n\times{\sum_{i=1}^{p} r_i}}
$$} is of full row rank, then for any initial point $\bm {\tilde x}_0\in \mathbb{R}^{n}$ and any iteration number
	$k\in\mathbb{N}^{+}$, we have
\begin{equation*}
	\|\mathbb{E} (\bm {\tilde x}_{k}\!-\!\bm {x}_{LS})\|_2\leq \!(1-\eta)^k\! \left\|\bm{\tilde x}_{0} \!-\! \bm{x}_{LS}\right\|_2\!+\!\frac{ \left[\cos\theta_{\gamma}\cdot\left(\left\| \bm {\tilde x}_{LS}\right\|_2+\frac{\left\|\bm {\tilde r}\right\|_2}{\sqrt{\alpha_1}}\right)\!+\!\left\|\bm {e}_{LS}\right\|_2\!+\!\frac{\left\|\bm{\tilde b}\right\|_2}{\sqrt{\alpha_1}} \right]}{\eta},
\end{equation*}
where 
\begin{equation}
\cos \theta_{\gamma}=\max_{\substack{ {i_k}\in \{1, 2,\dots,p\}}}\! \cos\angle(\bm {\tilde x}_{LS}-\bm {\tilde x}_{LS}^{(i_k)}, \mathcal {R}(\widetilde{A}_{\tau_{i_k}}^\top)),
\end{equation}
and $\eta=\lambda_{\min}(\widetilde{A}_{\tau}\widetilde{A}_{\tau}^\top)$, $\bm {\tilde r}=\widetilde {\bm {b}}-\widetilde{A} \bm {\tilde x}_{LS}$, $\bm {e}_{LS}=\bm{x}_{LS}-\bm {\tilde x}_{LS}$.
\end{theorem}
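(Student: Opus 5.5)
The plan is to mirror the proof of Theorem~\ref{thm3.3}, treating the $\bm{z}$-correction in \eqref{rdbk} as one additional perturbation term that is bounded uniformly in $k$.

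\textbf{Step 1: expected error recursion.} Write the RDBK step for \eqref{1.1} as
\[
\bm{\tilde x}_k-\bm{x}_{LS}=(I-\widetilde{A}_{\tau_{i_k}}^\dagger\widetilde{A}_{\tau_{i_k}})(\bm{\tilde x}_{k-1}-\bm{x}_{LS})-\widetilde{A}_{\tau_{i_k}}^\dagger(\widetilde{A}_{\tau_{i_k}}\bm{x}_{LS}-\bm{\tilde b}_{\tau_{i_k}})-\widetilde{A}_{\tau_{i_k}}^\dagger(\bm{\tilde z}_k)_{\tau_{i_k}}.
\]
Take the conditional expectation over the row block first, using the tower property through $\mathbb{E}^i_{k-1}$ as in the preliminaries; this conditions on the column block, so $\bm{\tilde z}_k$ is fixed. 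Then take full expectations. With $S=\widetilde{A}_\tau\widetilde{A}_\tau^\top$ this gives
\[
\mathbb{E}[\bm e_k]=(I-S)\mathbb{E}[\bm e_{k-1}]-\sum_{i}p_i\widetilde{A}_{\tau_i}^\dagger(\widetilde{A}_{\tau_i}\bm{x}_{LS}-\bm{\tilde b}_{\tau_i})-\sum_i p_i\widetilde{A}_{\tau_i}^\dagger\,\mathbb{E}[(\bm{\tilde z}_k)_{\tau_i}].
\]
Here I use that the row choice is independent of $\bm{\tilde z}_k$. The full row rank hypothesis gives $\|I-S\|_2=1-\eta$ with $0<\eta\le 1$, exactly as in Theorem~\ref{thm3.3}.

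\textbf{Step 2: bound the first two contributions.} The second sum is the same term that appears in \eqref{eqthm3.3}. Apply \eqref{eqthm3.3.2} verbatim to bound it by
\[
\|\bm e_{LS}\|_2+\cos\theta_\gamma\bigl(\|\bm{\tilde x}_{LS}\|_2+\|\bm{\tilde r}\|_2/\sqrt{\alpha_1}\bigr).
\]

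\textbf{Step 3: the new $\bm{z}$-term, the main obstacle.} I need a bound on this term that is uniform in $k$, so that the geometric summation still produces a factor $1/\eta$. I would first move the norm inside both the expectation and the sum, giving
\[
\Bigl\|\sum_i p_i\widetilde{A}_{\tau_i}^\dagger\,\mathbb{E}[(\bm{\tilde z}_k)_{\tau_i}]\Bigr\|_2\le \max_i\|\widetilde{A}_{\tau_i}^\dagger\|_2\,\mathbb{E}\|\bm{\tilde z}_k\|_2\le \frac{1}{\sqrt{\alpha_1}}\,\mathbb{E}\|\bm{\tilde z}_k\|_2 .
\]
This uses the paving bound \eqref{1.4} and $\|(\bm{\tilde z}_k)_{\tau_i}\|_2\le\|\bm{\tilde z}_k\|_2$. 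Each $\bm z$-update multiplies by an orthogonal projector $I-\widetilde{A}_{\nu}\widetilde{A}_{\nu}^\dagger$, so $\|\bm{\tilde z}_k\|_2\le\|\bm{\tilde z}_{k-1}\|_2\le\cdots\le\|\bm{\tilde z}_0\|_2=\|\bm{\tilde b}\|_2$ deterministically. The term is therefore at most $\|\bm{\tilde b}\|_2/\sqrt{\alpha_1}$. The delicate point is the order of conditioning: $\bm{\tilde z}_k$ is updated before $\bm{\tilde x}_k$ at the same step, so the row expectation must be taken conditional on $j_k$ (that is, via $\mathbb{E}^i_{k-1}$). A crude norm bound deliberately sacrifices the decay of $\bm z$, but it matches the stated horizon. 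An alternative via $\mathbb{E}[\bm{\tilde z}_k]=(I-\widetilde{A}\widetilde{A}^\top\text{-type operator})^k\bm{\tilde b}$, as in Theorem~\ref{thm3.2}, would give decay but is not needed.

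\textbf{Step 4: unroll the recursion.} Combining the three bounds gives
\[
\|\mathbb{E}\bm e_k\|_2\le(1-\eta)\|\mathbb{E}\bm e_{k-1}\|_2+c,
\qquad c=\cos\theta_\gamma\bigl(\|\bm{\tilde x}_{LS}\|_2+\|\bm{\tilde r}\|_2/\sqrt{\alpha_1}\bigr)+\|\bm e_{LS}\|_2+\|\bm{\tilde b}\|_2/\sqrt{\alpha_1}.
\]
Iterating down to $k=0$ and summing the geometric series $\sum_j(1-\eta)^j\le 1/\eta$ yields
\[
\|\mathbb{E}\bm e_k\|_2\le(1-\eta)^k\|\bm{\tilde x}_0-\bm{x}_{LS}\|_2+c/\eta,
\]
which is the claimed bound.
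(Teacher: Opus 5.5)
Your proposal is correct and follows essentially the same route as the paper: the same conditional-expectation recursion through $\mathbb{E}^i_{k-1}$, verbatim reuse of \eqref{eqthm3.3.2}, a $k$-uniform bound $\|\bm{\tilde b}\|_2/\sqrt{\alpha_1}$ on the $\bm z$-term, and geometric summation. The only difference is in bounding the $\bm z$-term, and it is cosmetic: the paper writes it as $M\,\mathbb{E}[\bm{\tilde z}_k]/\|\widetilde{A}\|_F^2$, uses the mean recursion of $\bm{\tilde z}$ to get $\|\mathbb{E}[\bm{\tilde z}_k]\|_2\le\|\bm{\tilde z}_0\|_2$, and proves $\|M\|_2\le\|\widetilde{A}\|_F^2/\sqrt{\alpha_1}$ by Cauchy--Schwarz over the blocks, whereas your pathwise projector contraction combined with the convex weights $p_i$ reaches the same constant a little more directly.
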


\begin{proof}
Let $\widetilde{A}=(\widetilde{A}_{\nu_1}, \widetilde{A}_{\nu_2},\dots,\widetilde{A}_{\nu_q})$ be a column partition of $\widetilde{A}$. Applying the RDBK method to the doubly-noisy linear system \eqref{1.1}, it follows from \eqref{rdbk} that
\begin{align}\label{eqn410}
\left\|\mathbb{E}\left[\bm{\tilde z}_{k}\right]\right\|_2	&=\left\|\left(I-\sum_{{i_k}=1}^{q}\frac{\left\|\widetilde{A}_{\nu_{i_k}}\right\|_F^2}{\| \tilde{A} \|_F^2}\widetilde{A}_{\nu_{i_k}}\widetilde{A}_{\nu_{i_k}}^\dagger\right) \mathbb{E}\left[\bm{\tilde z}_{k-1}\right]\right\|_2\nonumber\\
	& \leq \left\|I-\sum_{{i_k}=1}^{q}\frac{\left\|\widetilde{A}_{\nu_{i_k}}\right\|_F^2}{\| \tilde{A} \|_F^2}\widetilde{A}_{\nu_{i_k}}\widetilde{A}_{\nu_{i_k}}^\dagger\right\|_2\left\|\mathbb{E}\left[\bm{\tilde z}_{k-1}\right]\right\|_2\nonumber\\
	& \leq \left\|I-\frac{\widetilde{A}\widetilde{A}^\top}{\|\widetilde{A}\|^2_F}\right\|_2\left\|\mathbb{E}\left[\bm{\tilde z}_{k-1}\right]\right\|_2\nonumber\\
	& \leq\left\|\mathbb{E}\left[\bm{\tilde z}_{k-1}\right]\right\|_2\leq\cdots \nonumber\\
	& \leq\left\|\bm{\tilde z}_{0}\right\|_2,
\end{align}
where we used \scalebox{0.9}{$\sum_{{i_k}=1}^{q}\frac{\left\|\widetilde{A}_{\nu_{i_k}}\right\|_F^2}{\| \tilde{A} \|_F^2}\widetilde{A}_{\nu_{i_k}}\widetilde{A}_{\nu_{i_k}}^\dagger \succcurlyeq{  \frac{\widetilde{A}\widetilde{A}^\top}{\|\widetilde{A}\|^2_F}}$} in the third step.

At the $k$-th iteration of the RDBK method, an index $\tau_{i_k} \in \{\tau_1, \tau_2,\ldots,\tau_p\}$ is randomly selected with the probability {\small $p_{i_k}=\left\|\widetilde{A}_{\tau_{i_k}}\right\|_F^2 / \| \tilde{A} \|_F^2$}. Denote by $S=\widetilde{A}_{\tau}\widetilde{A}_{\tau}^\top$, by 
 $$
 M=(\|\widetilde{A}_{\tau_1}\|_F^2\widetilde{A}_{\tau_1}^{\dagger}, \|\widetilde{A}_{\tau_2}\|_F^2 \widetilde{A}_{\tau_2}^{\dagger},\ldots,\|\widetilde{A}_{\tau_p}\|_F^2 \widetilde{A}_{\tau_p}^{\dagger}),
 $$
 and by 
 $$
 \widetilde{A}_{\tau}=\frac{1}{\|\tilde{A} \|_F}(\|\widetilde{A}_{\tau_1}\|_F V_{r_1}, \|\widetilde{A}_{\tau_2}\|_F V_{r_2},\ldots,\|\widetilde{A}_{\tau_p}\|_F V_{r_p}),
 $$
 then we have
{\thickmuskip=1mu \medmuskip=1mu 
\begin{align*}
&\mathbb{E}_{k-1}\left[\bm{\tilde x}_{k} - \bm{x}_{LS}\right]	=\mathbb{E}_{k-1}\left[\mathbb{E}_{k-1}^i\left[\bm{\tilde x}_{k} - \bm{x}_{LS}\right]\right]\\
	&=\mathbb{E}_{k-1}\left[\mathbb{E}_{k-1}^i\left[\bm{\tilde x}_{k-1}- \bm{x}_{LS} + \widetilde{A}_{\tau_{i_k}}^\dagger (\bm{\tilde{b}}_{\tau_{i_k}}-(\bm{\tilde z}_k)_{ {\tau_{i_k}}}-\widetilde{A}_{\tau_{i_k}} \bm{\tilde x}_{k-1})\right]\right]\\
	&= \mathbb{E}_{k-1}\left[(I-\sum_{{i_k}=1}^{p}p_{i_k}\widetilde{A}_{\tau_{i_k}}^\dagger\widetilde{A}_{\tau_{i_k}})(\bm{\tilde x}_{k-1}- \bm{x}_{LS})+\sum_{{i_k}=1}^{p}p_{i_k}\widetilde{A}_{\tau_{i_k}}^\dagger (\bm{\tilde{b}}_{\tau_{i_k}}-(\bm{\tilde z}_k)_{ {\tau_{i_k}}}- \widetilde{A}_{\tau_{i_k}}\bm{x}_{LS})\right] \\
	&= \mathbb{E}_{k-1}\left[(I-\sum_{{i_k}=1}^{p}p_{i_k}V_{r_{i_k}}V_{r_{i_k}}^\top)(\bm{\tilde x}_{k-1}- \bm{x}_{LS})+\sum_{{i_k}=1}^{p}p_{i_k}\widetilde{A}_{\tau_{i_k}}^\dagger (\bm{\tilde{b}_{\tau_{i_k}}}-(\bm{\tilde z}_k)_{ {\tau_{i_k}}}- \widetilde{A}_{\tau_{i_k}} \bm{x}_{LS})\right] \\
	&= \left(I-S\right)(\bm{\tilde x}_{k-1}- \bm{x}_{LS})+\sum_{{i_k}=1}^{p}p_{i_k}\widetilde{A}_{\tau_{i_k}}^\dagger (\bm{\tilde{b}}_{\tau_{i_k}}- \widetilde{A}_{\tau_{i_k}} \bm{x}_{LS})-\mathbb{E}_{k-1}\left[\sum_{{i_k}=1}^{p}p_{i_k}\widetilde{A}_{\tau_{i_k}}^\dagger(\bm{\tilde z}_k)_{{\tau_{i_k}}}\right] \\
	&=\left(I-S\right)(\bm{\tilde x}_{k-1}- \bm{x}_{LS})+\sum_{{i_k}=1}^{p}p_{i_k}\widetilde{A}_{\tau_{i_k}}^\dagger (\bm{\tilde{b}_{\tau_{i_k}}}- \widetilde{A}_{\tau_{i_k}} \bm{x}_{LS}) -\frac{\mathbb{E}_{k-1}\left[M\bm{\tilde z}_k\right]}{\| \tilde{A} \|_F^2}.
\end{align*}
}
From the law of total expectation, we obtain
\begin{align*}
	\mathbb{E}\left[\bm{\tilde x}_{k} -\bm{x}_{LS}\right]= \left(I-S\right)\mathbb{E}\left[\bm{\tilde x}_{k-1} -\bm{x}_{LS}\right]+\sum_{{i_k}=1}^{p}p_{i_k}\widetilde{A}_{\tau_{i_k}}^\dagger (\bm{\tilde{b}}_{\tau_{i_k}}- \widetilde{A}_{\tau_{i_k}} \bm{x}_{LS})-\frac{M\cdot\mathbb{E}\left[\bm{\tilde z_k}\right]}{\| \tilde{A} \|_F^2} .
\end{align*}
Since $I \succeq \widetilde{A}_{\tau_{i_k}}^\dagger \widetilde{A}_{\tau_{i_k}}\succeq O$, we have $I \succeq S=\sum_{i_k=1}^{p}p_{i_k}\widetilde{A}_{\tau_{i_k}}^\dagger \widetilde{A}_{\tau_{i_k}} \succeq O$. As $\widetilde{A}_{\tau}$ has full row rank, $S$ is symmetric positive definite and $1 \geq \eta=\lambda_{\min}(S)>0$, and $\eta=1$ if and only if $S=I$. Taking norms in the above equation, we get from \eqref{eqn410} and \eqref{eqthm3.3.2} that
{\thickmuskip=1mu \medmuskip=0.8mu 
\begin{align*}
&\left\|\mathbb{E}\left[\bm{\tilde x}_{k}\!-\!\bm{x}_{LS}\right]\right\|_2\leq  \left\|\left(I\!-\!S\right)\mathbb{E}\left[\bm{\tilde x}_{k-1} \!-\!\bm{x}_{LS}\right]\right\|_2 \!+\! \left\|\sum_{{i_k}=1}^{p}p_{i_k}\widetilde{A}_{\tau_{i_k}}^\dagger (\bm{\tilde{b}}_{\tau_{i_k}}\!-\!\widetilde{A}_{\tau_{i_k}}\bm{x}_{LS})\right\|_2\!+\! \frac{\left\|M\mathbb{E}\left[\bm{\tilde z}_k\right]\right\|_2}{\| \tilde{A}\|_F^2} \\
&\leq  (1-\eta)\left\|\mathbb{E}\left[\bm{\tilde x}_{k-1} - \bm{x}_{LS}\right]\right\|_2+\sum_{i_k=1}^{p}p_{i_k} \left\|\widetilde{A}_{\tau_{i_k}}^\dagger \widetilde{A}_{\tau_{i_k}} (\bm {\tilde x}^{(i_k)}_{LS}-\bm{x}_{LS})\right\|_2+\frac{\left\|M\right\|_2\left\|\bm{\tilde z}_{0}\right\|_2}{\| \tilde{A} \|_F^2}   \\
& \leq {\small (1-\eta)\left\|\mathbb{E}\left[\bm{\tilde x}_{k-1} - \bm{x}_{LS}\right]\right\|_2+\cos\theta_{\gamma}\cdot\left(\left\| \bm {\tilde x}_{LS}\right\|_2+\frac{\left\|\bm {\tilde r}\right\|_2}{\sqrt{\alpha_1}}\right)+\left\|\bm{x}_{LS}-\bm {\tilde x}_{LS}\right\|_2+\frac{\left\|M\right\|_2\left\|\bm{\tilde z}_{0}\right\|_2}{\| \tilde{A} \|_F^2}}.
\end{align*}}

Moreover, 
{\thickmuskip=1mu \medmuskip=1mu 
\begin{align*}
\frac{\left\|M\right\|_2\left\|\bm{\tilde z}_{0}\right\|_2}{\| \tilde{A} \|_F^2}&=\frac{\left\|\bm{\tilde z}_{0}\right\|_2}{\| \tilde{A} \|_F^2} \max_{\|\bm x\|_2=1}\!\left\|\sum_{i=1}^{p}\|\widetilde{A}_{\tau_i}\|_F^2\widetilde{A}_{\tau_i}^{\dagger} \bm{ x}_{\tau_i}\right\|_2\leq \frac{\left\|\bm{\tilde z}_{0}\right\|_2}{\| \tilde{A} \|_F^2} \max_{\|\bm x\|_2=1}\!\sum_{i=1}^{p} \left\|\|\widetilde{A}_{\tau_i}\|_F^2\widetilde{A}_{\tau_i}^{\dagger} \bm{ x}_{\tau_i}\right\|_2 \nonumber\\
&\leq \frac{\left\|\bm{\tilde z}_{0}\right\|_2}{\| \tilde{A} \|_F^2} \max_{\|\bm x\|_2=1}\!\left(\sum_{i=1}^{p} \left\|\|\widetilde{A}_{\tau_i}\|_F^2\widetilde{A}_{\tau_i}^{\dagger}\right\|_2 \cdot\left\|\bm{ x}_{\tau_i}\right\|_2 \right)\nonumber\\
& \leq \frac{\left\|\bm{\tilde z}_{0}\right\|_2}{\| \tilde{A} \|_F^2} \max_{\|\bm x\|_2=1}\!\left(\sqrt{\sum_{i=1}^{p} \left\|\|\widetilde{A}_{\tau_i}\|_F^2\widetilde{A}_{\tau_i}^{\dagger}\right\|_2^2}\cdot \sqrt{\sum_{i=1}^{p}\left\|\bm{ x}_{\tau_i}\right\|_2^2}\right)\nonumber\\
&=\frac{\left\|\bm{\tilde z}_{0}\right\|_2}{\| \tilde{A} \|_F^2} \sqrt{\sum_{i=1}^{p} \left\|\|\widetilde{A}_{\tau_i}\|_F^2\widetilde{A}_{\tau_i}^{\dagger}\right\|_2^2}\nonumber\\
&\leq \frac{\left\|\bm{\tilde z}_{0}\right\|_2}{\| \tilde{A} \|_F^2} \sqrt{\frac{\sum_{i=1}^{p} \|\widetilde{A}_{\tau_i}\|_F^4}{\alpha_1}}\leq\frac{\left\|\bm{\tilde z}_{0}\right\|_2}{\sqrt{\alpha_1}},
\end{align*}}
in which we used \eqref{1.4}.
Combining the above results altogether, we arrive at
{\small
{\thickmuskip=1mu \medmuskip=1mu 
\begin{align*}
&\left\|\mathbb{E}\left[\bm{\tilde x}_{k}\!-\!\bm{x}_{LS}\right]\right\|_2 \leq (1-\eta)\left\|\mathbb{E}\left[\bm{\tilde x}_{k-1} - \bm{x}_{LS}\right]\right\|_2+\cos\theta_{\gamma}\cdot\left(\left\| \bm {\tilde x}_{LS}\right\|_2+\frac{\left\|\bm {\tilde r}\right\|_2}{\sqrt{\alpha_1}}\right)+\left\|\bm{x}_{LS}-\bm {\tilde x}_{LS}\right\|_2+\frac{\left\|\bm{\tilde z}_{0}\right\|_2}{\sqrt{\alpha_1}}  \\
& \leq  (1-\eta)^2 \left\|\mathbb{E}\left[\bm{\tilde x}_{k-2} - \bm{x}_{LS}\right]\right\|_2\!+\cos\theta_{\gamma}\cdot\left(\left\| \bm {\tilde x}_{LS}\right\|_2+\frac{\left\|\bm {\tilde r}\right\|_2}{\sqrt{\alpha_1}}\right)+\left\|\bm{x}_{LS}-\bm {\tilde x}_{LS}\right\|_2+\frac{\left\|\bm{\tilde z}_{0}\right\|_2}{\sqrt{\alpha_1}} \\
&\quad+\!(1-\eta)\left[\cos\theta_{\gamma}\cdot\left(\left\| \bm {\tilde x}_{LS}\right\|_2\!+\!\frac{\left\|\bm {\tilde r}\right\|_2}{\sqrt{\alpha_1}}\right)\!+\!\left\|\bm{x}_{LS}-\bm {\tilde x}_{LS}\right\|_2\!+\!\frac{\left\|\bm{\tilde z}_{0}\right\|_2}{\sqrt{\alpha_1}} \right]  \\
&\leq (1-\eta)^k \left\|\mathbb{E}\left[\bm{\tilde x}_{0}\!-\! \bm{x}_{LS}\right]\right\|_2\!+\!\sum_{j=0}^{k-1}(1\!-\!\eta)^j \left[\cos\theta_{\gamma}\cdot\left(\left\| \bm {\tilde x}_{LS}\right\|_2\!+\!\frac{\left\|\bm {\tilde r}\right\|_2}{\sqrt{\alpha_1}}\right)\!+\!\left\|\bm{x}_{LS}-\bm {\tilde x}_{LS}\right\|_2\!+\!\frac{\left\|\bm{\tilde z}_{0}\right\|_2}{\sqrt{\alpha_1}} \right] \\
&\leq (1-\eta)^k \left\|\mathbb{E}\left[\bm{\tilde x}_{0} - \bm{x}_{LS}\right]\right\|_2\!+\!\frac{\left[\cos\theta_{\gamma}\cdot\left(\left\| \bm {\tilde x}_{LS}\right\|_2+\frac{\left\|\bm {\tilde r}\right\|_2}{\sqrt{\alpha_1}}\right)+\left\|\bm{x}_{LS}-\bm {\tilde x}_{LS}\right\|_2+\frac{\left\|\bm{\tilde z}_{0}\right\|_2}{\sqrt{\alpha_1}} \right]}{\eta},
\end{align*}}
}
which completes the proof.
\end{proof}

\begin{remark}\label{rema3.2}
{\it Two remarks are in order. Firstly, Theorems \ref{thm3.3} and \ref{thm3.4} demonstrate that both RBK and RDBK method converge to a ball centered at $\bm{x}_{LS}$ when solving the doubly noisy linear system \eqref{1.1}, provided that
$$
\widetilde{A}_{\tau}=\frac{1}{\|\widetilde{A}\|_F^2}(\|\widetilde{A}_{\tau_1}\|_F V_{r_1},\|\widetilde{A}_{\tau_2}\|_F V_{r_2},\ldots,\|\widetilde{A}_{\tau_p}\|_F V_{r_p})
$$
is of full row rank, and the condition is mild when $m \gg n$.
The convergence rates of RBK and RDBK are comparable, which are determined by \scalebox{0.9}{$\eta=\lambda_{\min}(\widetilde{A}_{\tau}\widetilde{A}_{\tau}^{\top})$}. Secondly, the convergence horizon of the RBK and RDBK methods is influenced by the angle \scalebox{0.9}{$\cos\angle(\bm {\tilde x}_{LS}-\bm {\tilde x}^{(i_k)}_{LS}, \mathcal {R}(\widetilde{A}_{\tau_{i_k}}^\top))$}, the scalars $\eta$ and $\alpha_1$, as well as $\left\|\bm{x}_{LS}-\bm {\tilde x}_{LS}\right\|_2$. However, the convergence horizon of the RDBK method contains an additional factor $\frac{\left\|\bm{\tilde z}_{0}\right\|_2}{\sqrt{\alpha_1}}$, which implies that the convergence horizon of the RDBK method is larger than that of the RBK method.
Thus, the RBK method will work better than the RDBK method when solving the doubly noisy linear system \eqref{1.1}. One refers to Experiment \ref{sec5.4} for more details.
}

\end{remark}

%
\section{On convergence of the sketch-and-project method for solving doubly noisy linear systems}\label{sec6}
In this section, we analyze the convergence behavior of the more general sketch-and-project method applied to the doubly noisy linear system.
Similarly, we consider the expectation of the error norm $\mathbb{E}\bigl[\|\bm{\tilde{x}}_k-\bm{x}_{LS}\|_B^2\bigr]$
as well as the norm of the expected error $\bigl\|\mathbb{E}[\bm{\tilde{x}}_k-\bm{x}_{LS}]\bigr\|_B^2$.
We first need the following lemma, whose proof is straightforward and thus is omitted.
\begin{lemma}\label{lem51}
Let $\widehat{G}\in\mathbb{R}^{n\times n}$ be a nonzero symmetric
matrix satisfying $O \preceq \widehat{G} \preceq I$ with $r=\operatorname{rank}(\widehat{G})$.
Let $B\in\mathbb{R}^{n\times n}$ be a symmetric positive definite matrix, and let $G=B^{-\frac{1}{2}}\widehat{G}B^{\frac{1}{2}}$.
Denote by $\lambda_r=\lambda_{\min}^{+}(\widehat{G})$ the smallest nonzero eigenvalue of $\widehat{G}$ and by $\lambda_1=\lambda_{\max}(\widehat{G})$ its largest eigenvalue. Then, for every $\bm{v}\in\mathcal{R}(G)$ and every positive integer
$k$, the following inequalities hold
\begin{equation}\label{b1}
\lambda_r\|\bm{v}\|_B^2
\leq
\bm{v}^{\top}BG\bm{v}
\leq
\lambda_1\|\bm{v}\|_B^2,
\end{equation}
\begin{equation}\label{b2}
\left\|\left(I-G\right)^k \bm {v}\right\|_B \leq \left(1-\lambda_r\right)^k \left\|\bm {v}\right\|_B,
\end{equation}
\begin{equation}\label{b3}
\left\|\left[I-\left(I-G\right)^k \right]\bm {v}\right\|_B \leq \left[1-\left(1-\lambda_1\right)^k\right] \left\|\bm {v}\right\|_B,
\end{equation}
and
\begin{equation}\label{b4}
\left\|\!\sum_{j=0}^{k-1}\left(I-G\right)^j \bm {v}\right\|_B \leq \frac{\left\|\bm {v}\right\|_B}{\lambda_r},
\end{equation}
\end{lemma}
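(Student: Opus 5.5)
The plan is to reduce everything to the Euclidean, symmetric case by the change of variables $\bm w=B^{\frac12}\bm v$, and then read off all four inequalities from the spectral decomposition of $\widehat G$.

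First I would record the similarity structure. Since $G=B^{-\frac12}\widehat G B^{\frac12}$, for every $j\ge 0$ we have
\[
(I-G)^j=B^{-\frac12}(I-\widehat G)^jB^{\frac12}
\quad\text{and}\quad
BG=B^{\frac12}\widehat G B^{\frac12}.
\]
Next I would identify the ranges. Because $B^{\frac12}$ is invertible, $\mathcal R(G)=B^{-\frac12}\mathcal R(\widehat G)$. Hence $\bm v\in\mathcal R(G)$ if and only if $\bm w:=B^{\frac12}\bm v\in\mathcal R(\widehat G)$. Moreover, $\|\bm v\|_B^2=\bm v^\top B\bm v=\|\bm w\|_2^2$, and more generally $\|B^{-\frac12}\bm y\|_B=\|\bm y\|_2$. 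This yields the following identities:
\[
\bm v^\top BG\bm v=\bm w^\top\widehat G\bm w,
\qquad
\|(I-G)^k\bm v\|_B=\|(I-\widehat G)^k\bm w\|_2,
\]
and analogously for $I-(I-G)^k$ and for $\sum_{j=0}^{k-1}(I-G)^j$. Each of \eqref{b1}--\eqref{b4} is therefore equivalent to the same statement for the symmetric matrix $\widehat G$, the Euclidean norm, and $\bm w\in\mathcal R(\widehat G)$.

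For the symmetric case I would write $\widehat G=\sum_{i=1}^r\lambda_i\bm q_i\bm q_i^\top$ with orthonormal $\bm q_i$ and $1\ge\lambda_1\ge\cdots\ge\lambda_r>0$, using the hypothesis $O\preceq\widehat G\preceq I$. Since $\bm w\in\mathcal R(\widehat G)=\mathrm{span}\{\bm q_1,\dots,\bm q_r\}$, we can write $\bm w=\sum_{i=1}^r c_i\bm q_i$. Every polynomial $p$ in $\widehat G$ then acts diagonally, $p(\widehat G)\bm w=\sum_i p(\lambda_i)c_i\bm q_i$, so
\[
\|p(\widehat G)\bm w\|_2\le \max_{1\le i\le r}|p(\lambda_i)|\,\|\bm w\|_2 .
\]
The four inequalities follow from scalar facts on $\lambda\in[\lambda_r,\lambda_1]\subseteq(0,1]$:
\begin{itemize}
\item For \eqref{b1}, $\bm w^\top\widehat G\bm w=\sum_i\lambda_ic_i^2$ lies between $\lambda_r\|\bm w\|_2^2$ and $\lambda_1\|\bm w\|_2^2$.
\item For \eqref{b2}, $0\le 1-\lambda\le 1-\lambda_r$, so $(1-\lambda)^k\le(1-\lambda_r)^k$.
\item For \eqref{b3}, the map $\lambda\mapsto 1-(1-\lambda)^k$ is nonnegative and nondecreasing on $[0,1]$, so it is at most $1-(1-\lambda_1)^k$.
\item For \eqref{b4}, $\sum_{j=0}^{k-1}(1-\lambda)^j=\frac{1-(1-\lambda)^k}{\lambda}\le\frac1\lambda\le\frac1{\lambda_r}$.
\end{itemize}

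The main obstacle is making sure the restriction to $\mathcal R(G)$ transfers correctly. In particular, the zero eigenvalues of $\widehat G$ must be genuinely excluded, otherwise \eqref{b2} and \eqref{b4} fail on null directions where $(1-0)^k=1$ and the geometric sum equals $k$. I would also need the sign conditions $1-\lambda\in[0,1)$, which hold because $\widehat G\preceq I$, so that absolute values cause no trouble. Both points are handled by the identity $\mathcal R(G)=B^{-\frac12}\mathcal R(\widehat G)$ and the eigenvalue range above.
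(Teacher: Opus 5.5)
Your proof is correct. The paper omits this proof as straightforward, and your argument is the natural way to supply it: substitute $\bm w=B^{\frac12}\bm v$, which turns $\|\cdot\|_B$ into $\|\cdot\|_2$, turns $G$ into the symmetric $\widehat G$, and turns $\mathcal R(G)$ into $\mathcal R(\widehat G)$, then apply spectral calculus on the nonzero eigenvalues $\lambda_i\in[\lambda_r,\lambda_1]\subseteq(0,1]$. This mirrors the Euclidean Lemma~\ref{lem3.1} and the paper's later observation that $\widehat G=B^{\frac12}GB^{-\frac12}$ is symmetric, and you correctly identify that the restriction $\bm v\in\mathcal R(G)$ is what makes the lower bound in \eqref{b1}, \eqref{b2} and \eqref{b4} hold, whereas \eqref{b3} in fact holds for every $\bm v$.
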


Now, we are ready to analyze the behavior of $\mathbb{E}\|\bm {\tilde x}_{k}-\bm{x}_{LS}\|_2$ of the sketch-and-project method for solving the doubly noisy linear system \eqref{1.1}. The sketch-and-project method \cite{Gower2015} involves two parameters: a symmetric positive definite matrix $B$ defining the $B$-inner product \eqref{Bnorm}, and a random sketch matrix $S_k$ sampled i.i.d at each iteration.
\begin{theorem}\label{th51}
Let $\bm {\tilde x}_{k}$ be the $k$-th iterate of the sketch-and-project method applied to the doubly noisy linear system \eqref{1.1}. Let $\bm{x}_{LS}=A^{\dagger}\bm{b}$ be the least-squares solution of the noiseless linear system \eqref{1.2}. Set $P_k=B^{-1}\widetilde{A}^\top H_k \widetilde{A}$ and $G=\mathbb{E}[P_k]$, where \scalebox{0.9}{$H_k=S_k\left(S_k^{\top}\widetilde{A}B^{-1}\widetilde{A}^{\top}S_k\right)^{\dagger}S_k^{\top}$}. For any initial point $\bm {\tilde x}_{0}\in \mathbb{R}^{n}$ and iteration number $k \in \mathbb{N}^+$, we have
\begin{align*}
\mathbb{E}\|\bm {\tilde x}_{k}-\bm {x}_{LS}\|_B^2\leq (1-\lambda_r)^{k}\left\|(\bm {\tilde x}_0-\bm {x}_{LS})_{\mathcal {R}(G)}\right\|_B^2&+\big\|(\bm {\tilde x}_0-\bm {x}_{LS})_{\mathcal {N}(G)}\big\|_B^2+\frac{\bm {r}^\top \mathbb{E}[H_k] \bm {r}}{\lambda_r},\nonumber
\end{align*}
where $\lambda_r$ is the smallest nonzero eigenvalue of the matrix $G$, and $\bm{r}=\bm{\tilde b}-\widetilde{A}\bm { x}_{LS}$.

\end{theorem}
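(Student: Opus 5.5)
We will follow the one-step recursion used in the proof of Theorem~\ref{thm3.5}, now in the $B$-norm, and then handle the range/null-space split with respect to $G$ rather than $\widetilde{A}^\top$.

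\textbf{Step 1: error recursion.} Set $\bm e_k=\bm{\tilde x}_k-\bm x_{LS}$ and $\bm r=\bm{\tilde b}-\widetilde{A}\bm x_{LS}$. From \eqref{SP} applied to \eqref{1.1}, writing $\widetilde{A}\bm{\tilde x}_{k-1}-\bm{\tilde b}=\widetilde{A}\bm e_{k-1}-\bm r$, we get
\[
\bm e_k=(I-P_k)\bm e_{k-1}+B^{-1}\widetilde{A}^\top H_k\bm r .
\]

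\textbf{Step 2: algebraic identities.} The pseudoinverse identity $M^\dagger MM^\dagger=M^\dagger$ with $M=S_k^\top\widetilde{A}B^{-1}\widetilde{A}^\top S_k$ gives $H_k\widetilde{A}B^{-1}\widetilde{A}^\top H_k=H_k$. This yields three facts:
\begin{itemize}
\item $P_k^2=P_k$ and $BP_k=\widetilde{A}^\top H_k\widetilde{A}$ is symmetric, so $P_k$ is a $B$-orthogonal projector.
\item $P_k^\top\widetilde{A}^\top H_k=\widetilde{A}^\top H_k$, so the cross term $2\bm e_{k-1}^\top(I-P_k)^\top\widetilde{A}^\top H_k\bm r$ vanishes.
\item $\|B^{-1}\widetilde{A}^\top H_k\bm r\|_B^2=\bm r^\top H_k\bm r$.
\end{itemize}
Hence
\[
\mathbb{E}_{k-1}\|\bm e_k\|_B^2=\|\bm e_{k-1}\|_B^2-\bm e_{k-1}^\top BG\bm e_{k-1}+\bm r^\top\mathbb{E}[H_k]\bm r .
\]

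\textbf{Step 3: splitting along $G$ (expected main obstacle).} Since $BG=\mathbb{E}[BP_k]$ is symmetric, $G$ is $B$-self-adjoint, so $\mathcal R(G)$ and $\mathcal N(G)$ are $B$-orthogonal complements. Also $\widehat G=B^{1/2}GB^{-1/2}$ satisfies $O\preceq\widehat G\preceq I$, which is what Lemma~\ref{lem51} requires.

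We need the $\mathcal N(G)$-component of $\bm e_k$ to stay frozen at $(\bm e_0)_{\mathcal N(G)}$. Take $\bm w\in\mathcal N(G)$. Then $0=\bm w^\top BG\bm w=\mathbb{E}\|P_k\bm w\|_B^2$, so $P_k\bm w=0$ almost surely. Since $P_k$ is $B$-self-adjoint, $\mathcal R(P_k)$ is $B$-orthogonal to $\mathcal N(P_k)\supseteq\mathcal N(G)$, giving $\mathcal R(P_k)\subseteq\mathcal R(G)$.

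Moreover, $B^{-1}\widetilde{A}^\top H_k\bm r=P_kB^{-1}\widetilde{A}^\top H_k\bm r$ by the identity of Step~2, so this term lies in $\mathcal R(P_k)\subseteq\mathcal R(G)$. Therefore the $\mathcal N(G)$-part of $\bm e_k$ equals that of $\bm e_{k-1}$, and by induction equals $(\bm e_0)_{\mathcal N(G)}$.

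\textbf{Step 4: contraction on $\mathcal R(G)$ and unrolling.} Write $\bm u_k=(\bm e_k)_{\mathcal R(G)}$. By $B$-orthogonality, $\|\bm e_k\|_B^2=\|\bm u_k\|_B^2+\|(\bm e_0)_{\mathcal N(G)}\|_B^2$. Also $\bm e^\top BG\bm e=\bm u^\top BG\bm u\ge\lambda_r\|\bm u\|_B^2$ by \eqref{b1}. The recursion of Step~2 then gives
\[
\mathbb{E}\|\bm u_k\|_B^2\le(1-\lambda_r)\,\mathbb{E}\|\bm u_{k-1}\|_B^2+\bm r^\top\mathbb{E}[H_k]\bm r .
\]
Unrolling this and bounding $\sum_j(1-\lambda_r)^j\le1/\lambda_r$, then adding back the frozen $\mathcal N(G)$-term, yields the claimed bound.
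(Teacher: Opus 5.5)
Your proposal is correct and follows essentially the paper's route: the same error recursion, the $B$-orthogonal split along $\mathcal{R}(G)\oplus\mathcal{N}(G)$ with the $\mathcal{N}(G)$-part frozen because $\mathcal{R}(P_k)\subseteq\mathcal{R}(G)$, contraction by $1-\lambda_r$ via \eqref{b1}, and the geometric-series bound. The differences are minor: you derive the projector identities from $M^\dagger MM^\dagger=M^\dagger$ instead of citing Gower--Richt\'arik, you compute the full one-step identity before discarding the frozen part rather than applying Pythagoras to $(\bm e_k)_{\mathcal{R}(G)}$, and you use only the (almost sure) inclusion $\mathcal{N}(G)\subseteq\mathcal{N}(P_k)$, which is all that is needed (the paper also asserts the reverse inclusion, which fails for a random $P_k$ in general but is never used).
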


\begin{proof}
Applying the sketch-and-project method \eqref{SP} to the doubly-noisy linear system \eqref{1.1}, we have
\begin{align}\label{51}
\bm {\tilde x}_{k}-\bm {x}_{LS}&=\bm{\tilde x}_{k-1}-\bm {x}_{LS}-B^{-1}\widetilde{A}^\top H_k\left(\widetilde{A}\bm{\tilde x}_{k-1}-\bm{b}\right) \nonumber\\
&=\left(I-B^{-1}\widetilde{A}^\top H_k \widetilde{A}\right)\left(\bm {\tilde x}_{k-1}-\bm {x}_{LS}\right)+B^{-1}\widetilde{A}^\top H_k(\bm{\tilde b}-\widetilde{A}\bm { x}_{LS}) \nonumber\\
&=\left(I-P_k\right)\left(\bm {\tilde x}_{k-1}-\bm {x}_{LS}\right)+B^{-1}\widetilde{A}^\top H_k(\bm{\tilde b}-\widetilde{A}\bm { x}_{LS}).
\end{align}
From \cite[Lemma 2.2]{Gower2015}, we have that $P_k$ projects orthogonally onto the subspace $\mathcal {R}(B^{-1}\widetilde{A}^\top S_k)$ and $I-P_k$ projects orthogonally onto the subspace $\mathcal {N}(S_k^\top \widetilde{A})$ with respect to the $B$-inner product. That is,
\begin{equation}\label{52}
\mathcal{R}(P_k)
=
\mathcal{R}(B^{-1}\widetilde{A}^{\top}S_k),
\qquad
\mathcal{R}(I-P_k)
=
\mathcal{N}(S_k^{\top}\widetilde{A}).
\end{equation}

%
First, we prove that $\mathcal {R}(G)$ and $\mathcal {N}(G)$ are orthogonal complement with respect to the $B$-inner product. 
Since $H_k^\top=H_k$ and $B^\top=B$, we have 
\begin{equation}\label{41}
P_k^\top B=B P_k=\widetilde{A}^\top H_k \widetilde{A}.
\end{equation}
Taking expectations yields $G^\top B=BG$. Thus, $G$ is self-adjoint with respect to the $B$-inner product, and $\widehat{G}=B^{\frac{1}{2}}GB^{-\frac{1}{2}}$ is symmetric. 
For any $\bm{v}\in\mathcal{R}(G)$, there exists
$\bm{y}\in\mathbb{R}^{n}$ such that $\bm{v}=G\bm{y}$. For
any $\bm{z}\in\mathcal{N}(G)$, we have
\[
\langle \bm{v},\bm{z}\rangle_B
=
\langle G\bm{y},\bm{z}\rangle_B
=
\langle \bm{y},G\bm{z}\rangle_B
=
0,
\]
which implies that $\mathcal{R}(G)$ and $\mathcal{N}(G)$ are orthogonal to each other with respect to the $B$-inner product. In addition, $\dim\bigl(\mathcal{R}(G)\bigr)+\dim\bigl(\mathcal{N}(G)\bigr)=n$, so $\mathcal {R}(G)$ and $\mathcal {N}(G)$ are orthogonal complement with respect to the $B$-inner product. 


Second, we show that the spaces $\mathcal {R}(P_k)$ and $\mathcal {N}(G)$ are orthogonal with respect to the $B$-inner product, and $\mathcal{R}(P_k)\subseteq \mathcal {R}(G)$.
We first prove that $\mathcal{N}(P_k)=\mathcal{N}(G)$. Indeed, on the one hand, for any $\bm{z}\in\mathcal{N}(G)$, we have 
$$\mathbb{E}\left\|P_k \bm{z}\right\|_B^2=\mathbb{E}\left[\bm{z}^\top P_k^\top B\bm{z}\right]=\mathbb{E}\left[\bm{z}^\top BP_k\bm{z}\right]=\bm{z}^\top BG\bm{z}=0$$
if and only if
$P_k \bm{z}=0$, and $\mathcal{N}(G)\subseteq\mathcal{N}(P_k)$. On the other hand, if $\bm{z}\in\mathcal{N}(P_k)$, then $G\bm{z}=\mathbb{E}\left[P_k \right]\bm{z}=0$, and $\mathcal{N}(P_k)\subseteq\mathcal{N}(G)$.

In addition, given $\bm{u}\in\mathcal{R}(P_k)$, there exists
$\bm{g}\in\mathbb{R}^{n}$ such that $\bm{u}=P_k\bm{g}$. For
any $\bm{z}\in\mathcal{N}(G)=\mathcal{N}(P_k)$, it follows from \eqref{41} that
\[\langle \bm{u},\bm{z}\rangle_B
=\langle P_k\bm{g},\bm{z}\rangle_B
=\langle \bm{g},P_k\bm{z}\rangle_B
=0.
\]
Thus, $\mathcal {R}(P_k)$ and $\mathcal {N}(G)$ are orthogonal to each other with respect to the $B$-inner product, and $\mathcal{R}(P_k)\subseteq \mathcal {R}(G)$.

Denote by $\bm{e}_k=\bm {\tilde x}_{k}-\bm {x}_{LS}$, it follows from \eqref{51} and \eqref{52} that 
\begin{equation}\label{b10}
\bm{e}_k=\bm{e}_{k-1}-P_k \bm{e}_{k-1}+B^{-1}\widetilde{A}^\top H_k(\bm{\tilde b}-\widetilde{A}\bm { x}_{LS}),
\end{equation}
where 
\begin{equation}\label{59}
-P_k \bm{e}_{k-1}+B^{-1}\widetilde{A}^\top H_k(\bm{\tilde b}-\widetilde{A}\bm { x}_{LS}) \in \mathcal {R}(P_k) \subseteq \mathcal {R}(G).
\end{equation}
Let $\bm{e} _k=(\bm{e}_k)_{\mathcal {R}(G)}+(\bm{e}_k)_{\mathcal {N}(G)}$
be the orthogonal decomposition with respect to the
$B$-inner product. From \eqref{b10}, \eqref{59} and the fact that $\mathcal {R}(G)$ and $\mathcal {N}(G)$ are orthogonal complement with respect to the $B$-inner product, we obtain
\begin{equation}\label{b11}
(\bm{e}_k)_{\mathcal {N}(G)}=(\bm{e}_{k-1})_{\mathcal {N}(G)}=\cdots=(\bm{e}_{0})_{\mathcal {N}(G)}.
\end{equation}
As a result,
\begin{equation}\label{b12}
\|\bm{e}_k\|_B^2=\|(\bm{e}_{k})_{\mathcal {R}(G)}\|_B^2+\|(\bm{e}_0)_{\mathcal {N}(G)}\|_B^2
\end{equation}
and
\begin{equation}\label{b13}
(\bm{e}_{k})_{\mathcal {R}(G)}=\left(I-P_k\right)(\bm{e}_{k-1})_{\mathcal {R}(G)}+B^{-1}\widetilde{A}^\top H_k(\bm{\tilde b}-\widetilde{A}\bm { x}_{LS}).
\end{equation}

Notice that
$B^{-1}\widetilde{A}^{\top}H_k \bigl(\widetilde{\bm{b}} -\widetilde{A}\bm{x}_{\mathrm{LS}}\bigr) \in \mathcal{R}(B^{-1}\widetilde{A}^{\top}S_k)
=\mathcal{R}(P_k)$, and $\mathcal{R}(P_k)$ and $\mathcal{R}(I-P_k)$ are orthogonal
with respect to the $B$-inner product,
the two terms on the right-hand side of \eqref{b13} are
$B$-orthogonal. Taking squared $B$-norm on both sides
of \eqref{b13}, we get
\begin{align}\label{b14}
\left\|(\bm{e}_k)_{\mathcal{R}(G)}\right\|_B^2=&\left\|(I-P_k)(\bm{e}_{k-1})_{\mathcal{R}(G)}\right\|_B^2
+\left\|B^{-1}\widetilde{A}^{\top}H_k\bigl(\widetilde{\bm{b}}-\widetilde{A}\bm{x}_{\mathrm{LS}}\bigr)\right\|_B^2\nonumber\\
&=\left\|(\bm{e}_{k-1})_{\mathcal{R}(G)}\right\|_B^2 -\left\|P_k\cdot(\bm{e}_{k-1})_{\mathcal{R}(G)}\right\|_B^2+\bm {r}^\top H_k\bm {r}\nonumber\\
&=\left\|(\bm{e}_{k-1})_{\mathcal{R}(G)}\right\|_B^2 -(\bm{e}_{k-1})_{\mathcal{R}(G)}BP_k(\bm{e}_{k-1})_{\mathcal{R}(G)} +\bm {r}^\top H_k\bm {r},
\end{align}
where $\bm{r}=\bm{\tilde b}-\widetilde{A}\bm { x}_{LS}$.
Taking expectations conditioned on $\bm{e}_{k-1}$ in \eqref{b14}, we get
\begin{align*}
\mathbb{E}_{k-1}\left\|(\bm{e}_k)_{\mathcal{R}(G)}\right\|_B^2 &=\left\|(\bm{e}_{k-1})_{\mathcal{R}(G)}\right\|_B^2 -(\bm{e}_{k-1})_{\mathcal{R}(G)}BG(\bm{e}_{k-1})_{\mathcal{R}(G)} +\bm {r}^\top \mathbb{E}[H_k]\bm {r}\\
& \leq (1-\lambda_r) \left\|(\bm{e}_{k-1})_{\mathcal{R}(G)}\right\|_B^2+\bm {r}^\top \mathbb{E}[H_k]\bm {r},
\end{align*}
where the last step is from \eqref{b1}. Taking expectation on both sides of the above inequality yields
\begin{align}\label{b15}
\mathbb{E}\left\|(\bm{e}_k)_{\mathcal{R}(G)}\right\|_B^2 
& \leq(1-\lambda_r) \mathbb{E}\left\|(\bm{e}_{k-1})_{\mathcal{R}(G)}\right\|_B^2+\bm {r}^\top \mathbb{E}[H_k]\bm {r}\nonumber\\
& \leq (1-\lambda_r)^k \mathbb{E}\left\|(\bm{e}_{0})_{\mathcal{R}(G)}\right\|_B^2+\!\sum_{j=0}^{k-1}(1\!-\!\lambda_r)^j \cdot\bm {r}^\top \mathbb{E}[H_k]\bm {r}\nonumber\\
& \leq (1-\lambda_r)^k \mathbb{E}\left\|(\bm{e}_{0})_{\mathcal{R}(G)}\right\|_B^2+\frac{\bm {r}^\top \mathbb{E}[H_k]\bm {r}}{\lambda_r}.
\end{align}
Combining \eqref{b12} and \eqref{b15}, we arrive at
\begin{align*}
\mathbb{E}\|\bm{e}_k\|_B^2&=\mathbb{E}\|(\bm{e}_{k})_{\mathcal {R}(G)}\|_B^2+\|(\bm{e}_0)_{\mathcal {N}(G)}\|_B^2\\
&\leq (1-\lambda_r)^k \mathbb{E}\left\|(\bm{e}_{0})_{\mathcal{R}(G)}\right\|_B^2+\frac{\bm {r}^\top \mathbb{E}[H_k]\bm {r}}{\lambda_r}+\|(\bm{e}_0)_{\mathcal {N}(G)}\|_B^2,
\end{align*}
which completes the proof.
\end{proof}
\begin{remark}
{\it Compared with Theorem \ref{thm3.5}, Theorem \ref{th51} completely eliminates the limitation arising from the convergence factor $\xi_{k-1}$. Moreover, no initial assumptions are imposed on Theorem \ref{th51}, and the result applies to both consistent and inconsistent systems with either full-rank or rank-deficient coefficient matrices.}
\end{remark}


Now, we are ready to analyze the behavior of $\|\mathbb{E}\left[\bm {\tilde x}_{k}-\bm{x}_{LS}\right]\|_2$ of the sketch-and-project method for solving the doubly noisy linear system \eqref{1.1}.

\begin{theorem}\label{th52}
Let $\bm {\tilde x}_{k}$ be the $k$-th iterate of the sketch-and-project method applied to the doubly noisy linear system \eqref{1.1}. Let $\bm {x}_{LS}=A^{\dagger}\bm b$ and $\bm {\tilde x}_{LS}=\widetilde{A}^{\dagger}\bm {\tilde b}$ be the least-squares solutions of the noiseless linear system \eqref{1.2} and the doubly noisy linear system \eqref{1.1}, respectively. Set $P_k=B^{-1}\widetilde{A}^\top H_k \widetilde{A}$ and $G=\mathbb{E}[P_k]$, where \scalebox{0.9}{$H_k=S_k\left(S_k^{\top}\widetilde{A}B^{-1}\widetilde{A}^{\top}S_k\right)^{\dagger}S_k^{\top}$}. For any initial point $\bm {\tilde x}_{0}\in \mathbb{R}^{n}$ and iteration number $k \in \mathbb{N}^+$, we have
\begin{align*}
\|\mathbb{E}\left[\bm {\tilde x}_{k}-\bm {x}_{LS}\right]\|_B\leq &(1-\lambda_r)^{k}\left\|(\bm {\tilde x}_0-\bm {x}_{LS})_{\mathcal {R}(G)}\right\|_B+\big\|(\bm {\tilde x}_0-\bm {x}_{LS})_{\mathcal {N}(G)}\big\|_B\\
&+\left[1-(1-\lambda_1)^k\right]\left\|(\bm {\tilde x}_{LS}-\bm {x}_{LS})_{\mathcal {R}(G)}\right\|_B+\frac{\|B^{-1}\widetilde{A}^\top \mathbb{E}[H_k] \bm {\tilde r}\|_B}{\lambda_r},\nonumber
\end{align*}
where $\lambda_r$ and $\lambda_1$ are the smallest nonzero eigenvalue and the largest one of the matrix $G$, respectively, and $\bm{\tilde r}=\bm{\tilde b}-\widetilde{A}\bm {\tilde x}_{LS}$.

\end{theorem}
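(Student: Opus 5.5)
Our approach combines the linear recursion for the mean error, as in Theorem~\ref{thm3.1}, with the $B$-orthogonal splitting $\mathcal{R}(G)\oplus\mathcal{N}(G)$ established in the proof of Theorem~\ref{th51}.

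\textbf{Step 1: error recursion.} Starting from \eqref{51}, write $\bm{e}_k=\bm{\tilde x}_k-\bm{x}_{LS}$ and $\bm r=\bm{\tilde b}-\widetilde A\bm x_{LS}$. Decompose the residual as
\[
\bm r=\bm{\tilde r}-\widetilde A(\bm x_{LS}-\bm{\tilde x}_{LS}).
\]
Since $B^{-1}\widetilde A^\top H_k\widetilde A=P_k$, this gives
\[
\bm e_k=(I-P_k)\bm e_{k-1}-P_k(\bm x_{LS}-\bm{\tilde x}_{LS})+B^{-1}\widetilde A^\top H_k\bm{\tilde r}.
\]

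\textbf{Step 2: take expectations and unroll.} The sketch $S_k$ is independent of $\bm e_{k-1}$ and i.i.d., so $\mathbb{E}[H_k]$ does not depend on $k$. Setting $\bm g=B^{-1}\widetilde A^\top\mathbb{E}[H_k]\bm{\tilde r}$, we get
\[
\mathbb{E}\bm e_k=(I-G)\mathbb{E}\bm e_{k-1}-G(\bm x_{LS}-\bm{\tilde x}_{LS})+\bm g.
\]
Unrolling as in Theorem~\ref{thm3.1}, and using $\sum_{j=0}^{k-1}(I-G)^jG=I-(I-G)^k$, yields
\[
\mathbb{E}\bm e_k=(I-G)^k\bm e_0+\bigl[I-(I-G)^k\bigr](\bm{\tilde x}_{LS}-\bm x_{LS})+\sum_{j=0}^{k-1}(I-G)^j\bm g.
\]

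\textbf{Step 3: bound each term in the $B$-norm.} We use the triangle inequality together with the $B$-orthogonal decomposition from the proof of Theorem~\ref{th51}.
\begin{itemize}
\item For the initial error, split $\bm e_0$ into its $\mathcal{R}(G)$ and $\mathcal{N}(G)$ components. On $\mathcal{N}(G)$ the operator $(I-G)^k$ acts as the identity. On $\mathcal{R}(G)$ we apply \eqref{b2} of Lemma~\ref{lem51}. This produces the first two terms of the bound.
\item For the second term, $I-(I-G)^k=\sum_j(I-G)^jG$ annihilates $\mathcal{N}(G)$, so only $(\bm{\tilde x}_{LS}-\bm x_{LS})_{\mathcal R(G)}$ survives. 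Applying \eqref{b3} gives the factor $1-(1-\lambda_1)^k$.
\item For the last term we use \eqref{b4}. This needs $\bm g\in\mathcal{R}(G)$.
\end{itemize}

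\textbf{Step 4: applicability of Lemma~\ref{lem51}.} The lemma is stated for $\widehat G$ with $O\preceq\widehat G\preceq I$ and $G=B^{-1/2}\widehat G B^{1/2}$. Here $G$ is $B$-self-adjoint, and $\widehat G=B^{1/2}GB^{-1/2}$ is a mean of $B$-orthogonal projections, hence lies between $O$ and $I$. In addition, $G\mathcal{R}(G)\subseteq\mathcal{R}(G)$, and $G$ and $\widehat G$ share eigenvalues.

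\textbf{Main obstacle: showing $\bm g\in\mathcal{R}(G)$.} For each realization, $B^{-1}\widetilde A^\top H_k\bm{\tilde r}\in\mathcal{R}(B^{-1}\widetilde A^\top S_k)=\mathcal{R}(P_k)$, and $\mathcal{R}(P_k)\subseteq\mathcal{R}(G)$ by the argument in Theorem~\ref{th51}. Since $\mathcal{R}(G)$ is a fixed closed subspace, the expectation $\bm g$ also lies in it.

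Alternatively, one could use the normal equations $\widetilde A^\top\bm{\tilde r}=0$, but $\mathbb{E}[H_k]$ sits between $\widetilde A^\top$ and $\bm{\tilde r}$, so that route does not directly kill the term. We therefore keep $\bm g$ and bound it via \eqref{b4}, which gives the final term $\|\bm g\|_B/\lambda_r$.
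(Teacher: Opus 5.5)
Your proposal is correct and follows the paper's proof essentially step for step. It uses the same residual splitting $\bm r=\bm{\tilde r}-\widetilde A(\bm x_{LS}-\bm{\tilde x}_{LS})$ to get \eqref{515}, the same unrolled identity \eqref{516}, and the same $B$-orthogonal splitting $\mathcal R(G)\oplus\mathcal N(G)$ followed by \eqref{b2}--\eqref{b4}, including the fact that $B^{-1}\widetilde A^\top\mathbb E[H_k]\bm{\tilde r}\in\mathcal R(G)$.

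One small caution on citing "the argument in Theorem~\ref{th51}" for $\mathcal R(P_k)\subseteq\mathcal R(G)$. You only need the valid inclusion $\mathcal N(G)\subseteq\mathcal N(P_k)$ (almost surely), which follows from $\mathbb E\|P_k\bm z\|_B^2=\bm z^\top BG\bm z=0$. The reverse inclusion $\mathcal N(P_k)\subseteq\mathcal N(G)$ claimed there is false in general (already for RK), so do not rely on it.
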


\begin{proof}
Applying the sketch-and-project method \eqref{SP} to the doubly-noisy linear system \eqref{1.1}, we have
\begin{align}\label{515}
\bm {\tilde x}_{k}-\bm {x}_{LS}=\left(I-P_k\right)\left(\bm {\tilde x}_{k-1}-\bm {x}_{LS}\right)+P_k(\bm {\tilde x}_{LS}-\bm {x}_{LS})+B^{-1}\widetilde{A}^\top H_k(\bm{\tilde b}-\widetilde{A}\bm {\tilde x}_{LS}).
\end{align}
From \cite[Lemma 2.2]{Gower2015}, it follows that $P_k$ projects orthogonally onto the subspace $\mathcal {R}(B^{-1}\widetilde{A}^\top S_k)$ and $I-P_k$ projects orthogonally onto the subspace $\mathcal {N}(S_k^\top \widetilde{A})$ with respect to the $B$-inner product. Let $\widehat{G}=B^{\frac{1}{2}}GB^{-\frac{1}{2}}$, we get from $O \preceq P_k \preceq I$ that 
$$
O \preceq G \preceq I \quad\text{and} \quad O \preceq \widehat{G} \preceq I.
$$ 
Denote by $\bm{e}_k=\bm {\tilde x}_{k}-\bm {x}_{LS}$ and by $\bm{d}=\bm {\tilde x}_{LS}-\bm {x}_{LS}$. 
Taking expectations conditioned on $\bm{\tilde x}_{k-1}$ in \eqref{515}, we get
$$
\mathbb{E}_{k-1}[\bm{e}_k]= (I-\mathbb{E}_{k-1}[P_k])\bm{e}_{k-1}+\mathbb{E}_{k-1}[P_k]\bm{d}+B^{-1}\widetilde{A}^\top \mathbb{E}_{k-1}[H_k](\bm{\tilde b}-\widetilde{A}\bm {\tilde x}_{LS}).
$$
Taking expectation again gets
\begin{align}\label{516}
\mathbb{E}[\bm{e}_k]&= (I-G)\mathbb{E}[\bm{e}_{k-1}]+G\bm{d}+B^{-1}\widetilde{A}^\top \mathbb{E}[H_k](\bm{\tilde b}-\widetilde{A}\bm {\tilde x}_{LS})\nonumber\\
&=(I-G)^k\bm{e}_{0}+\!\sum_{j=0}^{k-1}(I-G)^j G\bm{d}+\!\sum_{j=0}^{k-1}(I-G)^jB^{-1}\widetilde{A}^\top \mathbb{E}[H_k](\bm{\tilde b}-\widetilde{A}\bm {\tilde x}_{LS})\nonumber\\
&=(I-G)^k\bm{e}_{0}+\left[I-(I-G)^k\right]\bm{d}+\!\sum_{j=0}^{k-1}(I-G)^jB^{-1}\widetilde{A}^\top \mathbb{E}[H_k](\bm{\tilde b}-\widetilde{A}\bm {\tilde x}_{LS}).
\end{align}

Let $\bm{e}_0=(\bm{e}_0)_{\mathcal {R}(G)}+(\bm{e}_0)_{\mathcal {N}(G)}$
and
$\bm{d}=\bm{d}_{\mathcal {R}(G)}+\bm{d}_{\mathcal {N}(G)}$ be the orthogonal decompositions of $\bm{e}_0$ and $\bm{d}$,
respectively, with respect to the $B$-inner product. Let $\bm{\tilde r}=\bm{\tilde b}-\widetilde{A}\bm {\tilde x}_{LS}$. Taking the
$B$-norm on both sides of \eqref{516}, we obtain
\begin{align*}
\left\|\mathbb{E}[\bm{e}_k]\right\|_B&=\left\|(I-G)^k\bm{e}_{0}+\left[I-(I-G)^k\right]\bm{d}+\!\sum_{j=0}^{k-1}(I-G)^jB^{-1}\widetilde{A}^\top \mathbb{E}[H_k] \bm{\tilde r}\right\|_B\\
& \leq \left\|(I-G)^k\bm{e}_{0}\right\|_B+\left\|\left[I-(I-G)^k\right]\bm{d}\right\|_B+\left\|\!\sum_{j=0}^{k-1}(I-G)^jB^{-1}\widetilde{A}^\top \mathbb{E}[H_k] \bm{\tilde r}\right\|_B\\
&\leq \left\|(I-G)^k(\bm{e}_0)_{\mathcal {R}(G)}\right\|_B+\left\|(\bm{e}_0)_{\mathcal {N}(G)}\right\|_B+\left\|\left[I-(I-G)^k\right]\bm{d}_{\mathcal {R}(G)}\right\|_B\\
&\quad+\left\|\!\sum_{j=0}^{k-1}(I-G)^jB^{-1}\widetilde{A}^\top \mathbb{E}[H_k] \bm{\tilde r}\right\|_B\\
&\leq (1-\lambda_r)^{k}\left\|(\bm {\tilde x}_0-\bm {x}_{LS})_{\mathcal {R}(G)}\right\|_B+\big\|(\bm {\tilde x}_0-\bm {x}_{LS})_{\mathcal {N}(G)}\big\|_B\\
&\quad+\left[1-(1-\lambda_1)^k\right]\left\|(\bm {\tilde x}_{LS}-\bm {x}_{LS})_{\mathcal {R}(G)}\right\|_B+\frac{\|B^{-1}\widetilde{A}^\top \mathbb{E}[H_k] \bm {\tilde r}\|_B}{\lambda_r},
\end{align*}
where the last step follows from \eqref{b2}--\eqref{b4} and the fact that $B^{-1}\widetilde{A}^\top \mathbb{E}[H_k] \bm {\tilde r} \in\mathcal{R}(G)$.
\end{proof}

\begin{remark}
{\it 
Two remarks are in order. First, Theorems~\ref{th51} and~\ref{th52}
show that the sketch-and-project method applied to the doubly noisy
linear system \eqref{1.1} converges to a ball centered at
$\bm{x}_{\mathrm{LS}}$ without imposing any additional restrictions
on the initialization. Theorem~\ref{th51} establishes an upper bound
for
$\mathbb{E}\!\left[
\left\|\widetilde{\bm{x}}_k-\bm{x}_{\mathrm{LS}}\right\|_B^2
\right]
$
and shows that its convergence rate is governed by the smallest
nonzero eigenvalue $\lambda_r$ of $G$. In contrast,
Theorem~\ref{th52} establishes an upper bound for
$
\left\|
\mathbb{E}\!\left[
\widetilde{\bm{x}}_k-\bm{x}_{\mathrm{LS}}
\right]
\right\|_B
$
and shows that, during the early iterations, its convergence behavior
is influenced not only by $\lambda_r$ but also by the largest
eigenvalue $\lambda_1$ of $G$. However, as the iteration count $k$
increases, the asymptotic convergence rates associated with both
bounds are governed solely by $\lambda_r$.
Second, the convergence horizon in Theorem~\ref{th51} depends on
$\|\left(\widetilde{\bm{x}}_0-\bm{x}_{\mathrm{LS}}\right)_{\mathcal{N}(G)}\|_B^2$, 
$\lambda_r$,
and
$\bm{r}^{\top}\mathbb{E}[H_k]\bm{r}$.
By comparison, the convergence horizon in Theorem~\ref{th52}
depends on
$\lambda_r$,
$\left\|B^{-1}\widetilde{A}^{\top}\mathbb{E}[H_k]\widetilde{\bm{r}}\right\|_B$,
$\left\|\left(\widetilde{\bm{x}}_0-\bm{x}_{\mathrm{LS}}\right)_{\mathcal{N}(G)}\right\|_B$,
and
$\left\|\left(\widetilde{\bm{x}}_{\mathrm{LS}}-\bm{x}_{\mathrm{LS}}\right)_{\mathcal{R}(G)}\right\|_B$.
}
\end{remark}

\begin{remark}
{\it Theorems \ref{th51} and \ref{th52} apply to both the full column rank and rank deficient cases. Note that RK and RBK can be viewed as special cases of the sketch-and-project method. Therefore, Theorem \ref{th51} overcomes the limitation of Theorem \ref{thm3.5} and yields a convergence rate that is independent of $\xi_{k-1}$. Moreover, by employing the analytical technique developed in Theorem \ref{th52}, the full row rank assumption on $\widetilde{A}_{\tau}$ imposed in Theorems \ref{thm3.3} and \ref{thm3.4} can be removed. Taken together, these results analyze, from different perspectives, the factors affecting the upper bound on
$\left\|\mathbb{E}\!\left[\bm{x}_k-\bm{x}_{\mathrm{LS}}\right]\right\|_2$ for RBK and RDBK solving \eqref{1.1}.

}

\end{remark}

\section{Numerical experiments}\label{sec5}
In this section, we present numerical results on convergence behavior of the RK \cite{Strohmer-2009}, REK \cite{Zouzias-2013}, RBK \cite{Need-2014}, and RDBK \cite{Need-2015} methods on doubly noisy linear systems. Numerical experiments are conducted to validate our theoretical findings. In all experiments, we assume that the underlying noiseless system is consistent. In addition, the coefficient matrix $A$ and measurement vector $\bm b$ are contaminated by additive noise, namely, $\widetilde{A}=A+E$ and $\bm {\tilde{b}}=\bm b+\bm f$.

\textbf{Construction of consistent linear systems.} Given the dimension $(m,n)$ and the rank $r=\mathrm{rank}(A)=\min\{m,n\}$, the coefficient matrix $A$ is generated from both real-world and synthetic data. The real-world data are obtained from the University of Florida Sparse Matrix Collection\footnote{https://sparse.tamu.edu/}. Table \ref*{test} summarizes the detailed information about these real-world datasets.
The synthetic data are generated in two different types as follows.

\textit{Type I:} For the experiments involving the RK and REK algorithms, the coefficient matrix is constructed as
$$A=UDV^\top,$$ where $U \in \mathbb{R}^{ m\times r}$ and $V \in \mathbb{R}^{n\times r}$. The entries of $U$ and $V$ are generated from the standard Gaussian distribution, and their columns are orthonormalized.
$$U ={\tt randn(m, r)},~~U ={\tt orth(U)},$$
$$V ={\tt randn(n, r)},~~V ={\tt orth(V)}.$$
The matrix $D\in \mathbb{R}^{ r\times r}$ is a diagonal matrix whose diagonal entries are distributed over the interval $[\sigma_{\min}(A),\sigma_{\max}(A)]$.
$$D={\tt diag(linspace(\sigma_{\min}(A), \sigma_{\max}(A), r))}.$$

\textit{Type II:} For the experiments involving the RBK and RDBK algorithms, the entries of the coefficient matrix $A$ are generated from the standard Gaussian distribution:
$$A ={\tt randn(m, n)}.$$
In order to construct a consistent linear system, we choose $\bm b \in \mathcal{R}(A)$.

	\begin{table}[ht]
	\centering
	\caption{\it Detailed information about the test matrices}
	\label{test}
	\begin{tabular}{|l|c|c|c|}
		\hline
		Matrix & $m \times n$ & \ Nonzeros & Background \\
		\hline
		abtaha1 & 14\,596 $\times$ 209 & 51\,307 & Combinatorial problem \\ \hline
		abtaha2 & 37\,932 $\times$ 331 & 137\,228 & Combinatorial problem \\ \hline
		$bibd\_17\_8^\top$ & 24\,310 $\times$ 136 & 680\,680 & Combinatorial problem \\ \hline
		airfoil1 & 8\,034 $\times$ 8\,034 & 23\,626 & 2-D problem \\ \hline
		photogrammetry2 & 4\,472 $\times$ 936 & 37\,056 & Computer vision problem   \\ \hline
		well1850 & 1\,850 $\times$ 712 & 8\,755 & Least squares problem  \\ \hline
		illc1033 & 1\,033 $\times$ 320 & 4\,719 & Least squares problem  \\ \hline
		cari & 400 $\times$ 1\,200 & 152\,800 & Linear programming problem  \\ \hline
		$lp\_ maros\_r7$ & 3\,136 $\times$ 9\,408 & 144\,848 & Linear programming problem \\ \hline
		nemsafm & 334$\times$ 2\,348 & 2\,826 & Linear programming problem  \\ \hline
		df2177 & 630$\times$ 10\,358 & 22\,336 & Linear programming problem  \\ \hline
	\end{tabular}
\end{table}

\textbf{Construction of doubly noisy linear systems.} Given the noise magnitudes $\delta_A$ and $\delta_b$, we construct the noisy data as $\widetilde{A}=A+E$ and $\bm {\tilde{b}}=\bm b+\bm f$, where $E$ and $\bm f$ are additive noises with entries generated from a standard normal distribution. Moreover, the relative noise levels satisfy $\left\| E\right\|_F / \left\|A\right\|_F=\delta_A$ and $\left\|\bm f\right\|_2 / \left\|\bm b\right\|_2=\delta_b$:
$$
E = \mathtt{randn(m, n)},~~E = \mathtt{\delta_A*norm(A,\textquotesingle fro\textquotesingle)*E/norm(E,\textquotesingle fro\textquotesingle)},
$$
$$
\bm f = \mathtt{randn(m,1)},~~~\bm f = \mathtt{\delta_b*norm(\bm{b},2)*\bm {f}/norm(\bm f,2)}.
$$

In all experiments, the results are averaged over 10 independent trials. We set $\sigma_{\min}(A)=5$, $\sigma_{\max}(A)=50$, and the initial value $\bm{\tilde{x}}_0=\bm{0}$. For the block methods, the row index set $\{1,2,\ldots,m\}$ is partitioned into 10, 20, 30, and 40 blocks, denoted by $\{\tau_1, \tau_2,\ldots,\tau_p\}$ with $p=10,~20,~30,~40$. Similarly, the column index set $\{1,2,\ldots,n\}$ is also partitioned into 10, 20, 30, and 40 blocks, denoted by $\{\nu_1, \nu_2,\ldots,\nu_q\}$ with $q=10,~20,~30,~40$.

\subsection{Constraints on initial condition of RK for solving doubly noisy linear systems}\label{sec5.1}
In this subsection, we first illustrate that the initial condition required in Theorem \ref{thm2.3} is restrictive. Then, a numerical experiment shows that case $(iii)$ in Theorem \ref{thm3.5} has a very low probability of occurring.

Figure \ref{fig1} (left) plots the values $\sin\angle(\bm {\tilde x}_0- \bm x_{LS},\mathcal{R}(\tilde{A}^\top))$ for the RK algorithm on doubly noisy linear systems, where the coefficient matrix is $\tilde{A}=A+E$ with a Type I matrix $A=UDV^\top$ ($m=3000, n=5000, r=3000$). The matrices $A$ and $E$ are randomly generated 100 times, resulting in 100 distinct noisy matrices $\widetilde{A}$. If the initial condition in Theorem \ref{thm2.3} satisfies $\bm {\tilde x}_0- \bm {x}_{LS} \in \mathcal{R}(\widetilde{A}^{\top})$, then $\sin\angle(\bm {\tilde x}_0- \bm {x}_{LS},\mathcal{R}(\tilde{A}^\top))=0$. It is observed that the values $\sin\angle(\bm {\tilde x}_0- \bm {x}_{LS},\mathcal{R}(\tilde{A}^\top))>0$ in all 100 experiments. Therefore, the initial assumption $\bm {\tilde x}_0- \bm {x}_{LS} \in \mathcal{R}(\widetilde{A}^{\top})$ is relatively restrictive. This motivates our analysis in Theorems \ref{thm3.5} and \ref{thm3.1}.

Note that as long as case $(ii)$ in Theorem \ref{thm3.5} (i.e., $\bm{e}_{k-1} \in \mathrm {null}(\widetilde{A})$) does not occur, the RK algorithm converges to a ball centered at $\bm {x}_{LS}$ for solving the doubly noisy linear system. Figure \ref{fig1} (right) shows that case $(iii)$ in Theorem \ref{thm3.5} occurs with extremely low probability. The noisy coefficient matrix is $\tilde{A}=A+E$, where $A$ is generated from the real-world datasets including \texttt{abtaha1}, \texttt{abtaha2}, \texttt{airfoil1} and $\texttt{bibd\_17\_8}^\top$.
Figure \ref{fig1} (right) reports the corresponding values $\|\tilde{A}(\bm {\tilde{x}}_k-\bm {x}_{LS})\|_2$ from the real-world datasets across iterations. We can see that all computed values satisfy $\|\tilde{A}(\bm {\tilde{x}}_k-\bm {x}_{LS})\|_2 \neq 0$, and thus $\bm {\tilde{x}}_k-\bm {x}_{LS}$ does not belong to $\mathcal{N}(\widetilde{A})$ at every iteration. This observation further supports our theoretical results stated in Remark \ref{re3.3}.


\begin{figure}[htbp]
	\centering
	\includegraphics[width=6cm,height=4.5cm]{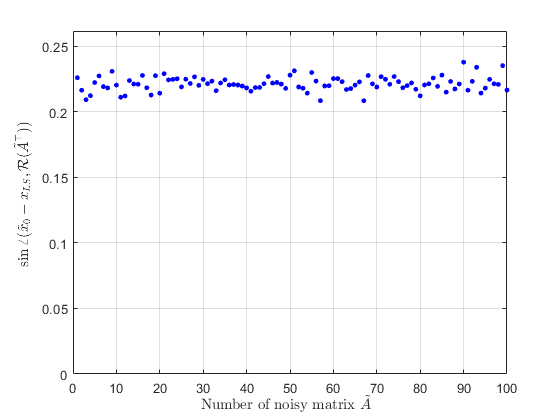}
    \includegraphics[width=6cm,height=4.5cm]{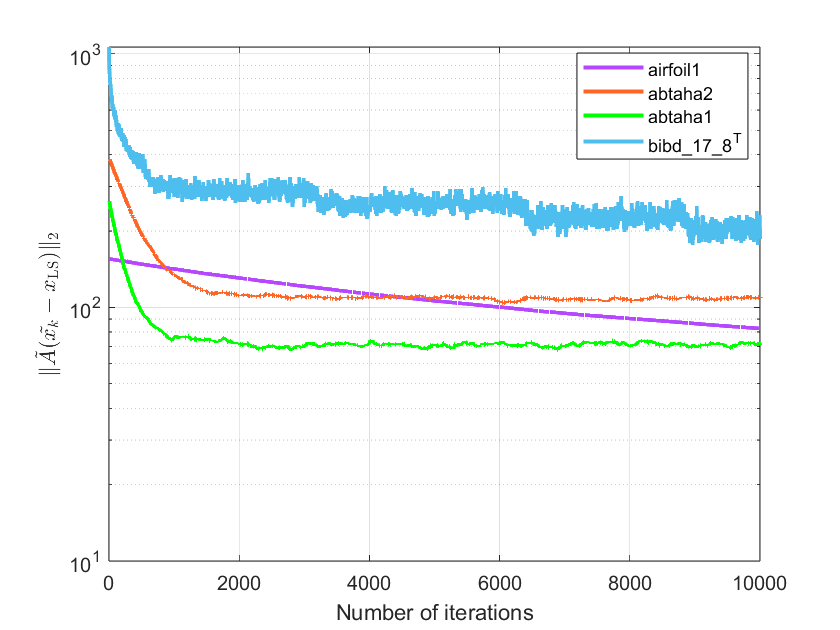}
	\caption{Example \ref{sec5.1}: Left: The values $\sin\angle(\bm {\tilde{x}}_0-\bm {x}_{LS},\mathcal{R}(\tilde{A}^\top))$ for RK on doubly noisy linear systems. The noisy coefficient matrix is $\tilde{A}=A+E$ with the Type I matrix $A=UDV^\top$ $(m=3000,~n=5000,~r=3000)$. Right: The error values $\|\tilde{A}(\bm {\tilde{x}}_k-\bm {x}_{LS})\|_2$ of RK on doubly noisy linear systems. The noisy coefficient matrix is $\tilde{A}=A+E$ with the real-world data matrix $A$ including \texttt{abtaha1}, \texttt{abtaha2}, \texttt{airfoil1} and $\texttt{bibd\_17\_8}^\top$.}\label{fig1}
\end{figure}


\subsection{Comparison of theoretical bounds of RK for doubly noisy linear systems}\label{sec5.2}

In this section, we compare the theoretical bounds provided by Theorems \ref{thm2.3} and \ref{thm3.1} for the RK algorithm on the doubly noisy linear system, generated from real-world data and synthetic data. The numerical results demonstrate that the bound given by Theorem \ref{thm3.1} is sharper than the square root of the bound from Theorem \ref{thm2.3}.

In this experiment, we set the noise magnitudes as $\delta_A=0.1$ and $\delta_b=0.1$. Figures \ref{fig3} and \ref{fig8} compare the bound of Theorem \ref{thm3.1} and the square root of the bound of Theorem \ref{thm2.3}, alongside the errors $\mathbb{E}\|\bm {\tilde{x}}_k-\bm {x}_{LS}\|_2$ and $\|\mathbb{E}[\bm {\tilde{x}}_k-\bm {x}_{LS}]\|_2$. Figure \ref{fig3} presents the corresponding results for the RK algorithm applied to the doubly noisy linear system with coefficient matrices $\tilde{A}=A+E$, where $A$ is generated from real-world datasets including \texttt {abtaha2}, \texttt {photogrammetry2}, \texttt {illc1850} and \texttt {well1850}. Figure \ref{fig8} presents these results for RK applied to the doubly noisy linear system with coefficient matrices $\tilde{A}=A+E$, where $A$ is generated using Type I matrices ($A=UDV^\top$ with $m=2000, ~5000,\ n=5000, ~3000$).
From Figures \ref{fig3} and \ref{fig8}, we observe that the bound given by Theorem \ref{thm3.1} is sharper than the square root of the bound of Theorem \ref{thm2.3} when characterizing the errors $\mathbb{E}\|\bm {\tilde{x}}_k-\bm {x}_{LS}\|_2$ and $\|\mathbb{E}[\bm {\tilde{x}}_k-\bm {x}_{LS}]\|_2$ across iterations for different datasets. In addition, the bound of Theorem \ref{thm3.1} first increases and then decreases after reaching its maximum value. This is because of the joint influence of the terms $\alpha^k=(1-\tfrac{\widetilde{\sigma}_{\min}^2}{\|\widetilde{A}\|^2_F})^k$ and  $\beta=1-\left(1-\tfrac{\widetilde{\sigma}_{\max}^2}{\|\widetilde{A}\|^2_F}\right)^k$
in the theoretical bound of Theorem \ref{thm3.1}. At the early stages of iterations, the increment of the term $\beta$ exceeds the decrement of the term $\alpha^k$. As the iteration number $k$ increases, the term $\beta$ reaches its extremum earlier than the term $\alpha^k$. Thus, the theoretical bound becomes tighter and more effective when the iteration number $k$ is sufficiently large.

\begin{figure}[htbp]
	\centering
	\includegraphics[width=6cm,height=4.5cm]{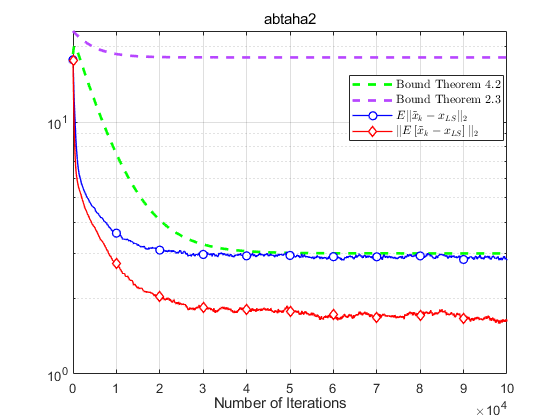}
	\includegraphics[width=6cm,height=4.5cm]{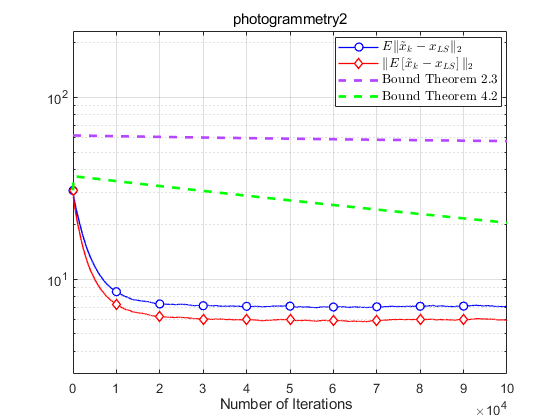}
	\includegraphics[width=6cm,height=4.5cm]{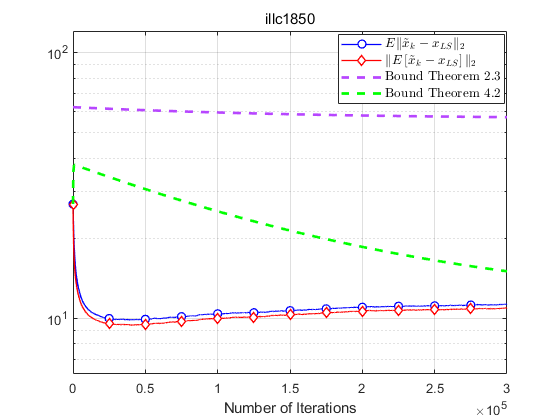}
	\includegraphics[width=6cm,height=4.5cm]{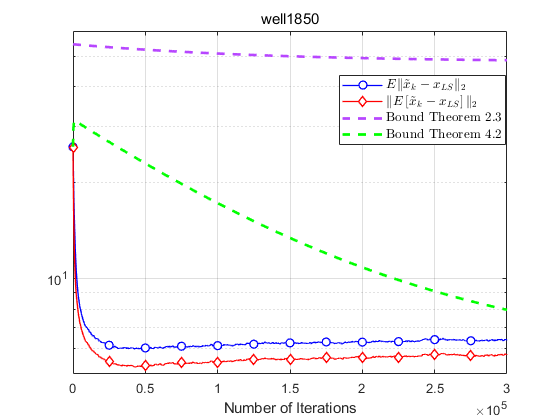}
	\caption{Example \ref{sec5.2}: The errors $\mathbb{E}\|\bm {\tilde{x}}_k-\bm {x}_{LS}\|_2$ and $\|\mathbb{E}[\bm {\tilde{x}}_k-\bm {x}_{LS}]\|_2$, square root of bound of Theorem \ref{thm2.3}, and bound of Theorem \ref{thm3.1} for RK on doubly noisy linear systems. First row, left to right: \texttt{abtaha2}, \texttt{photogrammetry2}; second row, left to right: \texttt{illc1850}, and \texttt{well1850} datasets.}\label{fig3}

\end{figure}

\begin{figure}[!htbp]
	\centering
	\includegraphics[width=6cm,height=4.5cm]{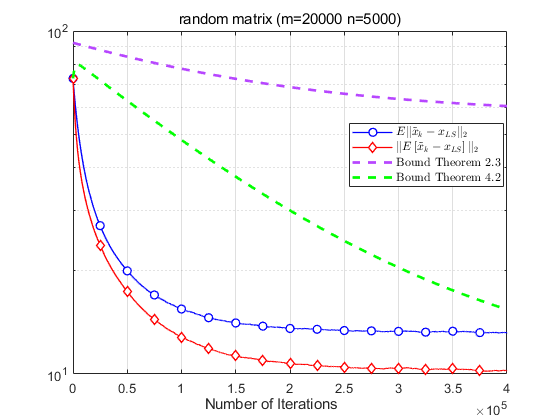}
	\includegraphics[width=6cm,height=4.5cm]{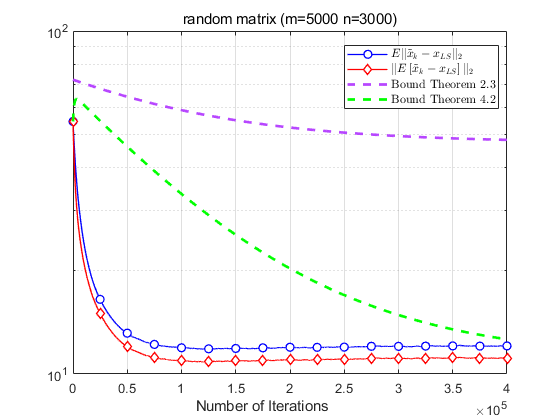}
	\caption{Example \ref{sec5.2}: The errors $\mathbb{E}\|\bm {\tilde{x}}_k-\bm {x}_{LS}\|_2$ and $\|\mathbb{E}[\bm {\tilde{x}}_k-\bm {x}_{LS}]\|_2$, square root of bound of Theorem \ref{thm2.3}, and bound of Theorem \ref{thm3.1} for RK on doubly noisy linear systems. The noisy coefficient matrix is $\tilde{A}=A+E$ with Type I matrix $A=UDV^\top$ ($m=2000, ~5000,\ n=5000, ~3000,$ and $r=2000, 3000$).}\label{fig8}
\end{figure}

\subsection{Tests on the theoretical bound of REK for solving doubly noisy linear systems on different datasets}\label{sec5.3}

In this subsection, we experimentally verify that the theoretical bound in Theorem \ref{thm3.2} is valid and tight for the REK algorithm solving doubly noisy linear systems.

In this experiment, we set the noise magnitudes as $\delta_A=0.05$ and $\delta_b=0.05$. Figures \ref{fig4} and \ref{fig5} illustrate the error curves $\mathbb{E}\|\bm {\tilde{x}}_k-\bm {x}_{LS}\|_2$ and $\|\mathbb{E}[\bm {\tilde{x}}_k-\bm {x}_{LS}]\|_2$ across iterations, together with the theoretical bound of Theorem \ref{thm3.2} for different datasets. Figure \ref{fig4} plots these numerical results of REK on doubly noisy linear systems with coefficient matrices $\tilde{A}=A+E$, where $A$ is generated from real-world datasets including \texttt {cari}, \texttt {df2177}, \texttt {lpmaros}, and \texttt {nemsafm}. Figure \ref{fig5} plots these results for REK applied to doubly noisy linear systems with coefficient matrices $\tilde{A}=A+E$, where $A$ is generated using Type I matrices($A=UDV^\top$ with $m=2000, ~3000,\ n=3000, ~1000$).
From Figures \ref{fig4} and \ref{fig5}, we observe that the theoretical bound in Theorem \ref{thm3.2} is sharp for different datasets when the iteration number $k$ is sufficiently large. We further notice that at the beginning of the iteration process, the theoretical bound increases rapidly due to the effect of the term $k\alpha^{k}$ in Theorem \ref{thm3.2}. However, when the iteration number $k$ is sufficiently large, this does not affect the fact that the theoretical bound remains tight for characterizing the error quantities $\mathbb{E}\|\bm {\tilde{x}}_k-\bm {x}_{LS}\|_2$ and $\|\mathbb{E}[\bm {\tilde{x}}_k-\bm {x}_{LS}]\|_2$ across iterations.


\begin{figure}[htbp]
	\centering
	\includegraphics[width=6cm,height=4.8cm]{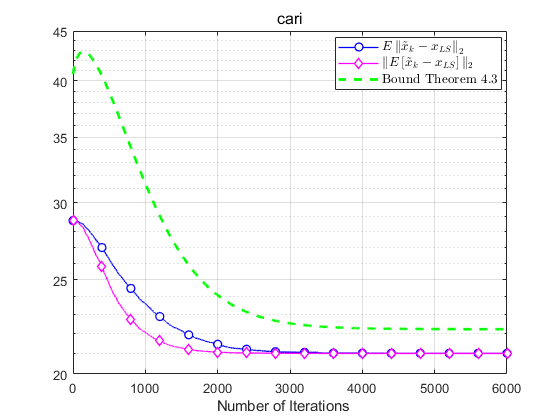}
	\includegraphics[width=6cm,height=4.8cm]{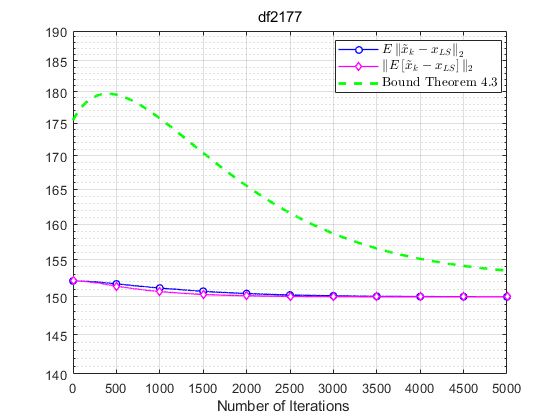}
	\includegraphics[width=6cm,height=4.8cm]{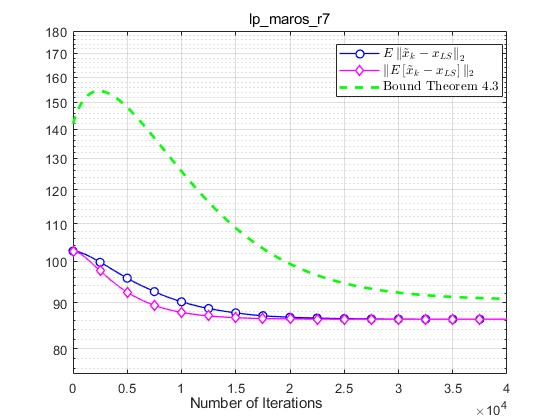}
	\includegraphics[width=6cm,height=4.8cm]{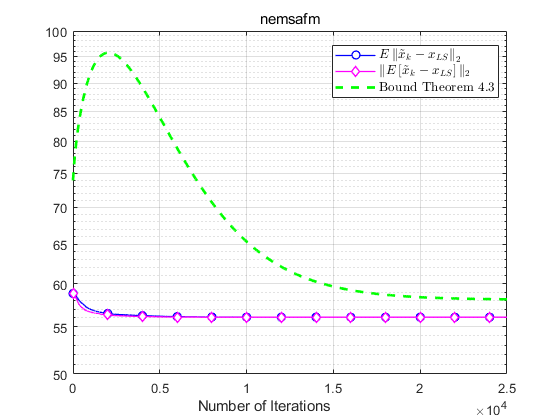}
	\caption{Example \ref{sec5.3}: The errors $\mathbb{E}\|\bm {\tilde{x}}_k-\bm {x}_{LS}\|_2$ and $\|\mathbb{E}[\bm {\tilde{x}}_k-\bm {x}_{LS}]\|_2$, and bound of Theorem \ref{thm3.2} for REK on doubly noisy linear systems. First row, left to right: \texttt{cari}, \texttt{df2177}; second row, left to right: \texttt{lp\_ maros\_r7}, and \texttt{nemsafm} datasets.}\label{fig4}
\end{figure}

\begin{figure}[!htbp]
	\centering
	\includegraphics[width=6cm,height=4.8cm]{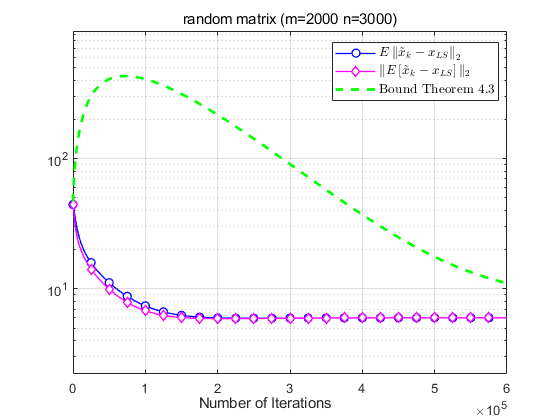}
	\includegraphics[width=6cm,height=4.8cm]{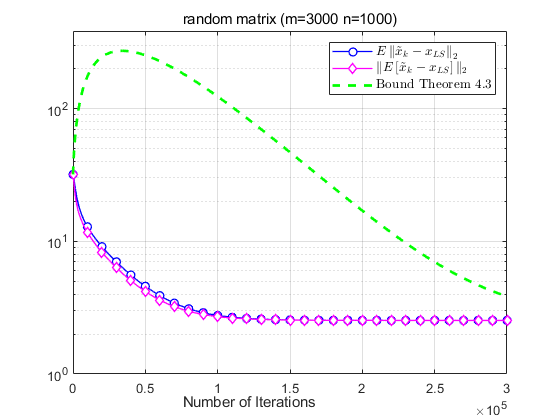}
	\caption{Example \ref{sec5.3}: The errors $\mathbb{E}\|\bm {\tilde{x}}_k-\bm {x}_{LS}\|_2$ and $\|\mathbb{E}[\bm {\tilde{x}}_k-\bm {x}_{LS}]\|_2$, and bound of Theorem \ref{thm3.2} for REK on doubly noisy linear systems. The noisy coefficient matrix is $\tilde{A}=A+E$ with Type I matrix $A=UDV^\top$ ($m=2000, ~3000,\ n=3000, ~1000,$ and $r=2000, 1000$).}\label{fig5}
\end{figure}

\subsection{The influence of row partition on the theoretical bounds of RBK and RDBK}\label{sec5.4}
In this experiment, we investigate the influence of the row partition number $p$ on the theoretical bounds of Theorems \ref{thm3.3} and \ref{thm3.4}. Numerical results demonstrate that the smaller the row partition number $p$, the sharper the theoretical bounds from Theorems \ref{thm3.3} and \ref{thm3.4}.

In this experiment, we set the noise magnitudes as $\delta_A=0.01$ and $\delta_b=0.01$. The row partition number $p$ is set to $10, 20, 30,$ and $40$. Figure \ref{fig6} shows the error curves $\mathbb{E}\|\bm {\tilde{x}}_k-\bm {x}_{LS}\|_2$ and $\|\mathbb{E}[\bm {\tilde{x}}_k-\bm {x}_{LS}]\|_2$ across iterations, together with the theoretical bound of Theorem \ref{thm3.3} for RBK solving the doubly noisy linear system. The noisy coefficient matrix is $\tilde{A}=A+E$, where $A$ is a Type II matrix generated as $A=\mathtt{randn(m, n)}$ with $m=8000$ and $n=2000$. Figure \ref{fig7} presents the error curves $\mathbb{E}\|\bm {\tilde{x}}_k-\bm {x}_{LS}\|_2$ and $\|\mathbb{E}[\bm {\tilde{x}}_k-\bm {x}_{LS}]\|_2$, alongside the theoretical bound of Theorem \ref{thm3.4} for RDBK on the doubly noisy linear system, where $A$ is a Type II matrix generated as $A=\mathtt{randn(m, n)}$ with $m=20000$ and $n=5000$. The numerical results in Figures \ref{fig6} and \ref{fig7} show that increasing the row partition number $p$ enlarges the convergence horizon and makes the theoretical bounds less sharp. Meanwhile, the theoretical bounds converge more slowly. This phenomenon can be explained by the fact that, as $p$ decreases, each block contains more rows, thereby potentially increasing the dimension of the row space of each row submatrix $\widetilde A_{\tau_i}$. Consequently, the projection matrix
$V_{r_i}V_{r_i}^{\top}$
spans a larger subspace and captures more directions. Therefore, the quantity $\eta=\frac{1}{\|\tilde{A} \|_F}\lambda_{\min}(\sum_{i=1}^{p}\|\widetilde{A}_{\tau_i}\|_FV_{r_i}V_{r_i}^\top)$ increases, which yields sharper theoretical bounds in Theorems \ref{thm3.3} and \ref{thm3.4} and faster convergence.

\begin{figure}[htbp]
\centering

\begin{subfigure}{0.48\textwidth}
\centering
\includegraphics[width=\linewidth,height=4.8cm]{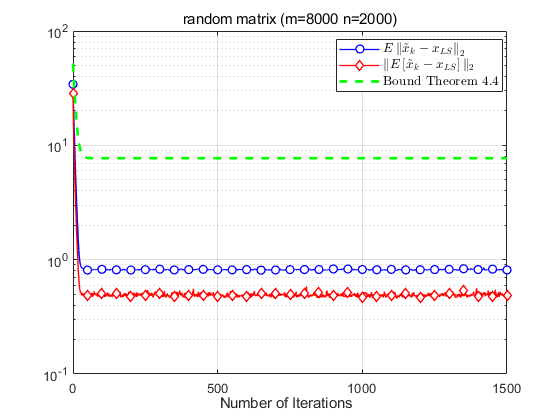}
\caption{$p=10$}
\end{subfigure}
\begin{subfigure}{0.48\textwidth}
\centering
\includegraphics[width=\linewidth,height=4.8cm]{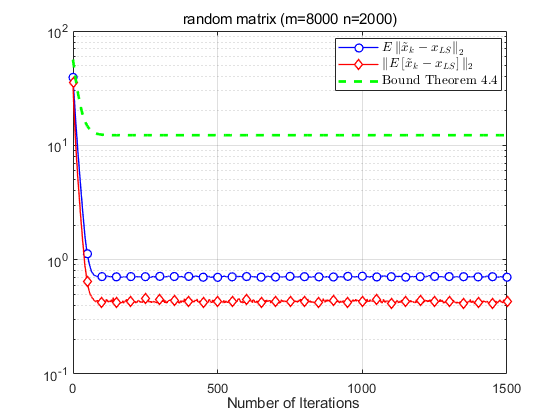}
\caption{$p=20$}
\end{subfigure}

\vspace{0.2cm}

\begin{subfigure}{0.48\textwidth}
\centering
\includegraphics[width=\linewidth,height=4.8cm]{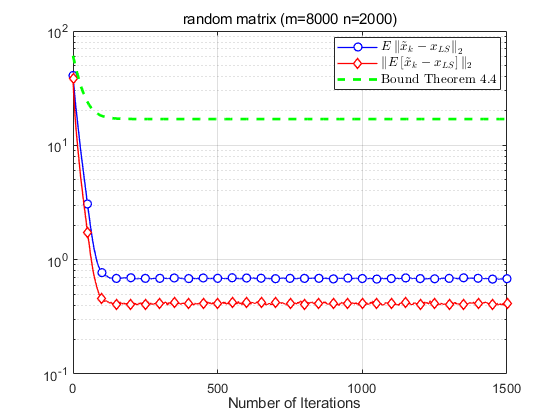}
\caption{$p=30$}
\end{subfigure}
\begin{subfigure}{0.48\textwidth}
\centering
\includegraphics[width=\linewidth,height=4.8cm]{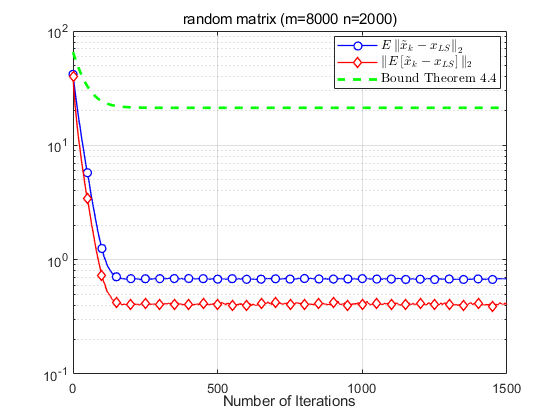}
\caption{$p=40$}
\end{subfigure}

\caption{Example \ref{sec5.4}: The errors $\mathbb{E}\|\bm {\tilde{x}}_k-\bm {x}_{LS}\|_2$ and $\|\mathbb{E}[\bm {\tilde{x}}_k-\bm {x}_{LS}]\|_2$, and bound of Theorem \ref{thm3.3} for RBK on doubly noisy linear systems. The noisy coefficient matrix is $\tilde{A}=A+E$ with Type II matrix $A=\mathtt{randn(m, n)}$ ($m=8000,~n=2000$). First row, left to right: row partition numbers $p=10,~20$; second row, left to right: row partition numbers  $p=30,~40$.}\label{fig6}

\end{figure}

\begin{figure}[htbp]
\centering

\begin{subfigure}{0.48\textwidth}
\centering
\includegraphics[width=\linewidth,height=4.8cm]{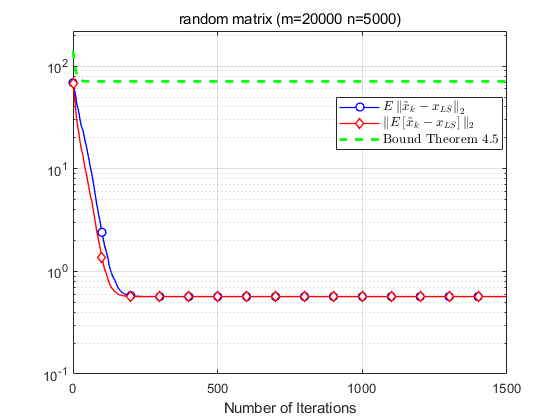}
\caption{$p=10$}
\end{subfigure}
\begin{subfigure}{0.48\textwidth}
\centering
\includegraphics[width=\linewidth,height=4.8cm]{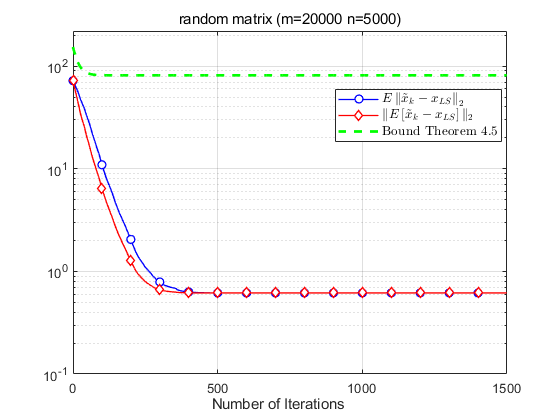}
\caption{$p=20$}
\end{subfigure}

\vspace{0.2cm}

\begin{subfigure}{0.48\textwidth}
\centering
\includegraphics[width=\linewidth,height=4.8cm]{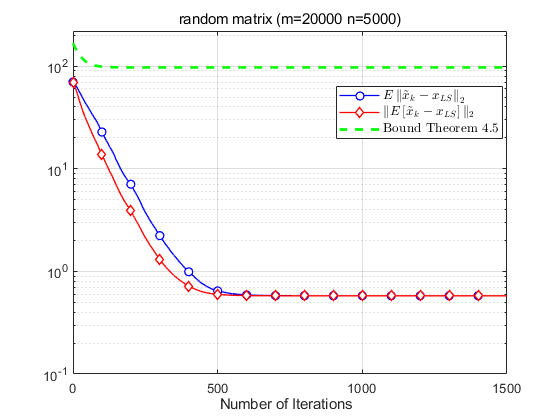}
\caption{$p=30$}
\end{subfigure}
\begin{subfigure}{0.48\textwidth}
\centering
\includegraphics[width=\linewidth,height=4.8cm]{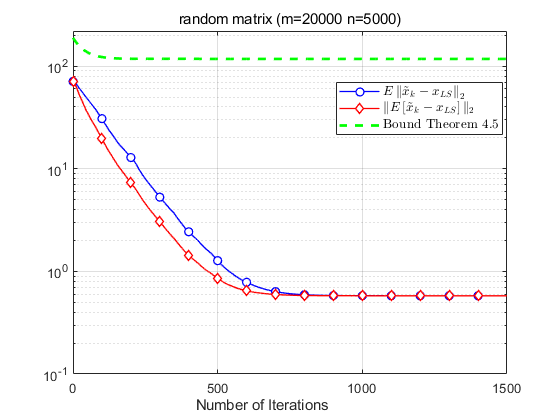}
\caption{$p=40$}
\end{subfigure}

\caption{Example \ref{sec5.4}: The errors $\mathbb{E}\|\bm {\tilde{x}}_k-\bm {x}_{LS}\|_2$ and $\|\mathbb{E}[\bm {\tilde{x}}_k-\bm {x}_{LS}]\|_2$, and bound of Theorem \ref{thm3.4} for RDBK on doubly noisy linear systems. The noisy coefficient matrix is $\tilde{A}=A+E$ with Type II matrix $A=\mathtt{randn(m, n)}$ ($m=20000,~n=5000$). First row, left to right: row partition numbers $p=10,~20$; second row, left to right: row partition numbers  $p=30,~40$.}\label{fig7}

\end{figure}

\section*{Acknowledgments}
This work is supported by the National Natural Science Foundation of China under Grants 12671452 and 12271518. We are grateful to Professors Ning Zheng and Junfeng Yin for helpful discussions. 

\bibliographystyle{siamplain}
\bibliography{references}
\end{document}